\newcolumntype{L}[1]{>{\raggedright\let\newline\\\arraybackslash\hspace{0pt}}m{#1}}
\newcolumntype{C}[1]{>{\centering\let\newline\\\arraybackslash\hspace{0pt}}m{#1}}
\newcolumntype{R}[1]{>{\raggedleft\let\newline\\\arraybackslash\hspace{0pt}}m{#1}}
\newtheoremstyle{theoremstyle}
  {10pt}      
  {5pt}       
  {\itshape}  
  {}          
  {\bfseries} 
  {}         
  {\newline}      
  {}          
\newtheoremstyle{algorithmstyle}
  {10pt}      
  {5pt}       
  {}  
  {}          
  {\bfseries} 
  {}         
  { }      
  {}          
\newtheoremstyle{examplestyle}
  {10pt}      
  {5pt}       
  {}          
  {}          
  {\bfseries} 
  {}         
  {\newline}      
  {}          
\theoremstyle{theoremstyle}
\newtheorem{theorem}{Theorem}[section]
\newtheorem{lemma}[theorem]{Lemma}
\newtheorem{proposition}[theorem]{Proposition}
\newtheorem{corollary}[theorem]{Corollary}
\theoremstyle{examplestyle}
\newtheorem{example}[theorem]{Example}
\newtheorem{definition}[theorem]{Definition}
\newtheorem{remark}[theorem]{Remark}
\newtheorem{convention}[theorem]{Convention}
\theoremstyle{algorithmstyle}
\newtheorem{algorithm}[theorem]{Algorithm}
\newcommand{\NN }{\mathbb{N}}
\newcommand{\CC }{\mathbb{C}}
\newcommand{\RR }{\mathbb{R}}
\newcommand{\QQ }{\mathbb{Q}}
\newcommand{\ZZ }{\mathbb{Z}}
\newcommand{\FF }{\mathbb{F}}
\newcommand{\ov}{\overline}
\newcommand{\Rt}{{R\llbracket t\rrbracket}}
\newcommand{\Rtx}{{R\llbracket t\rrbracket [x]}}
\newcommand{\suchthat}{\;\ifnum\currentgrouptype=16 \middle\fi|\;}
\newcommand{\bigmid}{\left.\vphantom{\Big\{} \suchthat \vphantom{\Big\}}\right.}
\newcommand{\pinitial}[2]{\overline{\initial_{#1}({#2})|}_{t=1}}
\DeclareMathOperator{\val}{val}
\DeclareMathOperator{\Mon}{Mon}
\DeclareMathOperator{\lm}{LM}
\DeclareMathOperator{\lt}{LT}
\DeclareMathOperator{\initial}{in}
\DeclareMathOperator{\Relint}{Relint}
\DeclareMathOperator{\Cone}{Cone}
\DeclareMathOperator{\Conv}{Conv}
\DeclareMathOperator{\Flip}{Flip}
\DeclareMathOperator{\Trop}{\mathcal T}
\DeclareMathOperator{\relint}{relint}
\DeclareMathOperator{\Grass}{Grass}
\DeclareMathOperator{\Lin}{Lin}
\newcommand{\lang}[1]{}
\begin{document}

   \parindent0cm

   \title[Computing tropical varieties]{Computing tropical varieties\\over fields with valuation}
   \author{Thomas Markwig}
   \address{Eberhard Karls Universit\"at T\"ubingen\\
     Fachbereich Mathematik\\
     Auf der Morgenstelle 10\\
     72076 Tübingen, Germany
     }
   \email{keilen@math.uni-tuebingen.de}
   \urladdr{https://www.math.uni-tuebingen.de/\textasciitilde keilen}

   \author{Yue Ren}
   \address{Ben-Gurion University of the Negev\\
     Department of Mathematics\\
     P.O.B. 653\\
     8410501 Be'er Sheva, Israel
     }
   \email{reny@post.bgu.ac.il}
   \urladdr{https://www.math.bgu.ac.il/\textasciitilde reny}

   \thanks{The author was supported by ...}

   \subjclass{Primary 14T05, 13P10, 13F25, 16W60; Secondary 12J25, 16W60}

   \date{December, 2016}

   \keywords{Tropical varieties, valued fields}

   \begin{abstract}
     We show how tropical varieties of ideals $I\unlhd K[x]$ over a
     field $K$ with non-trivial valuation can always be traced back to
     tropical varieties of ideals $\pi^{-1}I\unlhd\Rtx$ over some
     dense subring $R$ in its ring of integers. Moreover, for
     homogeneous ideals, we present algorithms on how the latter can
     be computed in finite time, provided that $\pi^{-1}I$ is
     generated by elements in $R[t,x]$. While doing so, we also comment
     on the computation of the Gr\"obner polytope structure and $p$-adic
     Gr\"obner bases using our framework. 
   \end{abstract}

   \maketitle

   \section{Introduction}

   Tropical varieties are commonly described as combinatorial shadows of their algebraic counterparts, and computing tropical varieties is an algorithmically highly challenging task, requiring sophisticated techniques from computer algebra and convex geometry.

   The first techniques were developed by Bogart, Jensen, Speyer, Surmfels and Thomas \cite{BJSST07}, who focused on homogeneous ideals over $\CC$ with the trivial valuation, which allowed them to rely on classical Gr\"obner basis methods. Furthermore, the authors showed that, under sensible conditions, their techniques can be used over the field of Puiseux series $\mathbb C\{\!\{t\}\!\}$ with its natural valuation, by regarding $t$ as a variable in the polynomial ring instead of a uniformizing parameter in the coefficient ring. The inhomogeneity of the resulting ideal in $\CC[t,x]$ can be worked around through homogenization and dehomogenization.
   In order to adapt these techniques to the field of $p$-adic numbers and the $p$-adic valuation, Chan and Maclagan adapted the classical theory of Gr\"obner bases \cite{CM13} to take the valuation on the ground field into account, instead of solely relying on monomial orderings. 

   In this article, in Section~\ref{sec:tropicalVariety}, we discuss another approach to compute tropical varieties over an arbitrary field with valuation, which can be regarded as a generalisation of the trick used for $\mathbb C\{\!\{t\}\!\}$.
   For that, we combine the existing notions of tropical varieties over power series \cite{Touda05,BT07,PS13} with the concept of tropical varieties over coefficient rings \cite[Section 1.6]{MS15}.
   Compared to \cite{CM13}, the approach relies on existing standard basis theory, which not only allows us to exploit the highly optimized implementations that exist in many established computer algebra systems such as \textsc{Singular} \cite{DGPS16} or \textsc{Macaulay2} \cite{M2}, it also gives us access to a highly active field of research.

   Moreover, in Section~\ref{sec:groebnerComplex}, we improve on the techniques in \cite{BJSST07} by avoiding homogenization and dehomogenization. We also touch upon the topic on how to compute $p$-adic Gr\"obner bases in our framework.
   In Section~\ref{sec:computation} we present the algorithms for
   computing tropical varieties and in Section~\ref{sec:optimization}
   we touch upon possible optimizations that are exclusive to
   non-trivial valuations.

   All algorithms in this article are
   implemented in the \textsc{Singular} library \texttt{tropical.lib}
   \cite{tropicallib}, relying on the \textsc{gfanlib} interface
   \texttt{gfan.lib} \cite{gfan,gfanlib} for computations in convex
   geometry. They are publicly available as part of the official
   \textsc{Singular} distribution.

   \renewcommand{\emph}[1]{\textit{\textcolor{red}{#1}}}
   \section{Tracing tropical varieties to a trivial valuation}\label{sec:tropicalVariety}

   The aim of this section is to show how tropical varieties over valued fields can be traced back to tropical varieties over integral power series. The linchpin of the section is to show that initial ideals over valued fields can be described through initial ideals over integral power series, the remaining results then follow naturally from this. To fix the notation, we will begin by recalling some very basic notions in tropical geometry that are of immediate relevance to us.

   \begin{convention}\label{con:coefficients}
     For the remainder of the article, fix a complete field $K$ with non-trivial discrete valuation $\nu:K\rightarrow\RR\cup\{\infty\}$ and a uniformizing parameter $p\in K$. Let $\mathcal O_K$ be its ring of integers and let $\mathfrak K$ denote its residue field.
     Let $R\leq \mathcal O_K$ be a dense, noetherian subring. By Cohen's Structure Theorem, we have two exact sequences
     \begin{center}
       \begin{tikzpicture}[descr/.style={fill=white,inner sep=2pt}]
         \matrix (m) [matrix of math nodes, row sep=2em,
         column sep=2.5em, text height=1.5ex, text depth=0.25ex]
         { 0 & \phantom{{}_{\langle p-t \rangle}}\langle p-t \rangle\cdot R\llbracket t\rrbracket_{\langle p-t \rangle}
           & \phantom{{}_{\langle p-t \rangle}}R\llbracket t\rrbracket_{\langle p-t \rangle}
           & K & 0, \\
           0 & \langle p-t \rangle\cdot R\llbracket t\rrbracket & R\llbracket t\rrbracket
           & \mathcal O_K & 0. \\ };
         \path[->,font=\scriptsize,shorten >=-1.9em]
         (m-1-1) edge (m-1-2)
         (m-1-2) edge (m-1-3);
         \path[->,font=\scriptsize]
         (m-1-3) edge (m-1-4)
         (m-1-4) edge (m-1-5)
         (m-2-1) edge (m-2-2)
         (m-2-2) edge (m-2-3)
         (m-2-3) edge node[below] {$t\longmapsto p$} node[above] {$\pi$} (m-2-4)
         (m-2-4) edge (m-2-5)
         (m-2-2) edge (m-1-2)
         (m-2-3) edge (m-1-3)
         (m-2-4) edge (m-1-4); 
       \end{tikzpicture}
     \end{center}
     Moreover, fix a multivariate polynomial ring $K[x]=K[x_1,\ldots,x_n]$. By abuse of notation, we will also use
     $\pi$ to refer to both the map $\Rtx\rightarrow \mathcal O_K[x]$ as well as
     the composition $\Rtx\rightarrow \mathcal O_K[x] \hookrightarrow K[x]$.
   \end{convention}

    \begin{example}[$p$-adic numbers]
      The most important example is the field $K:=\QQ_p$ of $p$-adic
      numbers with $\mathcal O_K:=\ZZ_p$ the
      ring of $p$-adic integers. Then $R:=\ZZ\leq\ZZ_p$ is a natural dense
      subring, which is computationally easy to work over. The exact
      sequences in Convention~\ref{con:coefficients} merely reflect
      the presentation of $p$-adic integers as power series in $p$:
      \begin{center}
        \begin{tikzpicture}[descr/.style={fill=white,inner sep=2pt}]
          \matrix (m) [matrix of math nodes, row sep=1.5em,
          column sep=2.5em, text height=1.5ex, text depth=0.25ex]
          { 0 & \langle p-t \rangle\cdot \ZZ\llbracket t\rrbracket_{\langle p-t \rangle} [x]
            & \ZZ\llbracket t\rrbracket_{\langle p-t \rangle} [x]
            & \QQ_p[x] & 0, \\
            0 & \langle p-t \rangle\cdot \ZZ\llbracket t\rrbracket[x] & \ZZ\llbracket t\rrbracket[x] & \ZZ_p[x] & 0. \\ };
          \path[->,font=\scriptsize]
          (m-1-1) edge (m-1-2)
          (m-1-2) edge (m-1-3)
          (m-1-3) edge (m-1-4)
          (m-1-4) edge (m-1-5)
          (m-2-1) edge (m-2-2)
          (m-2-2) edge (m-2-3)
          (m-2-3) edge node[below] {$t\longmapsto p$} node[above] {$\pi$} (m-2-4)
          (m-2-4) edge (m-2-5)
          (m-2-2) edge (m-1-2)
          (m-2-3) edge (m-1-3)
          (m-2-4) edge (m-1-4); 
       \end{tikzpicture}
     \end{center}
   \end{example}

     \begin{example}
       Given the choice of $R\leq \mathcal O_K$ in Convention \ref{con:coefficients}, choosing $R:=\mathcal O_K$ is always possible.
       However, in many examples there are natural choices for $R$, which are computationally much easier to handle than $\mathcal O_K$ itself:
       \begin{enumerate}[leftmargin=*]
       \item $K=k(\!(t)\!)$ the field of Laurent series over a field
         $k$ with $\mathcal O_K=k\llbracket t \rrbracket$ the ring of
         power series over $k$, $R=k[t]$ and $p=t$; e.g.~$k=\FF_q$ with $q$ a prime power, as used in \cite[Section 7]{SpeyerSturmfels04} or \cite{Kalinin13},
         or $k=\QQ$ as considered in \cite{BJSST07}, see Example~\ref{ex:puiseuxSeries}.
       \item Finite extensions $K$ of $\QQ_p$ and $\FF_q(\!(t)\!)$,
         i.e. all local fields with non-trivial valuation, and also
         all higher dimensional local fields.
       \item $\mathcal O_K$ any completion of a localization of a Dedekind domain
         $R$ at a prime ideal $P\unlhd R$, $p\in P$ a suitable
         element. Note that $p$ does not need to generate $P$ and
         hence $\mathcal{O}_K$ need not be the completion with respect to the
         ideal generated by $p$,
         e.g. $R=\ZZ[\sqrt{-5}]$, $P=\langle 2,1+\sqrt{-5}\rangle$ and
         $p=2$.
       \item For an odd choice of $R$, consider $K:=\QQ(s)(\!(t)\!)$ so that $\mathcal O_K=\QQ(s)\llbracket t\rrbracket$. Set $R:=S^{-1}\QQ[s,t]$, where $S:=\QQ[s,t]\setminus (\langle
         t-1,s\rangle\cup\langle x\rangle)$ is multiplicatively closed as the complement of two prime ideals. Then $R$ is a non-catenarian, dense subring of $\mathcal O_K$.
       \end{enumerate}
     \end{example}

   \begin{definition}[initial forms, initial ideals, tropical varieties over valued fields]\label{def:valuedInitial}
     For a polynomial $f=\sum_{\alpha\in\NN^n} c_\alpha\cdot
     x^\alpha\in K[x]$ and a weight vector $w\in\RR^n$, we define
     the \emph{initial form} of $f$ with respect to $w$ to be:
     \begin{displaymath}
       \initial_{\nu,w}(f) := \sum_{\substack{w\cdot \alpha-\nu(c_\alpha) \\ \text{maximal}}}
       \ov{c_\alpha \cdot p^{-\nu(c_\alpha)}}\cdot x^\alpha \in \mathfrak{K}[x].
     \end{displaymath}
     For any subset $I\subseteq K[x]$ and a weight vector
     $w\in\RR^n$, we define the \emph{initial ideal} of I with
     respect to $w$ to be:
     \begin{displaymath}
       \initial_{\nu,w}(I) := \langle \initial_{\nu,w}(f)\mid f\in I\rangle \unlhd \mathfrak{K}[x].
     \end{displaymath}
     We refer to the set of weight vectors for which the initial ideal contains no monomial as the \emph{tropical variety} of $I$,
     \begin{displaymath}
       \Trop_{\hspace{-0.1cm}\nu}(I) := \left\{w\in\RR^n \suchthat \initial_{\nu,w}(I) \text{ monomial free}\right\}.
     \end{displaymath}
   \end{definition}

   \begin{theorem}[{\cite[Theorem 3.3.5]{MS15}}]
     Let $I\unlhd K[x]$ define an irreducible subvariety in $(K^\ast)^n$ of dimension $d$. Then $\Trop_{\hspace{-0.1cm}\nu}(I)$ is the support of a pure polyhedral complex of same dimension that is connected in codimension $1$.
   \end{theorem}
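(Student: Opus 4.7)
The plan is to establish the three assertions---polyhedral complex structure, purity of dimension, and connectivity in codimension one---as three separate steps, each drawing on a different piece of tropical machinery, since none of them can be deduced cheaply from the others.

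First, I would build the polyhedral complex structure via a Gr\"obner complex on $\RR^n$: declare $w\sim w'$ whenever $\initial_{\nu,w}(I)=\initial_{\nu,w'}(I)$. Using the existence of a finite universal Gr\"obner basis over the valued field (as in \cite{CM13}, or alternatively via the correspondence with power series over $R$ developed in this section), only finitely many equivalence classes occur, the closure of each is a rational polyhedron, and the closures glue to a polyhedral complex $\Sigma$ covering $\RR^n$. The tropical variety is then the subcomplex consisting of those cells whose associated initial ideal contains no monomial.

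Second, to verify purity of dimension $d$, I would appeal to the Fundamental Theorem of Tropical Geometry (Kapranov's Theorem), which identifies $\Trop_\nu(I)$ with the Euclidean closure of the coordinatewise valuation image of $V(I)\cap(\overline K^\ast)^n$. Combined with the Bieri--Groves theorem, this image is a finite union of rational polyhedra of dimension $\dim V(I)=d$. The polyhedral complex structure from step one then forces every maximal cell of $\Trop_\nu(I)$ to have dimension exactly $d$.

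Third, connectivity in codimension one is the genuinely hard step, and it cannot be read off the polyhedral structure directly. The route I would take is a generic projection argument: choose a linear projection $\varphi\colon\RR^n\to\RR^d$ in sufficiently generic position so that $\varphi(\Trop_\nu(I))=\RR^d$ and generic fibres are finite. One then shows that two maximal cells lying over the same generic point can be connected through cells of codimension at most one by deforming the base point across a codimension-one stratum in $\RR^d$ and tracking how the fibre changes. The main obstacle is controlling this fibre behaviour, which is precisely where the irreducibility hypothesis on $I$ enters decisively: it is needed, via the balancing condition on the initial ideals along codimension-one walls, to rule out that the dual graph of maximal cells splits into two components. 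Since the argument is carried out in detail in \cite[Theorem 3.3.5]{MS15}, I would simply cite that reference rather than reproduce the full combinatorial bookkeeping here.
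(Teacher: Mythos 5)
The paper does not prove this statement at all; it simply cites it as \cite[Theorem 3.3.5]{MS15} and uses it as a black box. Your proposal sketches the three-part structure of the argument from that reference (Gr\"obner complex for the polyhedral structure, Kapranov plus Bieri--Groves for purity, a generic-projection and balancing argument for connectivity in codimension one, with irreducibility entering decisively in the last step) and then, correctly, defers to the same citation for the details --- so you take essentially the same route as the paper.
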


   Next, we will introduce tropical varieties in $\Rtx$, and show how
   a certain class of them relates to tropical varieties in $K[x]$. In
   particular, we will note that those tropical varieties in $\Rtx$ are
   pure and connected in codimension $1$. We begin by introducing
   initial forms and initial ideals in $\Rtx$ and show how they can be
   used to describe their pendants in $K[x]$.

   \begin{definition}[initial forms, initial ideals]\label{def:nonValuedInitial}
     Given an element $f=\sum_{\beta,\alpha} c_{\alpha,\beta}\cdot
     t^\beta x^\alpha\unlhd \Rtx$ and a weight vector
     $w\in\RR_{<0}\times\RR^n$, we define the \emph{initial form} of $f$
     with respect to $w$ to be
     \begin{displaymath}
       \initial_{w}(f) := \sum_{w\cdot (\beta,\alpha) \text{ maximal}}
       c_\alpha t^\beta x^\alpha \in R[t,x].
     \end{displaymath}
     Given an ideal $I\unlhd \Rtx$ and a weight vector
     $w\in\RR_{<0}\times\RR^n$, we define the \emph{initial ideal} of
     $I$ with respect to $w$ to be:
     \begin{displaymath}
       \initial_{w}(J) := \langle \initial_{w}(f)\mid f\in J\rangle \unlhd R[t,x].
     \end{displaymath}
     This can be thought of as a natural extension of Definition \ref{def:valuedInitial} with trivial valuation on the coefficients.
     Note that we only allow weight vectors with negative weight in $t$, so that our result lies in a polynomial ring.
   \end{definition}

    \begin{example}[$p$-adic numbers]\label{ex:Chan}
     Let us consider the example in \cite[Chapter 3.6]{Chan13}, the ideal
     \begin{displaymath}
       I=\langle 2x_1^2+3x_1x_2+24x_3x_4, 8x_1^3+x_2x_3x_4+18x_3^2x_4\rangle \unlhd\QQ_3[x_1,\ldots,x_4]
     \end{displaymath}
     over the $3$-adic number $Q_3$, so that
     \begin{displaymath}
       \pi^{-1}I=\langle 3-t,2x_1^2+3x_1x_2+24x_3x_4, 8x_1^3+x_2x_3x_4+18x_3^2x_4\rangle\unlhd\ZZ\llbracket t\rrbracket[x],
     \end{displaymath}
     and the weight vector $(-1,w)\in\RR_{<0}\times\RR^4$, $w:=(1,11,3,19)$.
     A short computation yields
     \begin{displaymath}
       \initial_{(-1,w)}(\pi^{-1}I) = \langle 3, x_1^2, tx_1x_3x_4,t^3x_1x_2^2x_3, t^4x_1x_2^4, t^3x_3^4x_4^2 \rangle,
     \end{displaymath}
     and the similarity to the initial ideal of $I$ under the $3$-adic valuation is no coincidence:
     \begin{displaymath}
       \initial_{\nu_3,w}(I) = \langle x_1^2, x_1x_3x_4,x_1x_2^2x_3, x_1x_2^4, x_3^4x_4^2 \rangle \unlhd\FF_3[x].
     \end{displaymath}
   \end{example}

   \begin{proposition}\label{prop:initial}
     For any ideal $I\unlhd \mathcal O_K[x]$ and any weight vector $w\in\RR^n$, we have:
     \begin{displaymath}
       \pinitial{(-1,w)}{\pi^{-1}I}=\initial_{\nu,w}(I),
     \end{displaymath}
     where $\ov{(\cdot)}$ denotes the canonical projection $\ov{(\cdot)}:R[x]\rightarrow \mathfrak{K}[x]$.
   \end{proposition}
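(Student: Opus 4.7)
The plan is to prove both inclusions by studying how the operator $\overline{\initial_{(-1,w)}(\,\cdot\,)|_{t=1}}$ on $\Rtx$ interacts with $\pi$ and with the valuation $\nu$. Write a typical $f \in \Rtx$ as $f = \sum_{\alpha,\beta} c_{\alpha,\beta}\, t^\beta x^\alpha$ with $c_{\alpha,\beta}\in R$, and for each multiindex $\alpha$ appearing in $f$ let $\beta_\alpha^\ast := \min\{\beta : c_{\alpha,\beta} \neq 0\}$. Because $f$ is polynomial in $x$ and the $t$-weight $-1$ is strictly negative, $M := \max_\alpha(w\cdot\alpha - \beta_\alpha^\ast)$ is finite and attained, giving
\begin{displaymath}
  \initial_{(-1,w)}(f) = \sum_{\alpha \,:\, w\cdot\alpha - \beta_\alpha^\ast = M} c_{\alpha,\beta_\alpha^\ast}\, t^{\beta_\alpha^\ast} x^\alpha,
\end{displaymath}
so $\overline{\initial_{(-1,w)}(f)|_{t=1}} = \sum_\alpha \overline{c_{\alpha,\beta_\alpha^\ast}}\, x^\alpha$ in $\mathfrak K[x]$.

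The crucial step is the following observation: if $\alpha$ lies in the support of this reduction, i.e.\ $\overline{c_{\alpha,\beta_\alpha^\ast}}\neq 0$ (equivalently $\nu(c_{\alpha,\beta_\alpha^\ast})=0$), then the $x^\alpha$-coefficient $c_\alpha := \sum_\beta c_{\alpha,\beta} p^\beta \in \mathcal O_K$ of $\pi(f)$ satisfies $\nu(c_\alpha) = \beta_\alpha^\ast$ (the leading $p$-adic digit has valuation $\beta_\alpha^\ast$ while every other term has strictly larger valuation, so no cancellation arises) and $\overline{c_\alpha p^{-\beta_\alpha^\ast}} = \overline{c_{\alpha,\beta_\alpha^\ast}}$. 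A short calculation using the universal lower bound $\nu(c_\alpha) \geq \beta_\alpha^\ast$ then shows that the support of $\initial_{\nu,w}(\pi(f))$ coincides exactly with the set of $\alpha$ contributing nontrivially to $\overline{\initial_{(-1,w)}(f)|_{t=1}}$, with matching coefficients. Consequently either $\overline{\initial_{(-1,w)}(f)|_{t=1}} = 0$ or it equals $\initial_{\nu,w}(\pi(f))$; either way it lies in $\initial_{\nu,w}(I)$ whenever $f \in \pi^{-1}I$, proving ``$\subseteq$''.

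For the reverse inclusion I would lift every generator of $\initial_{\nu,w}(I)$. Given $g = \sum c_\alpha x^\alpha \in I$, using the exact sequence in Convention~\ref{con:coefficients} together with the density of $R$ in $\mathcal O_K$, I lift each $c_\alpha\in\mathcal O_K$ to $\tilde c_\alpha = \sum_{\beta \geq \nu(c_\alpha)} d_{\alpha,\beta}\, t^\beta \in \Rt$ with $\pi(\tilde c_\alpha) = c_\alpha$, $\nu(d_{\alpha,\nu(c_\alpha)}) = 0$, and $\overline{d_{\alpha,\nu(c_\alpha)}} = \overline{c_\alpha p^{-\nu(c_\alpha)}}$. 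Setting $f := \sum \tilde c_\alpha x^\alpha \in \pi^{-1}I$, a direct computation following the first step gives $\overline{\initial_{(-1,w)}(f)|_{t=1}} = \initial_{\nu,w}(g)$, and hence ``$\supseteq$''.

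The main obstacle is controlling cancellations in the sum $c_\alpha = \sum_\beta c_{\alpha,\beta} p^\beta$ defining the $x^\alpha$-coefficient of $\pi(f)$: a priori, $c_{\alpha,\beta_\alpha^\ast}$ having positive $\nu$-valuation could let higher-order terms raise $\nu(c_\alpha)$ above $\beta_\alpha^\ast$ and shift which indices $\alpha$ lie in the $\initial_{\nu,w}$-support. The argument sidesteps this precisely because any such inflation forces $\overline{c_{\alpha,\beta_\alpha^\ast}} = 0$, which simultaneously removes $\alpha$ from the left-hand side, keeping the two supports and their coefficients perfectly synchronized.
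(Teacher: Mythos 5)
Your proof is correct. The $\supseteq$ direction is essentially the paper's own (both construct the canonical digit-expansion lift with coefficients in $R$). For $\subseteq$, however, your organization genuinely differs: the paper fixes a term $s$ of $\pi(f)$, writes an arbitrary preimage as $(\text{canonical lift}) + r$ with $r\in\langle p-t\rangle$, and splits into three cases depending on whether $\deg_{(-1,w)}(r)$ exceeds, is less than, or equals the degree of the canonical lift, using $\initial_{(-1,w)}\langle p-t\rangle=\langle p\rangle$ to kill the error in each case. You instead compute $\initial_{(-1,w)}(f)$ explicitly for a general $f\in\Rtx$ via the minimal $t$-exponent $\beta_\alpha^\ast$ per $x^\alpha$ and prove a synchronization statement: the support and coefficients of $\pinitial{(-1,w)}{f}$ agree exactly with those of $\initial_{\nu,w}(\pi(f))$ whenever the former is nonzero. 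Both arguments rest on the same non-cancellation observation (a leading $t$-coefficient $c_{\alpha,\beta^\ast_\alpha}$ with positive valuation simultaneously drops $\alpha$ from both sides), but you package it as a single uniform comparison of supports, avoiding the term-by-term decomposition and the three-way case analysis; the trade-off is a slightly heavier explicit description of $\initial_{(-1,w)}(f)$ up front, which then makes the actual comparison nearly immediate.
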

   \begin{proof}\begin{description}[leftmargin=0.5em,font=\normalfont]
     \item[$\supseteq$]
       Any term $s\in \mathcal O_K[x]$ is of the form $s=(\sum_\beta c_\beta p^\beta)\cdot  x^\alpha$ with $p\nmid c_\beta$ for all $\beta\in\NN$.
       Then the element $s':=(\sum_\beta c_\beta t^\beta)\cdot  x^\alpha\in \Rtx$
       is a natural preimage of it under $\pi$ for which we have
       \begin{displaymath}
         \initial_{\nu,w}(s)=\ov c_{\beta_0} \cdot  x^\alpha=\pinitial{(-1,w)}{s'},
         \text{ where } \beta_0=\min\{\beta\in\NN\mid c_\beta\neq 0\}.
       \end{displaymath}
       And because the valued weighted degree in $\mathcal O_K [x]$ and the weighted degree in $\Rtx$ coincide,
       \begin{displaymath}
         \deg_w(x^\alpha)-\val(\textstyle\sum_\beta c_\beta p^\beta) = \deg_{(-1,w)}(\textstyle\sum_\beta c_\beta \cdot t^\beta x^\alpha),
       \end{displaymath}
       this implies any $f\in \mathcal O_K[ x]$ has a preimage $f'\in \Rtx$ under $\pi$ such that
       \begin{displaymath}
         \initial_{\nu,w}(f)=\pinitial{(-1,w)}{f'},
       \end{displaymath}
       simply by applying the above argument to each of its terms.
     \item[$\subseteq$]
       Once again consider a term $s=\sum_\beta c_\beta p^\beta \cdot  x^\alpha\in \mathcal O_K[x]$ with $p\nmid c_\beta$ for all $\beta\in\NN$.
       Then any preimage of it under $\pi$ is of the form $s'=\sum_\beta c_\beta t^\beta x^\alpha + r$
       for some $r\in \langle t-p\rangle$.

       If $\deg_{(-1,w)}(r) > \deg_{(-1,w)}(\sum_\beta c_\beta t^\beta x^\alpha)$,
       we would have
       \begin{displaymath}
         \pinitial{(-1,w)}{s'}=\pinitial{(-1,w)}{r}=0,
       \end{displaymath}
       since $\initial_{(-1,w)}(r)\in\initial_{(-1,w)}\langle p-t\rangle = \langle p\rangle$.

       And if $\deg_{(-1,w)}(r) < \deg_{(-1,w)}(\sum_\beta c_\beta t^\beta x^\alpha)$,
       we would have
       \begin{align*}
         \pinitial{(-1,w)}{s'}&=\ov{\initial_{(-1,w)}(\textstyle\sum_\beta c_\beta t^\beta x^\alpha)}|_{t=1}
         =\ov c_{\beta_0} \cdot  x^\alpha \\
         &=\initial_{\nu,w}(\textstyle\sum_\beta c_\beta p^\beta\cdot  x^\alpha)= \initial_{\nu,w}(s),
       \end{align*}
       where $\beta_0:=\min\{\beta\in\NN\mid c_\beta\neq 0\}$.

       Now suppose $\deg_{(-1,w)}(r) = \deg_{(-1,w)}(\sum_\beta c_\beta t^\beta x^\alpha)$.
       First observe that because $t$ is weighted negatively, there can
       be no cancellation amongst the highest weighted terms of $r$ and
       the terms of $\sum_\beta c_\beta t^\beta x^\alpha$,
       as the terms of $\sum_\beta c_\beta t^\beta x^\alpha$ are not divisible by $p$, unlike
       the terms of the highest weighted terms of $r$.
       Therefore, we have
       \begin{align*}
         \pinitial{(-1,w)}{s'}&=\underbrace{\ov{\initial_{(-1,w)}(\textstyle\sum_\beta c_\beta t^\beta x^\alpha)}|_{t=1}}_{=\initial_{\nu,w}(\textstyle\sum_\beta c_\beta p^\beta\cdot  x^\alpha)}
         + \underbrace{\pinitial{(-1,w)}{r}}_{=\ov{0}} =\initial_{\nu,w}(s).
       \end{align*}

       Either way, we always have $\pinitial{(-1,w)}{s'} \in \langle\initial_{\nu,w}(s)\rangle$ for any
       arbitrary preimage $s'\in\pi^{-1}(s)$, and, as before, the same hence holds true for any arbitrary element $f\in \mathcal O_K[ x]$. \qedhere
     \end{description}
   \end{proof}

   \begin{corollary}\label{cor:dodgingValuations}
     For any ideal $I\unlhd K[x]$ and any weight vector $w\in\RR^n$, we have:
     \begin{displaymath}
       \pinitial{(-1,w)}{\pi^{-1} I}=\initial_{\nu,w}(I).
     \end{displaymath}
   \end{corollary}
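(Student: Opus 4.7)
The plan is to reduce the statement for the $K[x]$-ideal $I$ to the already-proved statement for an $\mathcal{O}_K[x]$-ideal, by passing to the intermediate ideal $I'\coloneqq I\cap \mathcal{O}_K[x]\unlhd \mathcal{O}_K[x]$. The argument has two routine steps, each hinging on one small observation.

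First, I would observe that $\pi^{-1}(I)=\pi^{-1}(I')$ as subsets of $\Rtx$. This is immediate from Convention~\ref{con:coefficients}: the map $\pi$ was defined as the composition $\Rtx\to \mathcal{O}_K[x]\hookrightarrow K[x]$, so the image of any $f\in \Rtx$ already lies in $\mathcal{O}_K[x]$, and hence $\pi(f)\in I$ if and only if $\pi(f)\in I\cap\mathcal{O}_K[x]=I'$. Applying Proposition~\ref{prop:initial} to the ideal $I'\unlhd \mathcal{O}_K[x]$ then gives
\begin{displaymath}
   \pinitial{(-1,w)}{\pi^{-1}I}=\pinitial{(-1,w)}{\pi^{-1}I'}=\initial_{\nu,w}(I').
\end{displaymath}

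Second, I would show that $\initial_{\nu,w}(I')=\initial_{\nu,w}(I)$. The inclusion $\subseteq$ is obvious since $I'\subseteq I$. For $\supseteq$, given any $f\in I$, I would pick $N\in \NN$ large enough that $p^N\cdot f\in \mathcal{O}_K[x]$; such an $N$ exists because $f$ has finitely many coefficients, each of finite valuation, and $p$ is a uniformizer. Then $p^Nf\in I'$, and from the definition of $\initial_{\nu,w}$ one checks that scaling every coefficient $c_\alpha$ by $p^N$ shifts every valuation $\nu(c_\alpha)$ by $N$, leaving both the weighted-valuated-degree comparisons and the rescaled residues $\overline{c_\alpha p^{-\nu(c_\alpha)}}$ unchanged. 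Hence $\initial_{\nu,w}(f)=\initial_{\nu,w}(p^Nf)\in \initial_{\nu,w}(I')$.

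Combining the two steps yields the desired equality. There is no real obstacle here: the only thing to be slightly careful about is the invariance of $\initial_{\nu,w}$ under scaling by powers of $p$, but this is a direct consequence of the normalisation $\overline{c_\alpha\cdot p^{-\nu(c_\alpha)}}$ built into Definition~\ref{def:valuedInitial}.
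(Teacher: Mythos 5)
Your proof is correct and follows essentially the same route as the paper's: the paper's one-line proof is precisely the reduction to $\initial_{\nu,w}(I)=\initial_{\nu,w}(I\cap\mathcal{O}_K[x])$ combined with Proposition~\ref{prop:initial}, and you have simply spelled out the two routine verifications (that $\pi^{-1}I=\pi^{-1}(I\cap\mathcal{O}_K[x])$ and that $\initial_{\nu,w}$ is invariant under scaling by powers of $p$) that the paper leaves implicit.
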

   \begin{proof}
     Follows from $\initial_{\nu,w}(I)=\initial_{\nu,w}(I\cap \mathcal O_K[ x])$.
   \end{proof}

   With our previous considerations, we can define tropical varieties of ideals in $\Rtx$ and show how some of them relate to tropical varieties of ideals in $K[x]$.

   \begin{definition}[tropical variety]\label{def:tropicalVarietiesOverRings}
     For an ideal $I\unlhd\Rtx$ we define its \emph{tropical variety} to be
     \begin{displaymath}
       \Trop(I)=\overline{\{w\in\RR_{<0}\times\RR^n\mid \initial_w(I) \text{ monomial free}\}}\subseteq\RR_{\leq 0}\times\RR^n,
     \end{displaymath}
     where $\overline{(\cdot)}$ denotes the closure in the euclidean topology.
   \end{definition}

   \begin{example}\label{ex:monFreeTermFree}
     Unlike over coefficient fields, initial ideals over coefficient rings may be devoid of monomials $t^\beta x^\alpha$, $\beta\in \NN$ and $\alpha\in\NN^n$ while containing terms $c\cdot t^\beta x^\alpha$, $c\notin R^\ast$. Consequently, tropical varieties over rings need not be pure.

     Consider the principal ideal generated by $g:=x+y+2z\in\ZZ\llbracket t \rrbracket[x,y,z]$. Figure~\ref{fig:sectionTropZ} shows the intersection of its tropical variety with an affine subspace of codimension $2$. Because $g$ is homogeneous in $x,y,z$, its tropical variety is invariant under translation by $(0,1,1,1)$, and since $t$ does not occur in $g$, it is also closed under translation by $(-1,0,0,0)$. Hence, the remaining points are then uniquely determined up to symmetry.

     \begin{figure}[h]
       \centering
       \begin{tikzpicture}
         \fill[color=blue!20]
         (-2,-2) rectangle (0,0);
         \fill[color=red!20]
         (-2,0) rectangle (2,2)
         (0,-2) rectangle (2,2);
         \draw
         (0,0) -- (2,2)
         (0,0) -- (-2,0)
         (0,0) -- (0,-2);
         \fill (0,0) circle (2pt);
         \node[anchor=north east, font=\scriptsize] at (0,0) {$(-1,0,0,0)$};
         \fill (0.5,0.5) circle (2pt);
         \node[anchor=north west, font=\scriptsize] at (0.5,0.5) {$(-1,1,1,0)$};
         \node[anchor=west] (q1) at (3,0) {$\initial_w(I) = \langle x\rangle$};
         \node[anchor=north, yshift=-5] at (q1) {contains monomial};
         \node[anchor=east] (q2) at (-3,1.25) {$\initial_w(I) = \langle y\rangle$};
         \node[anchor=north, yshift=-5] at (q2) {contains monomial};
         \node[anchor=east] (q3) at (-3,-0.75) {$\initial_w(I)=\langle 2z\rangle$};
         \node[anchor=north, yshift=-5] at (q3) {monomial free};
       \end{tikzpicture}\vspace{-0.5cm}%
       \caption{$\Trop(\langle x+y+z2\rangle) \cap \{ w_t = -1, w_z=0 \}$}
       \label{fig:sectionTropZ}
     \end{figure}

     Since $\initial_{(-1,-1,-1,0)}(g)=2z$ is no monomial, the entire
     lower left quadrant is included in our tropical variety, while the
     two other maximal cones are not. However, because
     $\initial_{(-1,1,1,0)}(g)=x+y$ is no monomial either, the edge
     containing it is also part of our tropical variety. Therefore, the
     tropical variety cannot be the support of a pure polyhedral
     complex. Note, however, that $I$ is not the type of ideal we are
     interested in, i.e.~the type of ideal occurring in the following theorem.
   \end{example}

   \begin{theorem}\label{thm:bijection}
     Let $I\unlhd K[ x]$ be an ideal. The projection $\RR_{\leq 0}\times\RR^n\rightarrow\RR^n$ induces a natural bijection
     \begin{align*}
       \Trop(\pi^{-1} I) \cap (\{-1\}\times \RR^n) &\overset{\sim}{\longrightarrow} \Trop_{\hspace{-0.1cm}\nu}(I)\\
       (-1,w_1,\ldots,w_n)&\longmapsto (w_1,\ldots,w_n).
     \end{align*}
   \end{theorem}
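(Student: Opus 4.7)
The plan is to prove, for every $w\in\RR^n$, the equivalence $(-1,w)\in\Trop(\pi^{-1}I)\iff w\in\Trop_{\hspace{-0.1cm}\nu}(I)$; the projection is then visibly a bijection. Since $\initial_{\lambda w'}(f)=\initial_{w'}(f)$ for any $\lambda>0$, the set $T:=\{w'\in\RR_{<0}\times\RR^n:\initial_{w'}(\pi^{-1}I)\text{ is monomial-free}\}$ is invariant under positive scaling. Any sequence in $T$ converging to some $(-1,w)$ can thus be rescaled (by dividing by the absolute value of its first coordinate) to a sequence in $T\cap(\{-1\}\times\RR^n)$ with the same limit, so $\overline T\cap(\{-1\}\times\RR^n)$ equals the closure of $T\cap(\{-1\}\times\RR^n)$ taken within the slice. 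As $\Trop_{\hspace{-0.1cm}\nu}(I)$ is already closed by the theorem recalled earlier, it suffices to establish the set-theoretic equality $T\cap(\{-1\}\times\RR^n)=\{-1\}\times\Trop_{\hspace{-0.1cm}\nu}(I)$.

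\textbf{Reduction to a sub-lemma.}
This equality asks that $\initial_{(-1,w)}(\pi^{-1}I)\subseteq R[t,x]$ be monomial-free if and only if $\initial_{\nu,w}(I)\subseteq\mathfrak K[x]$ is monomial-free. The structural input I exploit is that $p-t\in\pi^{-1}I$ has initial form $\initial_{(-1,w)}(p-t)=p$ (the weight of $p$ is $0$ and that of $-t$ is $-1$), whence $p\cdot R[t,x]\subseteq\initial_{(-1,w)}(\pi^{-1}I)$. Writing $\bar J$ for the image of $\initial_{(-1,w)}(\pi^{-1}I)$ under reduction modulo $p$, this containment lets me absorb any $p$-multiple discrepancy, so $\initial_{(-1,w)}(\pi^{-1}I)$ contains a monomial $t^\beta x^\alpha$ if and only if $\bar J\subseteq\mathfrak K[t,x]$ does. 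Moreover, Corollary~\ref{cor:dodgingValuations} directly identifies $\bar J|_{t=1}=\initial_{\nu,w}(I)$. The whole theorem therefore reduces to the following sub-lemma: for a $(-1,w)$-homogeneous ideal $\bar J\subseteq\mathfrak K[t,x]$, the ideal $\bar J$ contains some monomial $t^\beta x^\alpha$ if and only if $\bar J|_{t=1}$ contains the monomial $x^\alpha$.

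\textbf{Proof of the sub-lemma.}
The forward direction of the sub-lemma is immediate via $t^\beta x^\alpha\mapsto x^\alpha$. For the reverse, pick $\bar g\in\bar J$ with $\bar g|_{t=1}=x^\alpha$ and decompose $\bar g=\sum_d\bar g_d$ into $(-1,w)$-homogeneous pieces, each still in $\bar J$. I first replace $\bar g$ by $\bar g^{(w\alpha)}:=\sum_{d\in w\alpha+\ZZ}\bar g_d$; tracing which $d$'s can contribute to each $x^\gamma$-coefficient of the dehomogenization shows that $\bar g^{(w\alpha)}|_{t=1}$ is still $x^\alpha$, so without loss of generality all nonzero $\bar g_d$'s have degrees in the single $\ZZ$-coset $w\alpha+\ZZ$. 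Letting $d^\ast$ be the smallest such degree and setting $\tilde g:=\sum_d t^{d-d^\ast}\bar g_d\in\bar J_{d^\ast}$, I obtain a $(-1,w)$-homogeneous element of $\bar J$ satisfying $\tilde g|_{t=1}=x^\alpha$. The crucial observation is that the dehomogenization $t\mapsto 1$ restricted to a single graded piece $\mathfrak K[t,x]_{d^\ast}$ is $\mathfrak K$-linearly \emph{injective}, since the $t$-exponent $w\cdot\gamma-d^\ast$ of any monomial of that degree is uniquely determined by $\gamma$. This forces $\tilde g=t^{w\alpha-d^\ast}x^\alpha$, a monomial in $\bar J$ (and $w\alpha-d^\ast\in\NN$ because $d^\ast\leq w\alpha$). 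The hardest step in the whole argument is precisely this extraction of a genuinely homogeneous lift of $x^\alpha$ via the coset-projection and $t$-multiplication; once in hand, the injectivity of $t\mapsto 1$ on a single graded piece delivers the monomial outright.
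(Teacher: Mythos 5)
Your proof is correct and takes a genuinely different route from the paper's. The forward direction (monomial in $\initial_{(-1,w)}(\pi^{-1}I)$ implies monomial in $\initial_{\nu,w}(I)$) matches the paper: both invoke Corollary~\ref{cor:dodgingValuations}. Where you diverge is the reverse implication. The paper works directly in $R[t,x]$: it writes a preimage of $x^\alpha$ as $x^\alpha+(t-1)r+ps$, absorbs $ps$ using $p\in\initial_{(-1,w)}(\pi^{-1}I)$, decomposes $r$ into $(-1,w)$-homogeneous layers $r_1,\ldots,r_l$, and then runs a hands-on ``layer shaving'' recursion that successively multiplies by $t$ and subtracts homogeneous pieces, eventually producing a monomial $t^i x^\alpha$ in the initial ideal. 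You instead pass to $\bar J=\initial_{(-1,w)}(\pi^{-1}I)\bmod p$ inside $\mathfrak K[t,x]$ (again using $p\in\initial_{(-1,w)}(\pi^{-1}I)$, so that ``monomial in $J$'' and ``monomial in $\bar J$'' are equivalent), and then prove an abstract sub-lemma about $(-1,w)$-homogeneous ideals of $\mathfrak K[t,x]$: project onto the degree coset $w\alpha+\ZZ$, push all layers down to the bottom degree $d^\ast$ by multiplying with powers of $t$, and conclude by the observation that $t\mapsto1$ is $\mathfrak K$-linearly injective on each graded piece $\mathfrak K[t,x]_{d^\ast}$. This is cleaner and more conceptual than the paper's explicit recursion, and the injectivity observation isolates exactly why the argument works.

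You also explicitly handle the closure in Definition~\ref{def:tropicalVarietiesOverRings} via the positive-scaling invariance of the monomial-free locus and the closedness of $\Trop_\nu(I)$; the paper's proof jumps straight to the pointwise equivalence and leaves this compatibility implicit. That reduction is correct and worth having spelled out. Two small remarks worth a sentence in a write-up: when you write $d^\ast\leq w\alpha$ hence $w\alpha-d^\ast\in\NN$, you should note this is because both lie in the coset $w\alpha+\ZZ$ and the bottom layer of $\bar g^{(w\alpha)}$ contributing to the $x^\alpha$-coefficient has degree $\leq w\alpha$; and the identification $R/pR\cong\mathfrak K$, which makes ``reduce mod $p$'' land in $\mathfrak K[t,x]$, follows from $\mathcal O_K\cong \Rt/\langle p-t\rangle$ by killing $\langle p,t\rangle$ on both sides.
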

   \begin{proof}
     For the bijection, we show that
     \begin{displaymath}
       \initial_{\nu,w}(I) \text{ monomial free } \quad \Longleftrightarrow \quad
       \initial_{(-1,w)}(\pi^{-1} I) \text{ monomial free}.
     \end{displaymath}
     \begin{description}[leftmargin=0.5em,font=\normalfont]
     \item[$\Rightarrow$]
       Assume that $\initial_{(-1,w)}(\pi^{-1} I)\unlhd\Rtx$ contains a monomial
       $t^\beta x^\alpha$.
       Then, by Corollary~\ref{cor:dodgingValuations}, we have
       $\initial_{\nu,w}(I) = \pinitial{(-1,w)}{\pi^{-1} I}$, which means
       $\initial_{\nu,w}(I)$ must contain the monomial $x^\alpha \in \mathfrak{K}[ x]$.
     \item[$\Leftarrow$]
       Assume that $\initial_{\nu,w}(I)\unlhd \mathfrak{K}[ x]$ contains a monomial $x^\alpha$.
       Then, by Corollary~\ref{cor:dodgingValuations},
       $\initial_{(-1,w)}(\pi^{-1} I)$ must contain an element of
       of the form $ x^\alpha + (t-1)\cdot r + p\cdot s$, for some $r,s\in R[t, x]$.
       Recall that $p$ lies in $\initial_{(-1,w)}(\pi^{-1} I)$,
       therefore so does $p\cdot s$, and hence we have
       $ x^\alpha + (t-1)\cdot r\in\initial_{(-1,w)}(\pi^{-1} I)$. \\
       Let $r=r_l+\ldots+r_1$ be a decomposition of $r$ into its $(-1,w)$-homogeneous layers
       with $\deg_{(-1,w)}(r_1)<\ldots<\deg_{(-1,w)}(r_l)$.
       For sake of simplicity, we now distinguish between three cases:
       \begin{description}[leftmargin=0.1cm]
       \item[\rm 1. $\deg_{(-1,w)}( x^\alpha)\geq \deg_{(-1,w)}(r_l)$]
         Set $g_1:=r-r_1=r_l+\ldots+r_2$. Then
         \begin{displaymath}
           x^\alpha + (t-1)\cdot r =  x^\alpha + (t-1)\cdot (g_1+r_1)
           = \underbrace{ x^\alpha + (t-1)\cdot g_1-r_1}_{\text{higher weighted degree}}
           +\;t \cdot r_1.
         \end{displaymath}
         Hence $x^\alpha + (t-1)\cdot g_1-r_1,t\cdot r_1\in \initial_{(-1,w)}(\pi^{-1} I)$ and,
         more importantly,
         \begin{displaymath}
           t\cdot ( x^\alpha + (t-1)\cdot g_1 - r_1)+t\cdot r_1=t x^\alpha+(t-1) t\cdot g_1
           \in \initial_{(-1,w)}(\pi^{-1} I),
         \end{displaymath}
         effectively shaving off the $r_1$ layer.
         We can continue this process by setting $g_2:=g_1-r_2 = r_l+\ldots+r_3$.
         Then
         \begin{align*}
           t  x^\alpha + (t-1) t\cdot g_1 &= t  x^\alpha + (t-1) t\cdot (g_2+r_2) \\
           &= \underbrace{t x^\alpha + (t-1) t\cdot g_2 - t\cdot r_2}_{\text{higher weighted degree}}
           + t^2\cdot r_2.
         \end{align*}
         Hence $t x^\alpha + (t-1) t\cdot g_2-t\cdot r_2, t^2\cdot r_2\in
         \initial_{(-1,w)}(\pi^{-1} I)$ and, as above,
         \begin{align*}
           &t\cdot (t x^\alpha + (t-1) t\cdot g_2-t\cdot r_2)+t^2\cdot r_2 \\
           &\qquad   =t^2 x^\alpha+(t-1) t^2\cdot g_2 \in \initial_{(-1,w)}(\pi^{-1} I)
         \end{align*}
         removing the $r_2$ layer. Eventually, we obtain $t^l x^\alpha \in \initial_{(-1,w)}(\pi^{-1} I)$.
       \item[\rm 2. $\deg_{(-1,w)}( x^\alpha)\leq \deg_{(-1,w)}(r_1)$]
         Set $g_1:=r-r_l=r_{l-1}+\ldots+r_1$. Then
         \begin{displaymath}
           x^\alpha + (t-1)\cdot r =  x^\alpha + (t-1)\cdot (g_1+r_l)
           = \underbrace{ x^\alpha + (t-1)\cdot g_1+t\cdot r_l}_{\text{lower weighted degree}}
           - r_l.
         \end{displaymath}
         Thus $r_l,  x^\alpha + (t-1)\cdot g_1+t\cdot r_l\in \initial_{(-1,w)}(\pi^{-1} I)$ and,
         more importantly,
         \begin{displaymath}
           x^\alpha + (t-1)\cdot r_1+t\cdot g_1-t\cdot g_1= x^\alpha+(t-1)\cdot r_1
           \in \initial_{(-1,w)}(\pi^{-1} I),
         \end{displaymath}
         shaving off the the $r_l$ layer this time. Continuing this pattern eventually yields
         $ x^\alpha\in\initial_{(-1,w)}(\pi^{-1} I)$.
       \item[\rm 3. $\deg_{(-1,w)}(r_1)<\deg_{(-1,w)}( x^\alpha)<\deg_{(-1,w)}(r_l)$]
         In this case we can use a combination of the steps in the previous cases to see
         $t^i\cdot x^\alpha \in \initial_{(-1,w)}(\pi^{-1} I)$ for the $1\leq i\leq k$ such that
         $\deg_{(-1,w)}(r_{i-1})<\deg_{(-1,w)}( x^\alpha)\leq \deg_{(-1,w)}(r_i)$.
       \end{description}
       In either case, we see that $\initial_{(-1,w)}(\pi^{-1} I)$ contains a monomial. \qedhere
     \end{description}
   \end{proof}

   \begin{corollary}\label{cor:structureTheorem}
     If $I\unlhd K[x]$ defines an irreducible subvariety of $(K^\ast)^n$ of dimension $d$,
     then $\Trop(\pi^{-1} I)$ is the support of a pure polyhedral fan of dimension $d+1$ connected in codimension one.
   \end{corollary}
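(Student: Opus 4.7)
The plan is to lift the polyhedral complex structure of $\Trop_{\hspace{-0.1cm}\nu}(I)$ to a fan structure on $\Trop(\pi^{-1}I)$ via homogenization, using Theorem~\ref{thm:bijection} together with the $\RR_{>0}$-invariance of initial ideals under rescaling of weights. Rescaling a weight vector by $\lambda>0$ does not alter which monomials attain maximal $w$-degree, so $\initial_{\lambda w}(\pi^{-1}I)=\initial_{w}(\pi^{-1}I)$ for every $\lambda>0$ and $w\in\RR_{<0}\times\RR^n$. Hence the set
\begin{displaymath}
S:=\{w\in\RR_{<0}\times\RR^n : \initial_w(\pi^{-1}I)\text{ monomial free}\}
\end{displaymath}
is $\RR_{>0}$-invariant, and combining this with Theorem~\ref{thm:bijection} identifies $S=\{\lambda\cdot(-1,w) : \lambda>0,\ w\in\Trop_{\hspace{-0.1cm}\nu}(I)\}$.

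By the structure theorem for tropical varieties over valued fields cited earlier, $\Trop_{\hspace{-0.1cm}\nu}(I)$ is the support of a pure polyhedral complex $\Sigma$ of dimension $d$ that is connected in codimension one. For each cell $P\in\Sigma$ I would form the closed homogenization cone
\begin{displaymath}
\widetilde{P}:=\overline{\{\lambda(-1,w) : \lambda>0,\ w\in P\}}\subseteq\RR_{\leq 0}\times\RR^n,
\end{displaymath}
which is a polyhedral cone of dimension $\dim P + 1$ whose boundary in the hyperplane $\{w_0=0\}$ is $\{0\}\times\operatorname{rec}(P)$. The collection of all such $\widetilde{P}$ together with their faces assembles into a polyhedral fan $\widetilde{\Sigma}$, and by Definition~\ref{def:tropicalVarietiesOverRings} its support is exactly $\Trop(\pi^{-1}I)=\overline{S}$. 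Purity of $\Sigma$ in dimension $d$ then upgrades directly to purity of $\widetilde{\Sigma}$ in dimension $d+1$.

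For codimension-one connectedness of $\widetilde{\Sigma}$, if two maximal cells $P_1,P_2\in\Sigma$ share a codimension-one face $Q$, then the maximal cones $\widetilde{P}_1,\widetilde{P}_2\in\widetilde{\Sigma}$ share the codimension-one cone $\widetilde{Q}$. Hence the codimension-one dual graph of $\widetilde{\Sigma}$ contains that of $\Sigma$ as a subgraph and inherits its connectedness.

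The step I expect to require the most care is verifying that $\overline{S}$ coincides with $\bigcup_{P\in\Sigma}\widetilde{P}$, i.e.\ that no spurious pieces appear along the boundary hyperplane $\{w_0=0\}$. This reduces to the standard polyhedral fact that for any sequence $\lambda_n\to 0^+$ and $w_n\in P$ with $\lambda_n w_n\to v$, one automatically has $v\in\operatorname{rec}(P)$, together with its converse, which produces precisely the boundary cells $\{0\}\times\operatorname{rec}(P)$ and nothing more.
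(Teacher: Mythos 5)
The paper leaves this corollary without a written proof in the source, so there is no reference argument to compare against; your proof is correct and is the natural one, tracing the structure theorem for $\Trop_{\hspace{-0.1cm}\nu}(I)$ through Theorem~\ref{thm:bijection} and the rescaling invariance of $\initial_w(\pi^{-1}I)$ on the open halfspace. Your identification $\overline S=\bigcup_P\widetilde P$ and the dual-graph comparison for codimension-one connectedness are exactly the right reductions.

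The one place I would ask you to be more explicit is the assertion that the homogenization cones $\widetilde P$ together with their faces form a fan. This is not automatic for an arbitrary polyhedral complex $\Sigma$ supported on $\Trop_{\hspace{-0.1cm}\nu}(I)$: two cells $P_1,P_2$ may have recession cones whose intersection is not a common face of $\operatorname{rec}(P_1)$ and $\operatorname{rec}(P_2)$, in which case $\widetilde P_1\cap\widetilde P_2$ fails to be a face of both along the boundary hyperplane $\{0\}\times\RR^n$. The assertion does hold once you take $\Sigma$ to be the subcomplex of the Gr\"obner complex $\Sigma_\nu(I)$ supported on $\Trop_{\hspace{-0.1cm}\nu}(I)$, since the recession cones of the Gr\"obner polytopes form a well-defined recession fan; stating that you make this specific choice of $\Sigma$ closes the gap and lets the purity and connectedness arguments go through exactly as you wrote them.
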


   \begin{example}\label{ex:puiseuxSeries}
     Let $K:=\QQ(\!(u)\!)$ be the field of Laurent series, equipped with is natural valuation $\nu_u$, and
     let $I\unlhd K [x,y]$ be the principal ideal generated by
     $(x+y+1)\cdot(u^2x+y+u)$. Then $\Trop_{\hspace{-0.1cm}\nu_u}(I)$ is the union of two tropical
     lines, one with vertex at $(0,0)$ and one with vertex at $(1,-1)$.
     Setting $R:=\QQ[t]\subseteq\QQ\llbracket t\rrbracket=\mathcal O_K$, Proposition~\ref{prop:initial} implies that for any
     weight vector $w=(w_t,w_x,w_y)\in\RR_{<0}\times\RR^2$ in the lower
     open halfspace we have
     \begin{displaymath}
       w\in\Trop(\pi^{-1}I) \quad \Longleftrightarrow \quad \left(\frac{w_x}{|w_t|},\frac{w_y}{|w_t|}\right) \in \Trop_{\hspace{-0.1cm}\nu_u}(I).
     \end{displaymath}
     Hence $\Trop(\pi^{-1}I)$ is as shown in Figure
     \ref{fig:TropPuiseux}, the cone over $\Trop_{\hspace{-0.1cm}\nu_u}(I)$. The polyhedral complex consists of $6$ rays
     and $8$ two-dimensional cones in a way that the intersection with
     the affine hyperplane yields a highlighted polyhedral complex,
     $\Trop_{\hspace{-0.1cm}\nu_u}(I)$.
     \begin{figure}[h]
       \centering
       \begin{tikzpicture}
         \fill[blue!20] (-2.5,-1,5) -- (3,-1,5) -- (3,-1,-2) -- (-2.5,-1,-2) -- cycle;
         \node[anchor=base west, font=\scriptsize, xshift=0.25cm] at (3,-1,5) {$\{-1\}\times\RR^2$};
         \draw[red] (-2,-1,0) -- (0,-1,0)
         (0,-1,0) -- (0,-1,1)
         (0,-1,0) -- (1.5,-1,-1.5);
         \draw[->,shorten >=-3em] (0,0,0) -- (-1,0,0);
         \draw[->,shorten >=-3em] (0,0,0) -- (1,0,-1);
         \draw[->,shorten >=-3em] (0,0,0) -- (0,0,1);
         \draw[->,shorten >=0.1cm] (0,0,0) -- (1,-1,1);
         \draw[->,shorten >=0.1cm] (0,0,0) -- (0,-1,0);
         \draw[->,shorten >=0.1cm] (0,0,0) -- (0,-1,1);
         \draw[shorten <=0.1cm, dashed] (1,-1,1) -- (1.5,-1.5,1.5);
         \draw[shorten <=0.1cm, dashed] (0,-1,0) -- (0,-1.75,0);
         \draw[shorten <=0.1cm, dashed] (0,-1,1) -- (0,-1.6,1.6);

         \fill (0,0,0) circle (2pt);
         \node[anchor=south,font=\scriptsize] at (0,0,0) {$(0,0,0)$};
         \draw[red] (-1,-1,1) -- (1,-1,1)
         (1,-1,1) -- (1,-1,4)
         (1,-1,1) -- (2.5,-1,-0.5);
         \draw[red] (0,-1,1) -- (0,-1,3);
         \fill[red] (1,-1,1) circle (2pt)
         (0,-1,0) circle (2pt)
         (0,-1,1) circle (2pt);
       \end{tikzpicture}\vspace{-0.5cm}%
       \caption{$\Trop(\pi^{-1}I)$ as cone over $\Trop_{\hspace{-0.1cm}\nu_u}(I)$}
       \label{fig:TropPuiseux}
     \end{figure}
   \end{example}

   \begin{example}\label{ex:chanTwoAdic}
     Consider
     $I=\langle x_1-2x_2+3x_3,3x_2-4x_3+5x_4\rangle\unlhd\QQ_2[x_1,\ldots,x_4]$,
     whose preimage is given by
     \begin{displaymath}
       \pi^{-1}I=\langle x_1-2x_2+3x_3,3x_2-4x_3+5x_4, 2-t \rangle\unlhd\ZZ\llbracket t\rrbracket[x_1,\ldots,x_4].
     \end{displaymath}
     The tropical variety of the preimage is combinatorially of the form shown in
     Figure~\ref{fig:exBijectionChan0} and is invariant under the one-dimensional subspace
     generated by $(0,1,1,1,1)$. Hence each of the six vertices represents a
     two-dimensional cone and each of the five edges represents a
     three-dimensional cone.
     \begin{figure}[h]
       \centering
       \begin{tikzpicture}
         \draw (0,0) -- (2,0);
         \fill (0,0) circle (2pt)
         (2,0) circle (2pt)
         (4,1) circle (2pt)
         (4,-1) circle (2pt)
         (-2,1) circle (2pt)
         (-2,-1) circle (2pt);
         \draw (0,0) -- (-2,1)
         (0,0) -- (-2,-1)
         (2,0) -- (4,1)
         (2,0) -- (4,-1);
         \node[anchor=south west,font=\scriptsize,xshift=-5,yshift=1em] (v1) at (0,0) {$(-1,1,-1,1,-1)$};
         \node[anchor=north east,font=\scriptsize,xshift=5,yshift=-1em] (v2) at (2,0) {$(-2,-1,1,-1,1)$};
         \node[anchor=east,font=\scriptsize] at (-2,1) {$(0,-3,1,1,1)$};
         \node[anchor=east,font=\scriptsize] at (-2,-1) {$(0,1,1,-3,1)$};
         \node[anchor=west,font=\scriptsize] at (4,1) {$(0,1,-3,1,1)$};
         \node[anchor=west,font=\scriptsize] at (4,-1) {$(0,1,1,1,-3)$};
         \draw[->,shorten >= 0.5em,thin] (v1) -- (0,0);
         \draw[->,shorten >= 0.5em,thin] (v2) -- (2,0);
       \end{tikzpicture}\vspace{-0.5cm}
       \caption{$\Trop(\langle x_1-2x_2+3x_3,3x_2-4x_3+5x_4,2-t\rangle)$}
       \label{fig:exBijectionChan0}
     \end{figure}

     Intersected with the affine hyperplane $\{-1\}\times\RR^4$, we
     obtain a polyhedral complex as shown in Figure
     \ref{fig:exBijectionChan1}, any vertex of Figure~\ref{fig:exBijectionChan0} in $\{0\}\times\RR^4$ becoming a point at infinity.

     \begin{figure}[h]
       \centering
       \begin{tikzpicture}
         \draw (0,0) -- (2,0);
         \fill (0,0) circle (2pt);
         \fill (2,0) circle (2pt);
         \draw (0,0) -- node[sloped] (a) {$<$} (-2,1)
         (0,0) -- node[sloped] (b) {$<$} (-2,-1)
         (2,0) -- node[sloped] (c) {$>$} (4,1)
         (2,0) -- node[sloped] (d) {$>$} (4,-1);
         \node[anchor=south west,font=\scriptsize,xshift=-5,yshift=1em] (v1) at (0,0) {$(1,-1,1,-1)$};
         \node[anchor=north east,font=\scriptsize,xshift=5,yshift=-1em] (v2) at (2,0) {$\frac{1}{2}(-1,1,-1,1)$};
         \node[anchor=east,font=\scriptsize, xshift=-0.2cm] at (a) {$(-3,1,1,1)$};
         \node[anchor=east,font=\scriptsize, xshift=-0.2cm] at (b) {$(1,1,-3,1)$};
         \node[anchor=west,font=\scriptsize, xshift=0.2cm] at (c) {$(1,-3,1,1)$};
         \node[anchor=west,font=\scriptsize, xshift=0.2cm] at (d) {$(1,1,1,-3)$};
         \draw[->,shorten >= 0.5em,thin] (v1) -- (0,0);
         \draw[->,shorten >= 0.5em,thin] (v2) -- (2,0);
       \end{tikzpicture}\vspace{-0.5cm}
       \caption{$\Trop_{\hspace{-0.1cm}\nu_2}(\langle x_1-2x_2+3x_3,3x_2-4x_3+5x_4\rangle)$}
       \label{fig:exBijectionChan1}
     \end{figure}

   \end{example}


   \section{Tracing Gr\"obner complexes to a trivial valuation}\label{sec:groebnerComplex}

   In this section, we show how the Gr\"obner complexes of ideals in $K[x]$ can be traced back to the Gr\"obner fans of ideals in $\Rtx$. We will show how the Gr\"obner fan induces a refinement of the Gr\"obner complex and how to determine whether two integral Gr\"obner cones map to the same valued Gr\"obner polytope. For the latter, we will need to delve into some basics in Gr\"obner bases. We close this section with a remark on $p$-adic Gr\"obner bases as introduced by Chan and Maclagan \cite{CM13}.

   \begin{definition}[Gr\"obner polyhedra, Gr\"obner complexes over valued fields]
     For a homogeneous ideal $I\unlhd K[x]$ and a weight vector $w\in\RR^n$ we define its \emph{Gr\"obner polytope} to be
     \begin{displaymath}
       C_{\nu,w}(I):=\overline{\{v\in\RR^n\mid \initial_{\nu,v}(I) = \initial_{\nu,w}(I) \}}\subseteq \RR^n,
     \end{displaymath}
     where $\ov{(\cdot)}$ denotes the closure in the euclidean topology.
     We will refer to the collection $\Sigma_\nu(I):=\{C_{\nu,w}(I)\mid w\in\RR^n\}$ as the \emph{Gr\"obner complex} of $I$.
   \end{definition}

   \begin{theorem}[Gr\"obner complex, {\cite[Theorem 2.5.3]{MS15}}]
     Let $I\unlhd K[x]$ be a homogeneous ideal. Then all $C_{\nu,w}(I)$ are convex polytopes and $\Sigma_\nu(I)$ is a finite polyhedral complex.
   \end{theorem}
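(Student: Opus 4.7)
The plan is to derive the theorem from the (to-be-developed) Gr\"obner fan of $\pi^{-1}I$ in $\Rtx$ rather than appealing to the proof in \cite{MS15}. The starting point is Corollary~\ref{cor:dodgingValuations}, by which $\initial_{\nu,w}(I)=\pinitial{(-1,w)}{\pi^{-1}I}$ for every $w\in\RR^n$. Consequently $C_{\nu,w}(I)$ is the Euclidean closure of the fibre through $w$ of the map $v\mapsto \pinitial{(-1,v)}{\pi^{-1}I}$, and the task reduces to identifying these fibres as convex polytopes assembling into a finite polyhedral complex.

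For this, I would invoke the Gr\"obner fan of $\pi^{-1}I$ on $\RR_{\leq 0}\times\RR^n$, which is a finite rational polyhedral fan for a homogeneous ideal generated by elements of $R[t,x]$ (this is precisely the content to be developed in Section~\ref{sec:groebnerComplex}). Intersecting it with the hyperplane $\{-1\}\times\RR^n$ and projecting to $\RR^n$ produces a finite polyhedral complex $\Sigma'$ that refines $\Sigma_\nu(I)$: each cell of $\Sigma'$ is contained in a unique $C_{\nu,w}(I)$, and each $C_{\nu,w}(I)$ is the union of those cells whose initial ideal $\initial_{(-1,\cdot)}(\pi^{-1}I)$ specialises to $\initial_{\nu,w}(I)$ after setting $t=1$ and reducing modulo $p$. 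Finiteness of $\Sigma_\nu(I)$ is then immediate.

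The main obstacle is showing that this union is itself a convex polytope, rather than merely a union of polytopes that happen to have the same image in $\mathfrak{K}[x]$. I would tackle this by fixing a generating set $g_1,\ldots,g_s\in R[t,x]$ of $\pi^{-1}I$ and establishing the identity
\[ C_{\nu,w}(I) \;=\; \bigcap_{i=1}^s \bigl\{v\in\RR^n \mid \pinitial{(-1,v)}{g_i} = \pinitial{(-1,w)}{g_i}\bigr\}, \]
whose non-trivial inclusion ``$\supseteq$'' requires a Gr\"obner/standard-basis argument in the spirit of the reductions carried out in the proof of Theorem~\ref{thm:bijection}, ensuring that no new initial forms appear in $\initial_{\nu,v}(I)$ beyond those coming from the fixed generators. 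Each factor on the right is cut out by finitely many linear (in)equalities encoded by the ``specialised Newton polytope'' of $g_i$ (the Newton polytope in $x$ of the polynomial obtained by taking $(-1,w)$-maximal terms of $g_i$ and then setting $t=1$ modulo $p$), so the intersection is a polyhedron, and homogeneity of $I$ together with the usual slicing argument gives the polytope property. Finally, the face compatibilities making $\Sigma_\nu(I)$ a polyhedral complex are inherited from those of the Gr\"obner fan of $\pi^{-1}I$ via the affine isomorphism $\{-1\}\times\RR^n \cong \RR^n$.
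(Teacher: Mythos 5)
The paper does not prove this theorem; it quotes it from \cite[Theorem 2.5.3]{MS15}. Your plan to derive it instead from the Gr\"obner fan of $\pi^{-1}I$ is a reasonable ambition, and your framing of the obstacle is exactly right: the image of $\Sigma(\pi^{-1}I)\cap(\{-1\}\times\RR^n)$ only refines $\Sigma_\nu(I)$, so the real work is showing that each $C_{\nu,w}(I)$, a priori a union of several Gr\"obner cones of $\pi^{-1}I$, is in fact convex. The identity you propose to close that gap, however, is false.

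You claim that for a fixed generating set $g_1,\ldots,g_s$ of $\pi^{-1}I$ one has
\[ C_{\nu,w}(I) \;=\; \bigcap_{i=1}^s \bigl\{v\in\RR^n \mid \pinitial{(-1,v)}{g_i} = \pinitial{(-1,w)}{g_i}\bigr\}, \]
flagging only ``$\supseteq$'' as nontrivial. In fact ``$\subseteq$'' already fails, and the paper's own Example~\ref{ex:groebnerpolytope} supplies a counterexample. There $I=\langle 2y+x,z^2+y^2\rangle\unlhd\QQ_2[x,y,z]$, and for $w=(1,3,7)$ and $v=(1,10,5)$ one has $\initial_{\nu_2,w}(I)=\initial_{\nu_2,v}(I)=\langle y,z^2\rangle$, so $v$ lies in the relative interior of $C_{\nu_2,w}(I)$. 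Yet for $g:=z^2+y^2\in\pi^{-1}I$ a direct computation gives $\pinitial{(-1,w)}{g}=z^2$ while $\pinitial{(-1,v)}{g}=y^2$, so as soon as $g$ (or any element with this behaviour) appears among the $g_i$, the point $v$ is excluded from the right-hand side. Replacing the fixed generating set by the $w$-adapted initially reduced standard basis does not help: $z^2+y^2$ already occurs in $G_{(-1,w)}$ in that example, and the same computation shows that the intersection you write down captures at most the single Gr\"obner cone $C_{(-1,w)}(\pi^{-1}I)$, not the full Gr\"obner polytope.

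The underlying difficulty is that no single finite generating set encodes the coarser decomposition by initial-form equalities; any such description must use a Gr\"obner basis \emph{adapted to $w$ over the valued field itself}, and one must then argue separately that the resulting polyhedra, taken over all $w$, assemble into a finite complex. This is what the argument behind \cite[Theorem 2.5.3]{MS15} does, using the Chan--Maclagan theory of reduced Gr\"obner bases over fields with valuation and a Noetherianity argument to bound the number of distinct initial ideals. Your reduction to $\Rtx$ is consistent with that theorem once both are established---this is exactly what the corollary following Proposition~\ref{prop:GroebnerFan} records---but it does not replace it.
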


   \lang
   {
     \begin{example}\label{ex:nonHomogeneousVsHomogeneous}
       Note that for the guaranteed convexity of the Gr\"obner cells, $I$
       needs to be homogeneous. Consider for example the inhomogeneous
       ideal $I=\langle x+1,y+1\rangle \unlhd \QQ[x^{\pm 1},y^{\pm 1}]$,
       where $\QQ$ is endowed with the trivial valuation. Figure
       \ref{fig:groebnerCellsX+1Y+1} shows the decomposition of the weight
       space $\RR^2$ into two Gr\"obner cells, one of which is clearly not
       convex.
       \begin{figure}[h]
         \centering
         \begin{tikzpicture}
           \fill[color=blue!20] (0,0) -- (2,0) -- (2,2) -- (0,2) -- cycle;
           \fill[color=red!20] (0,0) -- (0,2) -- (-2,2) -- (-2,-2) -- (0,-2) -- cycle
           (0,0) -- (0,-2) -- (2,-2) -- (2,0) -- cycle;
           \draw
           (0,0) -- (2,0)
           (0,0) -- (0,2);
           \fill (0,0) circle (2pt);
           \node[anchor=north east, font=\scriptsize] at (0,0) {$(0,0)$};
           \node[anchor=west] at (3,1) {$\{w\in\RR^2\mid \initial_w(I) = \langle x,y\rangle\}$};
           \node[anchor=west] at (3,-1) {$\{w\in\RR^2\mid \initial_w(I) = \langle 1\rangle\}$};
         \end{tikzpicture}\vspace{-0.5cm}%
         \caption{Gr\"obner cells of $\langle x+1,y+1\rangle$}
         \label{fig:groebnerCellsX+1Y+1}
       \end{figure}

       However, if the valuation is trivial, the Gr\"obner cells
       with strictly positive weights are guaranteed to be convex polytopes even
       in the inhomogeneous case \cite[Section 5]{MR88}.

       The ideal $J=\langle x+z,y+z\rangle\unlhd \QQ[x^{\pm 1},y^{\pm
         1},z^{\pm 1}]$, where $\QQ$ is again endowed with the trivial
       valuation, is the homogenization of $I$. Its Gr\"obner complex
       therefore has a lineality space generated by $(1,1,1)$ and a section
       through it has the form shown in Figure
       \ref{fig:groebnerCellsX+ZY+Z}. 
       \begin{figure}[h]
         \centering
         \begin{tikzpicture}
           \fill[color=blue!20] (-2,-2) rectangle (2,2);
           \draw
           (0,0) -- (2,0)
           (0,0) -- (0,2)
           (0,0) -- (-2,-2);
           \fill (0,0) circle (2pt);
           \node[anchor=base east, font=\scriptsize] at (0,0) {$(0,0,0)$};
           \fill (1,0) circle (2pt);
           \node[anchor=north, font=\scriptsize] at (1,0) {$(1,0,0)$};
           \fill (0,1) circle (2pt);
           \node[anchor=east, font=\scriptsize] at (0,1) {$(0,1,0)$};
           \node[anchor=west] at (2,1) {$\initial_w(I) = \langle x,y\rangle$};
           \node[anchor=north] at (0.25,-2) {$\initial_w(I) = \langle x,z\rangle$};
           \node[anchor=east] at (-2,0.25) {$\initial_w(I) = \langle z,y\rangle$};
           \node[anchor=north west] (plane) at (2,-2) {$\RR^3\cap \{ w_z=1 \}$};
           \draw[->] (plane.west) to [bend left=45] (1.75,-2);
         \end{tikzpicture}\vspace{-0.5cm}%
         \caption{Gr\"obner cells of $\langle x+z,y+z\rangle$}
         \label{fig:groebnerCellsX+ZY+Z}
       \end{figure}

       The Gr\"obner complex $\Sigma(I)$ plays a central role in the
       computation of tropical varieties $\Trop_{\hspace{-0.1cm}\nu}(I)$. In fact, computing
       tropical varieties is not possible unless there is a warrant that
       $\Sigma(I)$ is a well-defined polyhedral complex, which can then be
       naturally restricted to a subcomplex on $\Trop_{\hspace{-0.1cm}\nu}(I)$ satisfying
       the conditions in the Structure Theorem
       \ref{thm:structureTheorem}. Naturally, these subcomplexes are the
       coarsest polyhedral complexes satisfying the conditions in the
       Structure Theorem \ref{thm:structureTheorem}.

       The go-to solution for computing tropical varieties of inhomogeneous
       ideals $I$ is to compute that of their homogenization $I^h$. One can
       show that
       \begin{displaymath}
         \Trop_{\hspace{-0.1cm}\nu}(I^h)\cap\{w_z=0\}=\Trop_{\hspace{-0.1cm}\nu}(I)\times\{0\}\subset\RR^{n+1},
       \end{displaymath}
       where $z$ denotes the homogenization variable.
       In our example above, this simply means
       \begin{center}\begin{tikzpicture}[descr/.style={fill=white,inner sep=2pt}]
           \matrix (m) [matrix of math nodes, row sep=1em,
           column sep=-0.25em, text height=1.5ex, text depth=0.25ex]
           { \Trop_{\hspace{-0.1cm}\nu}(I^h)& \cap & \{w_z=0\} & = & \Trop_{\hspace{-0.1cm}\nu}(I) & \times & \{0\}. \\
             (1,1,1)\cdot\RR             &      &           &   & \{(0,0)\} \\ };
           \draw[draw opacity=0]
           (m-2-1) -- node[sloped] {$=$} (m-1-1)
           (m-2-5) -- node[sloped] {$=$} (m-1-5);
       \end{tikzpicture}\end{center}
     \end{example}
   }

   \begin{definition}
     For an $x$-homogenous ideal $I\unlhd\Rtx$, i.e.~an ideal
     generated by elements which are homogeneous if considered as
     polynomials in $x$ with coefficients in $\Rt$, and a weight vector $w\in\RR_{<0}\times\RR^n$ we define its Gr\"obner cone to be
     \[ C_w(I):=\overline{\{v\in\RR_{<0}\times\RR^n\mid \initial_v(I)=\initial_w(I)\}}, \]
     where $\ov{(\cdot)}$ denotes the closure in the euclidean topology.
     We will refer to the collection $\Sigma(I):=\{C_w(I), C_w(I)\cap \{0\}\times\RR^n \mid w\in\RR_{<0}\times \RR^n\}$ as the \emph{Gr\"obner fan} of $I$.
   \end{definition}

   \begin{proposition}[\cite{MarkwigRenGroebnerFan}, Theorem 3.19]
     Let $I\unlhd\Rtx$ be an $x$-homogeneous ideal. Then all $C_w(I)$ are polyhedral cones and $\Sigma(I)$ is a finite polyehdral fan.
   \end{proposition}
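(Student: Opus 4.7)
The plan is to reduce the statement to the standard machinery of Gr\"obner/standard bases in $\Rtx$, adapted to the mixed setting where the weight on $t$ is negative but the weights on $x$ are arbitrary. The essential observation is that for $w\in\RR_{<0}\times\RR^n$ and any $x$-homogeneous $f\in\Rtx$, the initial form $\initial_w(f)$ is a polynomial in $R[t,x]$: $x$-homogeneity bounds the $x$-exponents occurring in any fixed $w$-homogeneous layer, and the negative $t$-weight then bounds the $t$-exponents as well, so only finitely many terms of the power series coefficients contribute. This legitimises speaking of $w$-leading data at all.

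First I would fix a weight-compatible ordering $>_w$ on $\Rtx$, refining $w$ lexicographically, and invoke the standard basis theory of \cite{MarkwigRenGroebnerFan} to obtain a finite $w$-standard basis $G=\{g_1,\dots,g_r\}$ of $I$ with $\initial_w(I)=\langle \initial_w(g_1),\dots,\initial_w(g_r)\rangle$. I would then describe the Gr\"obner cone as
\[
  C_w(I) \;=\; \overline{\bigl\{ v\in\RR_{<0}\times\RR^n \,:\, \initial_v(g_i)=\initial_w(g_i)\text{ for all }i\bigr\}}.
\]
The inclusion ``$\supseteq$'' is the standard basis property applied at $v$; the inclusion ``$\subseteq$'' is immediate from the definition.

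Each equality $\initial_v(g_i)=\initial_w(g_i)$ translates into finitely many linear equalities $v\cdot(\beta,\alpha)=v\cdot(\beta',\alpha')$ between monomials surviving to $\initial_w(g_i)$, together with finitely many strict linear inequalities $v\cdot(\beta,\alpha)>v\cdot(\beta'',\alpha'')$ against monomials of $g_i$ that do not survive. Intersecting these with $\RR_{\leq 0}\times\RR^n$ and taking the Euclidean closure yields a polyhedral cone, settling the first claim. For finiteness of $\Sigma(I)$, I would use noetherianity of $R[t,x]$ to bound the number of possible initial ideals $\initial_w(I)$ as $w$ varies, and then observe from the cone description above that two weight vectors inducing the same initial ideal lie in the same Gr\"obner cone, so that $\Sigma(I)$ has only finitely many maximal cones and hence finitely many cones overall.

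The main obstacle is the finite standard basis statement itself. Because $\Rt$ is neither a field nor a finitely generated algebra over one, one must verify carefully that Buchberger's $S$-pair criterion and the reduction algorithm terminate in this hybrid setup, with attention to the interaction between the $t$-adic filtration of $\Rt$ and the weight $w$; the role of $x$-homogeneity is precisely to keep the relevant modules finitely generated so that noetherianity can be brought to bear. I would take this step as a black box from the cited \cite{MarkwigRenGroebnerFan}.
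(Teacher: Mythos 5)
The paper does not prove this statement; it is quoted directly from \cite[Theorem 3.19]{MarkwigRenGroebnerFan}, so there is no in-paper argument to compare against, and your plan is broadly the expected one: bound the contributing terms of a power-series coefficient via $x$-homogeneity plus the negative $t$-weight, pass to a finite standard basis, read off the Gr\"obner cone from the basis, and count initial ideals for finiteness.

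There is, however, a genuine error in the cone description. You write
\[
C_w(I)\;=\;\overline{\bigl\{v\in\RR_{<0}\times\RR^n:\initial_v(g_i)=\initial_w(g_i)\text{ for all }i\bigr\}}
\]
for an arbitrary $w$-standard basis $G$, and assert that ``$\supseteq$'' is the standard basis property while ``$\subseteq$'' is immediate from the definition. You have the difficulty reversed. The ``$\supseteq$'' direction is indeed the standard basis property: equal initial forms of the $g_i$ force $G$ to remain a standard basis at $v$, hence $\initial_v(I)=\initial_w(I)$. But ``$\subseteq$'' is \emph{not} immediate and is in general false for an arbitrary standard basis. Equality of the ideals $\initial_v(I)=\initial_w(I)$ does not force equality of the initial forms of each individual $g_i$; a term of $g_i$ lying strictly below the $w$-level can rise to the $v$-level without changing the ideal generated, because it is absorbed by the other generators. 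This is precisely why the machinery of \emph{initially reduced} standard bases (Definition~\ref{def:initialReduction}) is introduced; Proposition~\ref{prop:GroebnerFan}, citing \cite[Propositions~3.8, 3.11]{MarkwigRenGroebnerFan}, states the cone description only under that hypothesis, and Proposition~\ref{prop:StandardBases} guarantees that such bases exist. Your plan should replace ``finite $w$-standard basis'' by ``finite initially reduced $w$-standard basis'' and drop the claim that ``$\subseteq$'' is definitional.

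The finiteness step is also thinner than it reads. Noetherianity of $R[t,x]$ alone does not bound the set $\{\initial_w(I):w\in\RR_{<0}\times\RR^n\}$; the usual argument needs a Dickson-type or compactness argument on monomial orderings, or else one shows that the Gr\"obner cones are polyhedral, have pairwise disjoint relative interiors, and cover $\RR_{\leq 0}\times\RR^n$, and deduces local and then global finiteness from there. As a proof plan this is forgivable hand-waving, but it is worth flagging as a second nontrivial input alongside the initially reduced standard basis theory that you already acknowledge as a black box.
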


   \begin{corollary}
     The map $\{-1\}\times \RR^n\stackrel\sim\longrightarrow\RR^n, (-1,w)\longmapsto w$ is compatible with the Gr\"obner fan $\Sigma(\pi^{-1}I)$ and the Gr\"obner complex $\Sigma_\nu(I)$, i.e. it maps the restriction of a Gr\"obner cone $C_{(-1,w)}(\pi^{-1}I)\cap \big(\{-1\}\times\RR^n\big)$ into a Gr\"obner polytope $C_{\nu,w}(I)$.
   \end{corollary}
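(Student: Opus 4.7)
The plan is to reduce the statement to Corollary~\ref{cor:dodgingValuations} together with a density argument based on the conic structure of the Gr\"obner cone $C_{(-1,w)}(\pi^{-1}I)$.

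The first step is to establish the following pointwise consequence of Corollary~\ref{cor:dodgingValuations}: for any $v\in\RR^n$ satisfying $\initial_{(-1,v)}(\pi^{-1}I) = \initial_{(-1,w)}(\pi^{-1}I)$, applying the reduction $\overline{(\cdot)}|_{t=1}$ to both sides and invoking the corollary on each side yields
\[
\initial_{\nu,v}(I) \;=\; \pinitial{(-1,v)}{\pi^{-1}I} \;=\; \pinitial{(-1,w)}{\pi^{-1}I} \;=\; \initial_{\nu,w}(I).
\]
In particular, $v$ lies in the open set $V_w:=\{u\in\RR^n \mid \initial_{\nu,u}(I)=\initial_{\nu,w}(I)\}$ whose closure is, by definition, the Gr\"obner polytope $C_{\nu,w}(I)$.

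Next, I would extend this open-part statement to the full Gr\"obner cone by a scaling argument. Set $U_w := \{u\in\RR_{<0}\times\RR^n \mid \initial_u(\pi^{-1}I) = \initial_{(-1,w)}(\pi^{-1}I)\}$, so that $C_{(-1,w)}(\pi^{-1}I)=\overline{U_w}$. Because initial forms depend only on which monomials attain the weighted maximum, $U_w$ is invariant under positive rescaling of the weight vector. Given any $(-1,v)\in C_{(-1,w)}(\pi^{-1}I)\cap(\{-1\}\times\RR^n)$, choose a sequence $u_n=(\alpha_n,w_n)\in U_w$ with $u_n\to(-1,v)$; for $n$ large we have $\alpha_n<0$, so $u_n/|\alpha_n|\in U_w\cap(\{-1\}\times\RR^n)$ and $u_n/|\alpha_n|\to(-1,v)$. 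By the pointwise step, the projections $w_n/|\alpha_n|$ all lie in $V_w$, and the closedness of $C_{\nu,w}(I)$ then places the limit $v$ in $C_{\nu,w}(I)$.

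The only algebraic input is Corollary~\ref{cor:dodgingValuations}, which has already been proved; the remaining work is topological bookkeeping exploiting that the Gr\"obner cones of $\pi^{-1}I$ are genuine cones under positive scaling. Accordingly, I do not expect a substantive obstacle in carrying out this argument.
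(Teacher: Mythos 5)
Your proof is correct and follows essentially the same route as the paper, which cites Proposition~\ref{prop:initial} directly; you simply make explicit the pointwise comparison of initial ideals (via Corollary~\ref{cor:dodgingValuations}) and the closure/rescaling bookkeeping that the paper leaves implicit. One small nit: since $U_w\subseteq\RR_{<0}\times\RR^n$ by definition, $\alpha_n<0$ holds for all $n$, not just for $n$ large, and the parenthetical claim that $V_w$ is open is neither needed nor justified.
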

   \begin{proof}
     Follows directly from Proposition \ref{prop:initial}.
   \end{proof}

   Note that it may very well happen that several cones are mapped
   into the same Gr\"obner polytope, i.e.~that the image of the
   restricted Gr\"obner fan is a refinement of the Gr\"obner complex
   (see Example~\ref{ex:groebnerpolytope}).

   We will now recall the notion of initially reduced standard bases of ideals in $\Rtx$ from \cite{MarkwigRenStandardBases} and how they determine the inequalities and equations of Gr\"obner cones as shown in \cite{MarkwigRenGroebnerFan}. We will then use them to decide whether two Gr\"obner cones are mapped to the same Gr\"obner polytope and, by doing so, show that no separate standard basis computation is required for it.

   \begin{definition}[initially reduced standard bases]\label{def:initialReduction}
     Fix the $t$-local lexicographical ordering $>$ such that $x_1>\ldots>x_n>1>t$.

     Given a weight vector $w\in\RR_{<0}\times\RR^n$ we define the \emph{weighted ordering} $>_w$ to be
     \begin{align*}
       t^\beta x^\alpha >_w t^\delta x^\gamma \quad :\Longleftrightarrow \quad &w\cdot (\beta,\alpha) > w\cdot (\delta,\gamma) \text{ or}\\
       & w\cdot (\beta,\alpha)=w\cdot (\delta,\gamma) \text{ and } t^\beta x^\alpha > t^\delta x^\gamma.
     \end{align*}

     For $g\in\Rtx$, the \emph{leading term} $\lt_{>_w}(g)$ is the unique term of $g$ with maximal monomial under $>_w$ and for $I\unlhd\Rtx$, the \emph{leading ideal} $\lt_{>_w}(I)$ is the ideal generated by the leading terms of all its elements. A finite subset $G\subseteq I$ is called a \emph{standard basis} of $I$ with respect to $>_w$, if the leading terms of its elements generate $\lt_{>_w}(I)$.

     Suppose $G=\{g_1,\ldots,g_k\}$ with $g_i=\sum_{\alpha\in\NN^n} g_{i,\alpha}\cdot x^\alpha$, $g_{i,\alpha} \in \Rt$. We call $G$ \emph{initially reduced}, if the set
     \begin{displaymath}
       G':=\Big\{\sum_{\alpha\in\NN} \lt_>(g_{i,\alpha})\cdot x^\alpha\;\bigmid\; i=1,\ldots,k\Big\}\subseteq R[t,x],
     \end{displaymath}
     is reduced in the classical sense.
   \end{definition}

   \begin{proposition}[{\cite[Algorithm 4.6]{MarkwigRenGroebnerFan}}]\label{prop:StandardBases}
     Let $I\unlhd\Rtx$ be an $x$-homogeneous ideal and $w\in\RR_{<0}\times\RR^n$ a weight vector.
     Then an initially reduced standard basis $G$ of $I$ with respect to $>_w$ exists.

     Moreover, if $I$ can be generated by elements in $R[t,x]$, then $G$ can be computed in finite time.
   \end{proposition}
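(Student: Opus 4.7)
The plan is to establish existence first and then address computability. Since $w$ satisfies $w_t<0$, the weighted ordering $>_w$ is a $t$-local ordering on $\Rtx$: higher powers of $t$ receive strictly smaller weight, so for every fixed $x^\alpha$ the monomial $x^\alpha$ itself dominates $t^\beta x^\alpha$ for all $\beta>0$. I would place myself in the standard basis framework for mixed orderings over power series rings in the spirit of Mora and Hironaka. Since $R$ is noetherian, so is $\Rtx$, and hence the leading ideal $\lt_{>_w}(I)$ is finitely generated; lifting any finite set of generators of $\lt_{>_w}(I)$ to elements of $I$ yields a standard basis of $I$ with respect to $>_w$.

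Next I would transform such a standard basis into an initially reduced one. Given $G=\{g_1,\ldots,g_k\}$ with $g_i=\sum_\alpha g_{i,\alpha}\,x^\alpha$, consider the associated polynomial $g_i':=\sum_\alpha \lt_>(g_{i,\alpha})\cdot x^\alpha\in R[t,x]$, which records the $>$-leading term of each $t$-series coefficient. Whenever some term of some $g_i'$ is divisible by $\lt_>(g_{j,\gamma})\cdot x^\gamma$ for an index pair $(j,\gamma)\neq(i,\alpha)$, I would subtract an appropriate $R$-multiple of $t^\beta x^\delta\cdot g_j$ from $g_i$ to cancel that term. By choice of $\beta$ and $\delta$, the modification lies in $\langle g_j\rangle\subseteq I$ and does not disturb the $>_w$-leading monomial of $g_i$, so the standard-basis property is preserved. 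Termination is then argued by noetherian induction on a well-founded invariant built from the multiset of $>$-leading monomials of the $g_{i,\alpha}$ together with the $>_w$-leading monomials of the $g_i$.

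For the computability claim, suppose $I$ is generated by finitely many elements of $R[t,x]$. The existence argument can be made effective by running a Mora-style completion for the $t$-local ordering $>_w$ on $R[t,x]$: compute $S$-polynomials, reduce using weak normal form with ecart, and iterate. Every intermediate element remains a polynomial in $t$, so no genuine power series ever appear and all operations are effective; moreover, only finitely many coefficient calculations in $R$ are needed at each step. Termination of the outer loop follows from the ascending chain condition on leading ideals, which holds for $t$-local orderings over the noetherian ring $R$. The final initial-reduction pass is then the tail-reduction procedure described above, carried out entirely inside $R[t,x]$.

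The main obstacle I anticipate is the termination of the initial-reduction step. A single tail-reduction of $g_i$ can create new contributions in coefficient slots $x^\alpha$ of smaller weighted degree, which in turn may need to be reduced, so a careless invariant can cycle. The $t$-locality of $>$ is what ultimately salvages well-foundedness, but identifying an invariant that descends under every legal reduction, and showing that this invariant interacts cleanly with the outer Mora-style completion when $I$ has polynomial generators, is where the real technical content lies.
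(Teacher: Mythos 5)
The paper does not prove this statement; it is quoted with a citation to \cite[Algorithm~4.6]{MarkwigRenGroebnerFan}, with the underlying standard-basis theory over coefficient rings developed in \cite{MarkwigRenStandardBases}, so there is no internal argument to compare against. Your sketch lands on the right high-level architecture: $>_w$ is a $t$-local ordering because the $t$-weight is negative; $R[t,x]$ is noetherian, so $\lt_{>_w}(I)$ is finitely generated by leading terms of finitely many elements of $I$, which yields a (not necessarily initially reduced) standard basis; and a Mora-style completion with ecart, confined to $R[t,x]$ when the generators are polynomials in $t$, is the right computational vehicle. Flagging termination of the initial-reduction pass as the genuine technical content is also a fair assessment.

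That said, two ring-specific issues are elided, and they are precisely where a blind attempt breaks down. First, the reduction move you describe --- subtract an appropriate $R$-multiple of $t^\beta x^\delta\cdot g_j$ to cancel an offending term --- is a field-case operation. Over the coefficient ring $R$ the coefficient to be cancelled need not be divisible by the relevant leading coefficient of $g_j$, and leading coefficients cannot simply be normalized to $1$; the notion of ``reduced in the classical sense'' for the auxiliary set $G'\subseteq R[t,x]$ therefore has to be calibrated to allow partial reduction modulo the leading-coefficient ideal, or to retain several elements sharing a leading monomial. Second, the completion loop cannot rely on $S$-polynomials and reduction alone: over a noetherian coefficient ring the analogue of Buchberger's criterion additionally requires pairs that detect syzygies among the leading coefficients, and without these the loop neither terminates at a standard basis nor certifies one. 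Both points are exactly what \cite{MarkwigRenStandardBases} and \cite{MarkwigRenGroebnerFan} work out, and both are absent from your proposal, so as written it is a plausible outline rather than a complete proof.
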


   \begin{proposition}[{\cite[Proposition 3.8, 3.11]{MarkwigRenGroebnerFan}}]\label{prop:GroebnerFan}
     Let $I\unlhd\Rtx$ be an $x$-homogeneous ideal, let $w\in\RR_{<0}\times\RR^n$ be a weight vector and
     let $G$ an initially reduced standard basis of $I$ with respect
     to $>_w$. Then the set of its initial forms $\{\initial_w(g)\mid
     g\in G\}$ is an initially reduced standard basis of
     $\initial_w(I)$ with respect to $>_w$, and the Gr\"obner cone of
     $I$ around $w$ is given by
     \[ C_w(I)=\overline{\{v\in\RR_{<0}\times\RR^n\mid \initial_v(g)=\initial_w(g) \text{ for all } g\in G\}}. \]
   \end{proposition}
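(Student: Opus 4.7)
The plan is to prove the two assertions in sequence, leveraging throughout that $>_w$ is a refinement of the partial order induced by $w$-weighted degree.

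For the standard basis claim, the starting point is that for every $h\in\Rtx$ one has $\lt_{>_w}(h)=\lt_{>_w}(\initial_w(h))$, so $\{\initial_w(g)\mid g\in G\}$ has the same set of $>_w$-leading terms as $G$. To upgrade this to the standard basis property, I would take an arbitrary $f\in\initial_w(I)$, write it as a finite $R[t,x]$-linear combination of initial forms of elements of $I$, restrict to the homogeneous layer of maximal $w$-weight (which leaves $f$ unchanged since $f$ is itself $w$-homogeneous), and use the standard basis property of $G$ in $I$ on the lifted combination to conclude that $\lt_{>_w}(f)$ is divisible by $\lt_{>_w}(\initial_w(g_i))$ for some $i$. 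Initial reducedness is inherited from $G$: the test set $G'$ of Definition~\ref{def:initialReduction} depends only on the $>$-leading terms in $\Rt$ of the coefficients $g_{i,\alpha}$, and passing from $g_i$ to $\initial_w(g_i)$ discards only strictly-lower-$w$-weight summands, leaving those leading coefficient terms untouched.

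For the description of $C_w(I)$, I would prove the two inclusions separately at the level of the open defining subsets, before taking closures. If $v$ satisfies $\initial_v(g)=\initial_w(g)$ for every $g\in G$, then since each $\initial_w(g)$ is $w$-homogeneous, the refined order $>_v$ reduces to the underlying monomial order $>$ on its support and again selects $\lt_{>_w}(g)$; hence $G$ remains a standard basis of $I$ with respect to $>_v$, and the first part gives $\initial_v(I)=\langle\initial_v(g)\mid g\in G\rangle=\langle\initial_w(g)\mid g\in G\rangle=\initial_w(I)$, placing $v$ in $C_w(I)$. Conversely, given $v$ in the interior of $C_w(I)$, so that $\initial_v(I)=\initial_w(I)$, the first part applied to $v$ shows that both $\{\initial_v(g)\mid g\in G\}$ and $\{\initial_w(g)\mid g\in G\}$ are initially reduced standard bases of the same ideal with respect to $>_w$; a uniqueness statement for such bases, analogous to uniqueness of reduced Gr\"obner bases, then forces $\initial_v(g)=\initial_w(g)$ termwise.

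The main obstacle I anticipate is this final uniqueness step in the mixed $t$-local, $x$-global setting of $\Rtx$: standard reductions there involve formal-series convergence in $t$, and one must verify that the initial-reducedness condition of Definition~\ref{def:initialReduction} really does eliminate the usual ambiguities without interfering with the $t$-adic structure. I would import this uniqueness from the framework developed in \cite{MarkwigRenStandardBases}, after which the argument reduces to the classical template from Gr\"obner fan theory transposed to weighted orderings on $\Rtx$.
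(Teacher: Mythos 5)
This proposition is cited from \cite{MarkwigRenGroebnerFan} and is not proved in the present paper, so there is no internal proof to compare against; I evaluate your sketch on its own terms. Your argument for the first claim (passage to initial forms preserves initially reduced standard bases) is a reasonable outline: $\lt_{>_w}(g)=\lt_{>_w}(\initial_w(g))$, restriction of a generating combination of an element of $\initial_w(I)$ to a single $w$-graded layer, and the observation that forming initial forms only deletes summands and hence preserves reducedness of the test set $G'$ are all correct in spirit, though the lift-and-compare step in the standard representation needs a bit more care over the mixed $t$-local/$x$-global ring $\Rtx$.

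For the cone description, both inclusions hinge on the same missing lemma: that if $\initial_v(g)=\initial_w(g)$ (hence $\lt_{>_v}(g)=\lt_{>_w}(g)$) for all $g\in G$, then $G$ is again a standard basis of $I$ with respect to $>_v$. In the forward direction you simply assert this. It is genuinely nontrivial: one only gets $\langle\lt_{>_v}(g)\mid g\in G\rangle=\lt_{>_w}(I)\subseteq\lt_{>_v}(I)$ for free, and the reverse containment is the content, where the classical Hilbert-function/Macaulay argument does not transfer painlessly to coefficients in $\Rt$. In the converse direction the structure of the argument is circular: to conclude from the first part that $\{\initial_v(g)\mid g\in G\}$ is an initially reduced standard basis of $\initial_v(I)$, you would need $G$ to already be an initially reduced standard basis of $I$ with respect to $>_v$, which is exactly what $\initial_v(g)=\initial_w(g)$ (the sought conclusion) would give. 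There is also a mismatch of orderings, since the first part applied with $v$ would yield a standard basis relative to $>_v$, not $>_w$, so the uniqueness statement you invoke ends up comparing bases for different orderings. A non-circular proof must reduce $\initial_v(g)$, or the difference $\initial_v(g)-\initial_w(g)$, directly against $\{\initial_w(g')\mid g'\in G\}$ and use initial reducedness to force it to vanish, rather than rerunning part one at $v$.
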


   We now show that our standard bases of $\pi^{-1}I\unlhd \Rtx$ yield Gr\"obner bases of initial ideals of $I\unlhd K[x]$, allowing us to immediately decide whether two Gr\"obner cones of the former are mapped to the same Gr\"obner polytope of the latter.

   \begin{corollary}\label{cor:StandardBases}
     Let $I\unlhd K[x]$ be a homogeneous ideal, let $w\in \RR^n$ be a weight vector and let $G$ be an initially reduced standard basis of $\pi^{-1}I$ with respect to the weighted ordering $>_{(-1,w)}$. Then
     \[ \left\{\overline{\initial_{(-1,w)}(g)}|_{t=1}\suchthat g\in G\right\}\]
     is a standard basis of $\initial_{\nu,w}(I)$ with respect to the fixed lexicographical ordering $>$ restricted to monomials in $x$.
   \end{corollary}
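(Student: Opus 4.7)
My plan is to build on Propositions~\ref{prop:GroebnerFan} and~\ref{prop:initial}. The former states that $G_w:=\{\initial_{(-1,w)}(g)\mid g\in G\}$ is an initially reduced standard basis of $\initial_{(-1,w)}(\pi^{-1}I)\unlhd R[t,x]$ with respect to $>_{(-1,w)}$; the latter identifies $\initial_{\nu,w}(I)$ with the image of $\initial_{(-1,w)}(\pi^{-1}I)$ under the specialisation map $\phi\colon R[t,x]\to\mathfrak K[x]$, $f\mapsto\overline f|_{t=1}$. Setting $\widetilde G:=\{\phi(F)\mid F\in G_w\}\subseteq\initial_{\nu,w}(I)$, the task is to show that $\widetilde G$ is a standard basis of $\initial_{\nu,w}(I)$ with respect to the lex ordering $>$ restricted to monomials in $x$.

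The decisive structural observation is that every $F\in G_w$ is $(-1,w)$-homogeneous of some weight $d$: writing $F=\sum c_{\beta,\alpha}\,t^\beta x^\alpha$, each appearing $x^\alpha$ uniquely determines the accompanying $t$-exponent $\beta=w\cdot\alpha-d$, so the monomials of $F$ are in bijection with their $x$-parts. Consequently $\phi(F)=\sum_\alpha\overline{c_\alpha}\,x^\alpha$ with no collision of coefficients. Because the fixed lex order has $x_1>\dots>x_n>1>t$, the leading term $\lt_{>_{(-1,w)}}(F)=c\cdot t^{\beta_0}x^{\alpha_0}$ automatically carries the lex-maximal $x$-exponent of $F$, so $\lt_>(\phi(F))=\overline c\cdot x^{\alpha_0}$ whenever $\overline c\neq 0$.

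With this compatibility at hand, for any $h\in\initial_{\nu,w}(I)$ with $\lt_>(h)=c^*x^{\alpha^*}$, I would lift $h$ to $H\in\initial_{(-1,w)}(\pi^{-1}I)$, exploit the $(-1,w)$-grading of the initial ideal to work with a homogeneous summand, and use the fact that $p=\initial_{(-1,w)}(p-t)\in\initial_{(-1,w)}(\pi^{-1}I)$ to shave off any leading terms whose $R$-coefficient lies in $\langle p\rangle$. This should yield a homogeneous lift $H'$ whose leading term has unit coefficient mod $p$ and $x$-part $\alpha^*$; Proposition~\ref{prop:GroebnerFan} then furnishes some $g\in G$ with $\lt_{>_{(-1,w)}}(\initial_{(-1,w)}(g))\mid\lt_{>_{(-1,w)}}(H')$, and the compatibility of the previous paragraph descends this to $\lt_>(\widetilde g)\mid\lt_>(h)$.

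The main obstacle I expect is ensuring that the chosen $g$ satisfies $\phi(\initial_{(-1,w)}(g))\neq 0$, i.e.\ that its leading $R$-coefficient is not in $\langle p\rangle$. Here the initially reduced hypothesis on $G$ intervenes: a $\langle p\rangle$-divisible leading coefficient of $\initial_{(-1,w)}(g)$ would be divisible by the constant $p=\lt_{>_{(-1,w)}}(\initial_{(-1,w)}(p-t))$, which already sits in the leading ideal, contradicting reducedness — apart from the harmless degenerate case $\initial_{(-1,w)}(g)=p$, where $\widetilde g=0$ and the divisibility is inherited by another basis element.
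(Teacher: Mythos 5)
Your proposal shares the paper's point of departure --- Propositions~\ref{prop:GroebnerFan} and~\ref{prop:initial} together with the observation that each $\initial_{(-1,w)}(g)$ is $(-1,w)$-homogeneous, so its $t$-exponents are functions of its $x$-exponents and its leading $x$-exponent survives the specialisation $\phi\colon F\mapsto\overline{F|_{t=1}}$. But the step you describe as ``lift $h$ to $H$, pass to a homogeneous summand, and shave off $\langle p\rangle$-divisible leading terms'' has a genuine gap. Write a lift $H\in\initial_{(-1,w)}(\pi^{-1}I)$ of $h$ as the sum of its $(-1,w)$-homogeneous pieces $H_{d_1},\dots,H_{d_k}$. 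The map $\phi$ does not respect this grading: the images $\phi(H_{d_i})$ all land in the ungraded ring $\mathfrak K[x]$, may share $x$-monomials, and can cancel in $h=\sum_i\phi(H_{d_i})$. Concretely, the lex-maximal $x$-exponent occurring in $H$ can exceed $\alpha^*$ while the $R$-coefficients sitting on that $x$-monomial are individually units modulo $p$ --- only their sum across $t$-levels lies in $\langle p\rangle$. Shaving by $p\in\initial_{(-1,w)}(\pi^{-1}I)$ removes a single term and only if that term's own coefficient is divisible by $p$, so it cannot lower the leading $x$-exponent of $H$ to $\alpha^*$. Your outline therefore presupposes that $t\mapsto 1$ commutes with forming leading ideals, which is exactly the nontrivial content here.

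The paper's proof sidesteps this by chaining three standard-basis-preserving reductions, each established structurally rather than term by term. First, since $\initial_{(-1,w)}(G)$ and the ideal it generates are $(-1,w)$-homogeneous, being a standard basis for $>_{(-1,w)}$ is equivalent to being one for the plain tiebreaker $>$, as the two orderings agree within each weight-graded piece. Second, because $t$ is the $>$-smallest variable, setting $t=1$ carries this standard basis to a standard basis of $\initial_{(-1,w)}(\pi^{-1}I)|_{t=1}\unlhd R[x]$ with respect to $>$ restricted to $\Mon(x)$ --- this is the step that absorbs the cancellation phenomenon above. Third, as $p$ lies in the resulting set, reducing modulo $p$ yields a standard basis of $\pinitial{(-1,w)}{\pi^{-1}I}=\initial_{\nu,w}(I)$. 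Your closing remark on initial reducedness is a sensible way to control the degenerate case $\phi(\initial_{(-1,w)}(g))=0$, but the middle step requires the dehomogenisation argument, not the shave-by-$p$ one.
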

   \begin{proof}
     By Proposition~\ref{prop:GroebnerFan}, the set $\initial_{(-1,w)}(G):=\{\initial_{(-1,w)}(g)\mid g\in G\}$ is an initially reduced standard basis of $\initial_{(-1,w)}(\pi^{-1}I)$ with respect to $>_{(-1,w)}$. And because it is homogeneous with respect to weight vector $(-1,w)$, it is also an initially reduced standard basis with respect to $>$. By choice of $>$, the set $\initial_{(-1,w)}(G)|_{t=1}$ remains a standard basis of $\initial_{(-1,w)}(\pi^{-1}I)|_{t=1}$ with respect to the restriction of $>$ to monomials in $x$. And since $p\in \initial_{(-1,w)}(G)|_{t=1}$, $\pinitial{(-1,w)}{G}$ is a standard basis of $\pinitial{(-1,w)}{\pi^{-1}I}$ with respect to the restriction of $>$.
   \end{proof}

   \begin{example}\label{ex:groebnerpolytope}
     Consider the preimage $\pi^{-1}I\unlhd\ZZ\llbracket t\rrbracket[x,y,z]$ of the ideal $I=\langle 2y+x,z^2+y^2\rangle\unlhd\QQ_2[x,y,z]$ and the two weight vectors $w=(1,3,7), v=(1,10,5)\in\RR^3$. Fix a lexicographical tiebreaker $>$ with $x>y>z>1>t$.

     The initially reduced standard basis of $\pi^{-1}I$ under $>_{(-1,w)}$ and $>_{(-1,v)}$ are the following two sets respectively (initial forms underlined):
     \begin{align*}
       G_{(-1,w)} =\{\underline 2-t,\underline{ty}+x,\underline{z^2}+y^2\}, \;
       G_{(-1,v)} =\{\underline 2-t,\underline{ty}+x, \underline{xy}-tz^2, \underline{t^2z^2}+x^2, \underline{y^2}+z^2\},
     \end{align*}
     yielding the following Gr\"obner basis of $\initial_{\nu,w}(I)$ and $\initial_{\nu,v}(I)$ under $>$:
     \begin{align*}
       \mathcal G_w=\{y, z^2\}, \quad \mathcal G_v=\{y,xy,z^2,y^2\}.
     \end{align*}
     One immediately sees that both initial ideals coincide, meaning that the two Gr\"obner cones $C_{(-1,w)}(\pi^{-1}I)$ and $C_{(-1,v)}(\pi^{-1}I)$ are mapped to the same Gr\"obner polytope $C_{\nu_2,w}(I)=C_{\nu_2,v}(I)$.
   \end{example}

   \begin{remark}[homogenization and dehomogenization]\label{rem:homogenization}
     A lot of effort has been put into developing algorithms for
     computing Gr\"obner cones $C_w(I)$ for $x$-homogeneous ideals
     $I\unlhd\Rtx$ and weight vectors $w\in\RR_{<0}\times\RR^n$ in
     \cite{MarkwigRenGroebnerFan} which terminate in finite time in
     case $I$ can be generated by elements in $R[t,x]$,  avoiding the
     necessity to use homogenization and dehomogenization techniques
     as described in \cite[Lemma 1.1]{BJSST07}, which are known to
     refine the Gr\"obner fan structure in general.

     The most prominent phenomenon showing the refinement is the
     non-regular Gr\"obner fan in \cite[Theorem 1]{Jensen07b}. Note
     that Gr\"obner fans of homogeneous ideals are known to be
     regular, as they are the normal fans of the state polytopes
     \cite[Theorem 2.5]{Sturmfels96}. The non-regular Gr\"obner fan
     $\Sigma(I)$ arises from the inhomogeneous ideal
     \[ I:=\langle x_1x_3x_4+x_1^2x_3-x_1x_2,x_1x_4^2-x_3,x_1x_4^4+x_1x_3 \rangle \unlhd\QQ[x_1,\ldots,x_4] \]
     and hence is restricted to the positive orthant $\RR^4_{\geq
       0}$. However, once homogenized it yields a regular Gr\"obner
     fan $\Sigma(I^h)$ living in $\RR^5$, whose restriction to
     $\{0\}\times\RR^4_{\geq 0}$ refines $\Sigma(I)$.
   \end{remark}

   \begin{remark}[$p$-adic Gr\"obner bases]\label{rem:PadicGroebnerBases}
     A Gr\"obner basis of an ideal $I\unlhd K[x]$ over valued fields with respect to a
     weight vector $w\in\RR^n$ is by \cite[Section
     2.4]{MS15} a finite generating set whose initial forms generate
     the initial ideal $\initial_{\nu,w}(I)$.
     Observe that Corollary~\ref{cor:StandardBases} implies that such a
     Gr\"obner basis can be computed by projecting an initially reduced standard basis
     of $\pi^{-1}I\unlhd\Rtx$ under the monomial ordering $>_w$ via
     $\pi$ to $K[x]$.

     Figure~\ref{fig:StandardBasesTimings} shows timings of the \textsc{Macaulay2} Package \texttt{GroebnerValuations} from Andrew Chan \cite{M2,CM13}, a toy-implementation of a $p$-adic Matrix-F5 algorithm by Tristan Vaccon in \textsc{Sage} \cite{sagemath,Vaccon2014} and the standard basis engine of \textsc{Singular} over integers under mixed orderings \cite{DGPS16}. The examples are:
     \begin{description}[leftmargin=*]
     \item[Cyclic(n)] In $\QQ_2[x_0,\ldots,x_n]$, the cyclic ideal in the variables $x_1,\ldots,x_n$, homogenized using the variable $x_0$, and weight vector $(1,\ldots,1)$.
     \item[Katsura(n)] In $\QQ_2[x_0,\ldots,x_n]$, the Katsura ideal in the variables $x_1,\ldots,x_n$, homogenized using the variable $x_0$, and weight vector $(1,\ldots,1)$.
     \item[Chan] In $\QQ_3[x_0,\ldots,x_n]$, the ideal $\langle 2x_1^2+3x_1x_2+24x_3x_4, 8x_1^3+x_2x_3x_4+18x_3^2x_4\rangle$ and weight vector $(-1,-11,-3,-19)$ taken from \cite[Chapter 3.6]{Chan13}.
     \end{description}
     All computations were aborted after exceeding either 1 CPU day or 16 GB memory.
     Note that the computations in \textsc{Sage} were done up to a finite precision of $p^{50}$ and that the correctness of the result could only be verified for the examples for which either \textsc{Macaulay2} or \textsc{Singular} finished.
     \begin{figure}[h]
       \centering
       \begin{tabular}{l|c|c|c}%
         Examples    & \textsc{Macaulay2} & \textsc{Sage} & \textsc{Singular} \\ \hline
         Cyclic(4)   & 1                  & 10            & 1 \\
         Cyclic(5)   & -                  & -             & 1 \\
         Cyclic(6)   & -                  & -             & - \\
         Katsura(3)  & 1                  & 1             & 1 \\
         Katsura(4)  & -                  & 10            & 1 \\
         Katsura(5)  & -                  & 190           & 1 \\
         Katsura(6)  & -                  & 2900          & - \\
         Chan        & 1                  & 4             & - \\
       \end{tabular}\vspace{-0.25cm}
       \caption{Timings in seconds unless aborted}
       \label{fig:StandardBasesTimings}
     \end{figure}
   \end{remark}


   \section{Computation of tropical varieties}\label{sec:computation}

   In this section, we present an algorithm for computing the tropical variety of an $x$-homogeneous ideal $I\unlhd \Rtx$, provided it is pure and connected in codimension one, as is the case for all preimages of ideals in $K[x]$ under $\pi$.
   All algorithms in this section are straight-forward modification of the techniques developed by Bogart, Jensen, Speyer, Sturmfels and Thomas for tropical varieties of homogeneous polynomial ideals over ground fields with trivial valuation, which is why proofs are omitted and instead references to \cite{BJSST07} are added.

   Before we begin, we quickly note that the computation of tropical hypersurfaces is simple:
   \begin{algorithm}[TropHypersurface, {\cite[Algorithm 4.3]{BJSST07}}]\label{alg:tropElement}\
     \begin{algorithmic}[1]
       \REQUIRE{$g=\sum_{\beta,\alpha} c_{\alpha,\beta} \cdot t^\beta x^\alpha$, $g\neq 0$.}
       \ENSURE{$\Delta$, collection of maximal dimensional cones in $\RR_{\leq 0}\times\RR^n$ such that
         \begin{center}
           $\Trop(g):=\Trop(\langle g\rangle)=\textstyle\bigcup_{\sigma\in\Delta} \sigma.$
         \end{center}}
       \STATE Construct the finite set of exponent vectors with minimal entry in $t$,
       \begin{center}
         $\Lambda:=\left\{(\beta,\alpha)\in\NN\times\NN^n\suchthat \begin{array}{c} \alpha \in \NN^n \text{ with } c_{\alpha,\beta'}\neq 0 \text{ for some } \beta'\in\NN \\
             \beta = \min \{\beta'\in\NN\mid c_{\alpha,\beta'}\neq 0\} \end{array} \right\}.$
       \end{center}
       \STATE Construct the normal fan of its convex hull
       \begin{center}
         $\Delta:=\text{NormalFan}(\Conv(\Lambda)).$
       \end{center}
       \RETURN{$\{\sigma\in \Delta\mid \sigma \cap \RR_{<0}\times\RR^n\neq\emptyset \text{ and }\dim(\sigma)=n\}$.}
     \end{algorithmic}
   \end{algorithm}

   The computation of general tropical varieties on the other hand
   works in three steps:
   \begin{enumerate}[leftmargin=*]
   \item Finding a first maximal Gr\"obner cone $C_w(I)\subseteq\Trop(I)$, Alg.~\ref{alg:tropStartingCone}.
   \item Given $C_u(I)\subseteq\Trop(I)$ of codimension one, describe $\Trop(I)$ around $C_u(I)$, Alg.~\ref{alg:tropCurve}.
   \item Given $C_w(I)\subseteq\Trop(I)$ maximal, compute an adjacent $C_v(I)\subseteq\Trop(I)$, Alg.~\ref{alg:flip}.
   \end{enumerate}
   of which (3) is a generalisation of the well-known \textit{flip} of Gr\"obner bases, which we will simply cite from \cite{MarkwigRenGroebnerFan} without going into any algorithmic details:

   \pagebreak
   \begin{algorithm}[Flip, {\cite[Algorithm 5.5]{MarkwigRenGroebnerFan}}]\label{alg:flip}\
     \begin{algorithmic}[1]
       \REQUIRE{$(G,H,v,>_w)$, where
         \begin{itemize}[leftmargin=*, itemsep=0pt]
         \item $>_w$ a weighted monomial ordering with weight vector $w\in\RR_{<0}\times\RR^n$,
         \item $v$ an outer normal vector of $C_w(I)$,
         \item $G=\{g_1,\ldots,g_k\}\subseteq I$ an initially reduced standard basis of an $x$-homogeneous ideal $I$ w.r.t.~$>_w$,
         \item $H=\{h_1,\ldots,h_k\}$ with $h_i=\initial_w(g_i)$.
         \end{itemize}}
       \ENSURE{$(G',>_{w'})$, where
         \begin{itemize}[leftmargin=*]
         \item $C_{w'}(I)$ adjacent to $C_w(I)$ in direction $v$, i.e. $C_{w'}(I)=C_{w+\varepsilon\cdot v}(I)$ for $\varepsilon>0$ sufficiently small,
         \item $G'\subseteq I$ an initially reduced standard basis w.r.t.~$>_{w'}$.
         \end{itemize}}
     \end{algorithmic}
   \end{algorithm}

   To show how to find a first maximal dimensional Gr\"obner cone on
   $\Trop(I)$, we need to introduce the homogeneity space, since the
   starting cone algorithm works inductively over the codimension of
   it, and we have to recall the lift of standard bases, which we will again cite
   from \cite{MarkwigRenGroebnerFan} without going into any
   algorithmic details. The latter allows us to lift a standard basis
   of an initial ideal into a standard basis of the original ideal,
   useful for avoiding unnecessary standard basis computations.

   \begin{definition}[homogeneity space]\label{def:homogeneitySpace}
     Given an $x$-homogeneous ideal $I\unlhd \Rtx$, we define the \emph{homogeneity space} of $I$ (or of $\Trop(I)$) to be the intersection of all its lower Gr\"obner cones, i.e. Gr\"obner cones of the form $C_w(I)$ for some $w\in\RR_{<0}\times\RR^n$,
     \begin{displaymath}
       C_0(I):= \bigcap_{w\in\RR_{<0}\times\RR^n} C_w(I).
     \end{displaymath}
   \end{definition}

   \begin{example}\label{ex:homogeneitySpace}
     Note that our definition of homogeneity space $C_0(I)$ differs from the natural lineality space $C_0(I)$ of tropical varieties over fields with trivial valuation. In general, our $C_0(I)$ is neither a linear subspace nor is it the set of all vectors with respect to whom the ideal is weighted homogeneous. Consider the principal ideal
     \begin{displaymath}
       I=\langle (1+t)\cdot x+(1+t)\cdot y\rangle\unlhd\ZZ\llbracket t\rrbracket[x,y],
     \end{displaymath}
     whose Gr\"obner Fan splits the weight space $\RR_{\leq 0}\times\RR^2$ into two maximal cones, see Figure \ref{fig:homogeneitySpace}, and whose homogeneity space is given by
     \begin{displaymath}
       C_0(I)=\{(w_t,w_x,w_y)\in\RR_{\leq 0}\times\RR^n\mid w_x=w_y\}.
     \end{displaymath}
     Clearly, $C_0(I)$ is no subspace and we have $(-1,0,0)\in C_0(I)$ despite the ideal not being weighted homogeneous with respect to it. This effect is caused by the terms $tx$ and $ty$ in the generator, which do not appear in any initial form and hence have no influence on $C_0(I)$, yet still exist and thus prevent $I$ from being weighted homogeneous with respect to any weight vector in the interior of $C_0(I)$.
     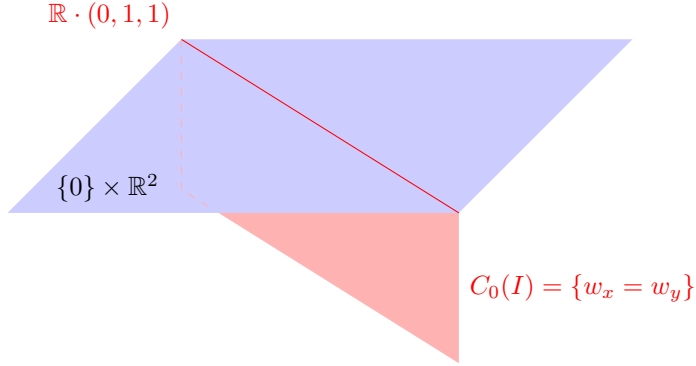
\begin{figure}[h]
       \centering
       \begin{tikzpicture}
         \fill[red!30] (-3,0,-3) -- (3,0,3) -- (3,-2,3) -- (-3,-2,-3) -- cycle;
         \node[anchor=west, font=\footnotesize, red] at (3,-1,3) {$C_0(I)=\{w_x=w_y\}$};
         \fill[blue!20] (-3,0,-3) -- (3,0,-3) -- (3,0,3) -- (-3,0,3) -- cycle;
         \node[anchor=south west, font=\footnotesize, xshift=0.5cm] at (-3,0,3) {$\{0\}\times\RR^2$};
         \draw[red] (-3,0,-3) -- (3,0,3);
         \draw[red!30, dashed] (-3,0,-3) -- (-3,-2,-3) -- (-2.3,-2,-2.3);
         \node[anchor=south east, font=\footnotesize, red] at (-3,0,-3) {$\RR\cdot (0,1,1)$};
       \end{tikzpicture}\vspace{-0.5cm}%
       \caption{$C_0(\langle(1+t)\cdot x+(1+t)\cdot y\rangle)$}
       \label{fig:homogeneitySpace}
     \end{figure}
   \end{example}

   We follow up our observation in Example~\ref{ex:homogeneitySpace} with the following Lemma, which shows that the homogeneity space behaves properly in the case which is of interest to us:

   \begin{lemma}\label{lem:homogeneitySpace}
     Let $I\unlhd\Rtx$ be an $x$-homogeneous ideal and $w\in\RR_{<0}\times\RR^n$ a weight vector. Then
     \begin{displaymath}
       C_0(\initial_w(I))= \overline{\{v\in\RR_{<0}\times\RR^n \mid \initial_v\initial_w(I)=\initial_w(I)\}} = \Lin(C_w(I))\cap (\RR_{\leq0}\times\RR^n).
     \end{displaymath}
   \end{lemma}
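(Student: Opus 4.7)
The plan is to work with an initially reduced standard basis $G$ of $I$ with respect to $>_w$ and its image $H := \{\initial_w(g) \mid g \in G\}$. By Proposition \ref{prop:GroebnerFan}, $H$ is an initially reduced standard basis of $\initial_w(I)$ with respect to $>_w$; every $h \in H$ is $w$-weighted homogeneous so $\initial_w(h) = h$, and consequently
\[ C_w(\initial_w(I)) = \overline{\{v \in \RR_{<0}\times\RR^n \mid \initial_v(h) = h \text{ for all } h \in H\}}. \]
Since $\initial_w(\initial_w(I)) = \initial_w(I)$, the middle expression of the lemma is literally the Gr\"obner cone $C_w(\initial_w(I))$.

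The next step is to identify $C_w(\initial_w(I))$ with $\Lin(C_w(I)) \cap (\RR_{\leq 0}\times\RR^n)$. Proposition \ref{prop:GroebnerFan} also describes $C_w(I)$ as the closure of the locus in $\RR_{<0}\times\RR^n$ defined by the linear equalities $v \cdot ((\beta,\alpha)-(\beta',\alpha')) = 0$ for pairs of exponents appearing jointly in some $\initial_w(g)$, together with strict inequalities separating the remaining monomials of each $g$. The linear span $\Lin(C_w(I))$ is therefore cut out by the equalities alone and coincides with the subspace of $v \in \RR^{n+1}$ for which every $h \in H$ is $v$-weighted homogeneous. Since $w \in C_w(I) \subseteq \Lin(C_w(I))$ has $w_t < 0$, the translates $v + \varepsilon w$ show that $\Lin(C_w(I)) \cap (\RR_{<0}\times\RR^n)$ is dense in $\Lin(C_w(I)) \cap (\RR_{\leq 0}\times\RR^n)$, and comparing with the preceding description of $C_w(\initial_w(I))$ yields the desired equality.

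Finally I would establish $C_0(\initial_w(I)) = C_w(\initial_w(I))$. The inclusion $\subseteq$ is immediate from the definition of the homogeneity space, so it remains to show that each $v$ in the relative interior of $C_w(\initial_w(I))$ lies in $C_u(\initial_w(I))$ for every $u \in \RR_{<0}\times\RR^n$. Such a $v$ satisfies $\initial_v(\initial_w(I)) = \initial_w(I)$, making $\initial_w(I)$ a $v$-weighted homogeneous ideal. The crux is to produce a standard basis $\tilde H$ of $\initial_w(I)$ with respect to $>_u$ whose elements are all $v$-weighted homogeneous: one starts from a $v$-weighted homogeneous generating set and observes that both S-polynomial formation and reduction in the standard basis procedure of \cite{MarkwigRenStandardBases} preserve $v$-weighted homogeneity, since the weight $v$ does not enter the ordering $>_u$. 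For any $\tilde h \in \tilde H$ of $v$-degree $d$ and any $s \in \RR$, every monomial $m$ of $\tilde h$ satisfies $(u+sv) \cdot m = u \cdot m + sd$, hence $\initial_{u+sv}(\tilde h) = \initial_u(\tilde h)$, and so $\initial_{u+sv}(\initial_w(I)) = \initial_u(\initial_w(I))$ whenever $u + sv \in \RR_{<0}\times\RR^n$. Scaling $\tfrac{1}{s}(u+sv) \to v$ inside the closed cone $C_u(\initial_w(I))$ then gives $v \in C_u(\initial_w(I))$, and closing up finishes the proof. The main obstacle is precisely the production of this $v$-weighted homogeneous $\tilde H$, i.e. verifying that the Mora-type reductions defining initially reduced standard bases over $\Rtx$ respect a $v$-weighted grading that is independent of the defining ordering; this is the direct analogue of the classical fact that Buchberger's algorithm preserves weighted homogeneity.
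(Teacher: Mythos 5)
Your proposal takes a genuinely different route from the paper. The paper's proof is short: both equalities are consequences of the perturbation lemma $\initial_u(\initial_v(J)) = \initial_{v+\varepsilon u}(J)$ for $\varepsilon>0$ small (cited there from \cite{MarkwigRenGroebnerFan}, Proposition~5.4), applied once to get the second equality and once more, combined with the closedness of $C_u(\initial_w(I))$, to get the nontrivial inclusion $\supseteq$ of the first. You instead work directly from the description of Gr\"obner cones through initially reduced standard bases (Proposition~\ref{prop:GroebnerFan}): identifying the middle set with $C_w(\initial_w(I))$ is immediate from $\initial_w\initial_w = \initial_w$, and the second equality follows cleanly from reading off the linear equalities cutting out $\Lin(C_w(I))$. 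That part is correct and a reasonable alternative, though slightly longer.

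For the first equality, however, your argument re-derives the essential content of the perturbation lemma from scratch, and you correctly flag where the real work lies: producing a $v$-weighted homogeneous standard basis of $\initial_w(I)$ with respect to an arbitrary ordering $>_u$, relying on the assertion that the Mora-type standard basis procedure over $\Rtx$ preserves a $v$-weighted grading not encoded in the ordering. This is plausible (it is the mixed-ring analogue of the classical fact that Buchberger's algorithm preserves multigradings), but it is not proved in the paper and not a freely available citation in this form; without it your Step~C $\supseteq$ does not close. The paper sidesteps exactly this by invoking the perturbation lemma as a black box. So: two of your three steps are sound and offer a more concrete, standard-basis-level explanation of what $\Lin(C_w(I))$ is; the third step has an honest, acknowledged gap that the paper's proof avoids entirely. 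If you want to keep your route, the cleanest fix is to replace the homogeneous-standard-basis construction with the perturbation identity $\initial_{v+\varepsilon u}(\initial_w(I)) = \initial_u(\initial_v\initial_w(I)) = \initial_u(\initial_w(I))$, which yields $v+\varepsilon u \in C_u(\initial_w(I))$ directly.
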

   \begin{proof}
     The second equality follows directly from the perturbation of initial ideals, i.e. it follows from the fact that for any $v\in\RR_{<0}\times\RR^n$ we have $\initial_v\initial_w(I)=\initial_{w+\varepsilon\cdot v}(I)$ for $\varepsilon>0$ sufficiently small \cite[Proposition 5.4]{MarkwigRenGroebnerFan}. It remains to show the first equality.

     The $\supseteq$ inclusion can be shown in a similar fashion: Suppose $v\in\RR_{<0}\times\RR^n$ such that $\initial_v(\initial_w(I))=\initial_w(I)$. Then for any $u\in \RR_{<0}\times\RR^n$ we have
     \[ \initial_{v+\varepsilon \cdot u} (\initial_w(I)) = \initial_u(\initial_v(\initial_w(I)))=\initial_u(\initial_w(I)),\]
     showing that $v+\varepsilon\cdot u\in C_u(\initial_w(I))$ for any $\varepsilon>0$ sufficiently small. As $C_u(\initial_w(I))$ is closed by definition, this implies $v\in C_u(\initial_w(I))$. This shows that $v$ is contained in every lower Gr\"obner cone of $\initial_w(I)$, and hence also in their intersection $C_0(\initial_w(I))$.

     For the $\subseteq$ inclusion, consider $v\in C_0(\initial_w(I))\cap(\RR_{<0}\times\RR^n)$, so that $v\in C_u(\initial_w(I))$ for all $u\in\RR_{<0}\times\RR^n$. In particular, $v\in C_w(\initial_w(I))$ which is the middle set by definition.
   \end{proof}

   \pagebreak
   \begin{algorithm}[Lift, {\cite[Algorithm 5.2]{MarkwigRenGroebnerFan}}]\label{alg:lift}\
     \begin{algorithmic}[1]
       \REQUIRE{$(H',>',H,G,>)$, where
         \begin{itemize}[leftmargin=*, itemsep=0pt]
         \item $>$ a weighted $t$-local monomial ordering on $\Mon(t,x)$ with weight vector in $\RR_{<0}\times\RR^n$,
         \item $G=\{g_1,\ldots,g_k\}\subseteq I$ an initially reduced standard basis of an $x$-homogeneous ideal $I$ w.r.t.~$>$,
         \item $H=\{h_1,\ldots,h_k\}$ with $h_i=\initial_w(g_i)$ for
           some $w\in C_>(I)$ with $w_0<0$,
         \item $>'$ a $t$-local monomial ordering such that $w\in C_>(I)\cap C_{>'}(I)$,
         \item $H'\subseteq \initial_w(I)$ a weighted homogeneous standard basis w.r.t.~$>'$.
         \end{itemize}}
       \ENSURE{$G'\subseteq I$, an initially reduced standard basis of $I$ w.r.t.~$>'$.}
     \end{algorithmic}
   \end{algorithm}

   \begin{algorithm}[TropStartingCone, {\cite[Algorithm 4.12]{BJSST07}}]\label{alg:tropStartingCone}\
     \begin{algorithmic}[1]
       \REQUIRE{$(G,>_w)$, where $G$ is an initially reduced standard basis of an $x$-homogeneous ideal $I$ with respect a weighted ordering $>_w$, $w\in\RR_{<0}\times\RR^n$.}
       \ENSURE{$(C_{w'}(I),G',>_{w'})$, where $C_{w'}(I)\subseteq\Trop(I)$ maximal dimensional and $G'$ an initially reduced standard basis of $I$ with respect to the weighted ordering $>_{w'}$.}
       \STATE \textbf{if} $\dim(I)=\dim(C_0(I))$ \textbf{then return} $(C_0(I),G,>)$
       \STATE Find a weight vector $w\in(\Trop(I)\setminus C_0(I))\cap(\RR_{<0}\times\RR^n)$.
       \STATE Compute an initially reduced standard basis $G''$ of $I$ with respect to $>_w$.
       \STATE Set $H'':=\{\initial_w(g)\mid g\in G''\}$.
       \STATE Rerun
       \begin{center}
         $(C_{w_0'}(I),G_0',>_0')=\text{TropStartingCone}(H'',>_w).$
       \end{center}
       \STATE Let $>'$ be the weighted ordering with weight vector $w$ and tiebreaker $>_0'$.
       \STATE Lift $G_0'$ to an initially reduced standard basis $G'$ of $I$:
       \begin{center}
         $G'=\text{Lift}(G_0',>',H'',G'',>_w).$
       \end{center}
       \STATE Construct the corresponding Gr\"obner cone $C_{w'}(I) := C(H',G',>')$.
       \RETURN{$(C_{w'}(I),G',>')$}
     \end{algorithmic}
   \end{algorithm}
   \lang
   {
   \begin{proof}
     Labeling all the objects appearing in the $\nu$-th recursion step by a subscript $\nu$ we have
     \begin{displaymath}
       \dim C_0(I_0) < \dim C_0(I_1) < \dim C_0(I_2) < \ldots,
     \end{displaymath}
     as $w_\nu\notin C_0(I_{\nu})$ yet $ w_\nu\in C_0(I_{\nu+1})$ for all
     $\nu$. And since all $\dim C_0(I_\nu)$ are strictly bounded from
     above by the dimension of $\RR_{\leq 0}\times\RR^n$, the recursions
     stop eventually and our algorithm terminates.

     For the correctness of the output make an induction on the number of
     recursions. If there are no recursions, i.e. $\dim(I)=\dim(C_0(I))$,
     then $(G,G,>)$ obviously satisfies the conditions (1), (2) and (3).

     If recursions do happen, then, by induction hypothesis, the output
     \linebreak $(H',G_0',>_0')$ in Step $7$ is correct. Setting
     $J:=\initial_w(I)$, this means:
     \begin{enumerate}[leftmargin=*,label=(\roman*)]
     \item $>'_0$ is a $t$-local monomial ordering with $C_{w'_0}(J)\leq
       C_{>'_0}(J)$ for a maximal Gr\"obner cone
       $C_{w'_0}(J)\subseteq\Trop(J)$,
     \item $G_0'\subseteq J$ is an initially reduced standard basis with respect to $>'_0$,
     \item $H'=\{\initial_{w'_0}(g)\mid g\in G_0'\}\subseteq \initial_{w_0'}(J)$.
     \end{enumerate}

     Recall that by Proposition \ref{prop:PertubationOfInitialIdeal}, we
     have $\initial_v(J)=\initial_{w+\varepsilon\cdot v}(I)$ for
     $\varepsilon>0$ sufficiently small.
     Setting $w':=w+\varepsilon \cdot w'_0$ for $\varepsilon>0$ sufficiently small, this implies that $C_{w'}(I)\subseteq \Trop(I)$ and
     \begin{displaymath}
       \dim C_{w'}(I)=\dim C_{w'_0}(J) \overset{(\text{i})}{=} \dim \Trop(J) = \dim \Trop(I).
     \end{displaymath}
     Moreover, because we have set $>'$ to be a weighted ordering with
     weight vector $w'$, we have $C_{w'}(I)\leq C_{>'}(I)$ and condition
     (1) on the $t$-local ordering $>'$ is fulfilled.

     The remaining conditions (2) and (3) now follow directly from the
     correctness of our lifting algorithm, which we may use by
     construction of $G''$ and $H''$ and because of $w\in C_{>'}(I)\cap
     C_{>_w}(I)$:
     \begin{enumerate}[leftmargin=*,label=(\alph*)]
     \item $G'$ is an initially reduced standard basis with respect to $>'$,
     \item $G_0'=\{\initial_w(g)\mid g\in G'\}$.
     \end{enumerate}
     Condition (2) is equivalent to condition (a), and condition (3) follows from
     \begin{displaymath}
       H'\overset{(\text{iii})}{=}\{\initial_{w_0'}(g)\mid g\in G_0'\} \overset{(\text{b})}{=}\{\initial_{w_0'}\initial_w(g)\mid g\in G'\}
     \end{displaymath}
     and $w':=w+\varepsilon \cdot w'_0$.
   \end{proof}
   }

   \lang
   {
     \begin{remark}\label{rem:findingTropicalPoints}
       In Step $3$ of the previous algorithm, it is necessary to find a
       non-trivial point $w$ in the tropical variety. This can be achieved
       by traversing the Gr\"obner fan as in Algorithm
       \ref{alg:groebnerFan} and checking all Gr\"obner cones whether they
       have a ray that is contained inside the tropical variety.

       Due to the repeated transition to initial ideals, see Steps $6$ and
       $7$, the ideal and hence its Gr\"obner fan become simpler in each
       iteration. For example, the dimension of the homogeneity space is
       strictly increasing, as $w\notin C_0(I)$ but $w\in
       C_0(\initial_w(I))$.
     \end{remark}
   }

   \begin{example}\label{ex:ChanStartingCone}
     Let $I\unlhd \ZZ\llbracket t\rrbracket[x_1,\ldots,x_4]$ be the preimage from Example~\ref{ex:Chan},
     \[ I=\langle 3-t,2x_1^2+3x_1x_2+24x_3x_4, 8x_1^3+x_2x_3x_4+18x_3^2x_4\rangle.\]
     A short calculation reveals that $\dim(\Trop(I))=\dim(I)=3>1=\dim(C_0(I))$ with
     \begin{displaymath}
       C_0(I) = \RR\cdot (0,1,1,1,1).
     \end{displaymath}
     Picking $w:=(-2,-1,1,5,-5)\in\Trop(I)$, the initial ideal $\initial_w(I)$
     is generated by
     \begin{displaymath}
       \{3, \quad tx_3x_4-tx_1x_2+x_1^2, \quad tx_1x_2^2-x_1^2x_2-t^3x_1x_2x_3+t^2x_1^2x_3\}.
     \end{displaymath}
     Another short calculation reveals $\dim(\initial_w(I))=3>2=\dim(C_0(\initial_w(I)))$ with
     \begin{displaymath}
       C_0(\initial_w(I)) = \RR_{\geq 0}\cdot w + \RR\cdot (0,1,1,1,1).
     \end{displaymath}
     Picking $v:=(6,-11,11,-1,1)\in\Trop(I)$, the initial ideal $\initial_v\initial_w(I)$ is generated by
     \begin{displaymath}
       \{3, \quad tx_3x_4-tx_1x_2, \quad  x_2x_3x_4-t^2x_1x_2x_3 \}.
     \end{displaymath}
     And since $\dim(\initial_v \initial_w(I))=3=\dim(C_0(\initial_v \initial_w(I)))$ with
     \begin{displaymath}
       C_0(\initial_v \initial_w(I)) = \RR_{\geq 0}\cdot (-1,1,-1,1,-1) + \RR \cdot (0,1,0,0,1) + \RR\cdot (0,0,1,1,0),
     \end{displaymath}
     the recursions end.

     \begin{figure}[h]
       \centering
       \begin{tikzpicture}
         \fill[pattern=senwStripes,pattern color=blue!20] (0,0) -- (0,2) -- (4,1.33) -- cycle;
         \fill[blue!20] (0,0) -- (0,2) -- (3,1.5) -- cycle;
         \fill (0,0) circle (2pt);
         \draw[->] (0,0) -- (0,2);

         \node[anchor=north west, font=\small, xshift=-0.65cm] at (0,0) {$C_0(I)$};
         \draw[red,very thick,->] (0,0.35) -- node [red,right] {$w$} (0,1.1);
         \draw[red,very thick,->] (0,1.45) -- node [red,above] {$v$} (0.75,1.325);

         \node[anchor=east, font=\small] at (0,1) {$C_0(\initial_w(I))$};

         \node[anchor=south west, font=\small] at (1.5,1.75) {$C_0(\initial_v \initial_w(I))$};

         \node[anchor=north, font=\small] at (1.5,-0.65) {$1=\dim(C_0(I))<\dim(C_0(\initial_w(I)))<\dim(C_0(\initial_v\initial_w(I)))=\dim\Trop(I)=3$};
       \end{tikzpicture}\vspace{-0.5cm}%
       \caption{computing a tropical starting cone recursively}
       \label{fig:tropStartingCone}
     \end{figure}

     This shows that $w+\varepsilon\cdot v\in \Trop(I)$ for
     $\varepsilon > 0$ sufficiently small. Together with the one-dimensional $C_0(I)$, this determines a maximal, three-dimensional
     Gr\"obner cone in our tropical variety, see Figure~\ref{fig:tropStartingCone}.
   \end{example}

   Two centrals tools necessary to describe the tropical variety around one of its codimension one cells are generic weight vectors and tropical witnesses.

   \begin{definition}[multiweights and generic weights]\label{def:multiweights}
     Given weight vectors $w\in\RR_{<0}\times\RR^n$ and
     $v_1,\ldots,v_d\in \RR \times\RR^n$, we define the \emph{initial
       form} of an element $g\in \Rtx$ with respect to the multidegree
     $(w,v_1,\ldots,v_d)$ to be
     \begin{displaymath}
       \initial_{(w,v_1,\ldots,v_d)} (g) = \initial_{v_d}\ldots \initial_{v_1}\initial_{w}(g),
     \end{displaymath}
     and we define the \emph{initial ideal} of $I\unlhd \Rtx$ with respect to $(w,v_1,\ldots,v_d)$ to be
     \begin{displaymath}
       \initial_{(w,v_1,\ldots,v_d)} (I) = \initial_{v_d}\cdots \initial_{v_1}\initial_{w}(I) =\langle \initial_{(w,v_1,\ldots,v_d)} (g) \mid g\in I \rangle.
     \end{displaymath}
     Also, still fixing the lexicographical ordering $>$ with $x_1>\ldots>x_n>1>t$ from Definition~\ref{def:initialReduction}, we define the multiweighted ordering $\textcolor{red}{>_{(w,v_1,\ldots,v_k)}}$ to be
     \begin{align*}
       & t^\beta\cdot x^\alpha >_{(w,v_1,\ldots,v_k)} t^\delta\cdot x^\gamma \quad \Longleftrightarrow \quad \text{either}: \\
       & \quad \bullet\; w\cdot (\beta,\alpha) > w\cdot (\delta,\gamma) \text{ or} \\
       & \quad \bullet\; w\cdot (\beta,\alpha) = w\cdot (\delta,\gamma) \text{ and there exists an }1\leq l \leq d \text{ with} \\
       & \quad \qquad v_i\cdot (\beta,\alpha) = v_i \cdot (\delta,\gamma) \text{ for all } 1\leq i<l \text{ and } v_l\cdot (\beta,\alpha) > v_l \cdot (\delta,\gamma) \text{ or} \\
       & \quad \bullet\; w\cdot (\beta,\alpha) = w\cdot (\delta,\gamma) \text{ and } v_i\cdot (\beta,\alpha) = v_i \cdot (\delta,\gamma) \text{ for all } 1\leq i\leq d\\
       & \quad \qquad \text{and } t^\beta\cdot x^\alpha > t^\delta\cdot x^\gamma.
     \end{align*}
     Moreover, given a polyhedral cone $\sigma\subseteq\RR_{\leq 0}\times\RR^n$ of dimension $d$ with $\sigma\nsubseteq\{0\}\times\RR^n$ and a point $w\in\relint(\sigma)$ (note that $w\in\RR_{<0}\times\RR^n$ necessarily), we call a weight vector $u\in\sigma$ \emph{generic around $w$}, if for all open neighbourhoods $U$ around $w$ there exists a weight vector $u'\in U\cap\sigma$ not lying on any Gr\"obner cone of dimension lower than $d$ such that $\initial_{u'}(\pi^{-1}I)=\initial_u(\pi^{-1}I)$.
   \end{definition}

   \begin{algorithm}[$\initial_{\sigma,w}(G)$, generic initial ideal around a weight]\label{alg:genericInitialIdeal}\
     \begin{algorithmic}[1]
       \REQUIRE{$(\sigma,w,G)$, where
         \begin{enumerate}[leftmargin=*]
         \item $\sigma\subseteq\RR_{\leq 0}\times\RR^n$, a polyhedral cone with $\sigma\nsubseteq\{0\}\times\RR^n$,
         \item $w\in\relint(\sigma)$ a relative interior point,
         \item $G\subseteq I$ a generating set of an $x$-homogeneous ideal $I\unlhd \Rtx$.
         \end{enumerate}}
       \ENSURE{$(H',G',>')=\initial_{(\sigma,w)}(G)$, where
         \begin{enumerate}[leftmargin=*]
         \item $>':=>_u$ for a weight vector $u\in\sigma$ generic around $w$,
         \item $G'$ an initially reduced standard basis of $I$ with respect to $>'$,
         \item $H'=\{\initial_u(g)\mid g\in G'\}$.
         \end{enumerate}}
       \STATE Choose a basis $v_1,\ldots,v_d$ of the linear span of $\sigma$.
       \STATE Pick a $t$-local monomial ordering $>$ on $\Mon(t,x)$.
       \STATE Compute an initially reduced standard basis $G'$ of $I=\langle G \rangle$ w.r.t. $>_{(w,v_1,\ldots,v_d)}$.
       \STATE Set $H':=\{\initial_{(w,v_1,\ldots,v_d)}(g)\mid g\in G'\}$
       \RETURN{$(H',G',>_{(w,v_1,\ldots,v_d)})$.}
     \end{algorithmic}
   \end{algorithm}
   \lang
   {
   \begin{proof}
     For sake of simplicity, let $>'$ denote the ordering
     $>_{(w,v_1,\ldots,v_k)}$. By definition of $>'$, it is easy to see
     that $\lt_{>'}(g_i')=\lt_{>'}(\initial_w(g_i'))$ for all $i$. Lemma
     \ref{lem:Cmembership} then implies that $w\in C_{>'}(I)$, and hence
     Proposition \ref{prop:BasisOfInitialIdeal} implies that
     $\{\initial_w(g_1'),\ldots,\initial_w(g_k')\}$ is an initially
     reduced standard basis of $\initial_w(I)$. \\
     Similarly, it is easy to see that
     $\lt_{>'}(\initial_w(g_i'))=\lt_{>'}(\initial_{v_1}\initial_w(g_i'))$ for all $i$, leading to $\{\initial_{(w,v_1)}(g_1'),\ldots,\initial_{(w,v_1)}(g_k')\}$ being an initially reduced standard basis of $\initial_{(w,v_1)}(I)$. \\
     Continuing this train of thought, we obtain that
     $\{h_i',\ldots,h_k'\}$ is an initially reduced standard basis of
     $\initial_{(w,v_1,\ldots,v_d)}(I)$ with respect to $>'$

     Applying Proposition \ref{prop:PertubationOfInitialIdeal} repeatedly yields
     \begin{displaymath}
       \initial_{w,v_1,\ldots,v_d}(I) = \initial_{v_d} \ldots \initial_{v_1} \initial_{w}(I) \underset{\ref{prop:PertubationOfInitialIdeal}}{\overset{\text{Prop.}}{=}} \initial_{u}(I)
     \end{displaymath}
     for all $u=w+\varepsilon_1\cdot v_1+\ldots+\varepsilon_d\cdot
     v_d\in\RR_{<0}\times \RR^n$, provided all $\varepsilon_i>0$
     sufficiently small. In particular, these weight vectors $u$ cannot
     lie in a Gr\"obner cone of dimension less than $d$, making these
     weights generic with respect to $I$. Moreover, $w\in\relint(\sigma)$
     implies $u\in\relint(\sigma)$.
   \end{proof}
   }


   \begin{definition}\label{def:tropicalWitness}
     Let $I\unlhd\Rtx$ and let $u\in\RR_{<0}\times\RR^n$ be such that $C_u(I)\nsubseteq \Trop(I)$. We call an element $f\in I$ a \emph{tropical witness} of $C_u(I)$ if $\initial_v(f)$ is a monomial for all $v\in\Relint(C_u(I))$.
   \end{definition}

   \begin{algorithm}[TropWitness, {\cite[Algorithm 4.7]{BJSST07}}]\label{alg:witness}\
     \begin{algorithmic}[1]
       \REQUIRE{$(m,H,G,>)$, where
         \begin{enumerate}[leftmargin=*]
         \item $>_w$ a weighted monomial ordering for some $w\in\RR_{<0}\times\RR^n$,
         \item $G=\{g_1,\ldots,g_k\}$ an initially reduced standard basis of an $x$-homogeneous ideal $I\unlhd\Rtx$ with respect to $>_w$,
         \item $H=\{h_1,\ldots,h_k\}$ with $h_i=\initial_w(g_i)$,
         \item $m\in\initial_w(I)$ a monomial.
         \end{enumerate}}
       \ENSURE{$f\in I$, a tropical witness of $C_w(I)$.}
       \STATE Compute a standard representation $m = q_1\cdot h_1+\ldots+q_k\cdot h_k$, i.e. no term of $q_i\cdot h_i$ lies in $\langle \lm_>(h_j)\mid j<i\rangle$ for all $1\leq i\leq k$.
       \RETURN{$q_1\cdot g_1+\ldots+q_k\cdot g_k$.}
     \end{algorithmic}
   \end{algorithm}

   \lang
   {
   \begin{lemma}\label{lem:tropicalWitness}
     Suppose $C_w(I)\nsubseteq \Trop(I)$, so that there exists a monomial
     $m\in\initial_w(I)$. Let $G$ be an initially reduced standard basis
     of $I$ with respect to a $t$-local monomial ordering $>$ with $w\in
     C_>(I)$, and set $H:=\{\initial_w(g)\mid g\in G\}$. Then Algorithm
     \ref{alg:witness} computes a tropical witness given the input
     $(m,H,G,>)$.
   \end{lemma}
   \begin{proof}
     We know that the output $f\in I$ satisfies $\initial_w(f)=m$.
     Now suppose $v\in\Relint(C_w(I))$. Then
     $\initial_v(I)=\initial_w(I)$, which by
     Proposition~\ref{prop:initialEquality} implies
     $\initial_v(g)=\initial_w(g)$ for all $g\in G$. This means that the
     input for our homogeneous division with remainder, $(m,H,>)$, is
     homogeneous with respect to $v$, the monomial $m$ being trivially
     homogeneous with respect to any weight. By Remark
     \ref{rem:HDDwRweightedHomogeneous}, our output,
     $(\{q_1,\ldots,q_k\},0)$, is then also homogeneous with respect to
     $v$, which allows us to show that $\initial_v(f)=m$ using the very
     same arguments as in the proof of Algorithm \ref{alg:witness}.
   \end{proof}
   }



   \begin{algorithm}[TropStar, {\cite[Algorithm 4.8]{BJSST07}}]\label{alg:tropCurve}\
     \begin{algorithmic}[1]
       \REQUIRE{$G$, the generating set of an $x$-homogeneous ideal $I\unlhd\Rtx$ with $\dim \Trop(I)=\dim C_0(I)+1$.}
       \ENSURE{$\Delta$, a collection of maximal dimensional polyhedral cones in $\RR_{\leq 0}\times\RR^n$ covering $\Trop(I)$.}
       \STATE Compute the common refinement of all tropical hypersurfaces, throwing away cones in $\{0\}\times\RR^n$,
       \begin{center}
         $\Delta:=\{\sigma\in \textstyle\bigwedge_{g\in G}\text{TropHypersurface}(g)\mid \sigma\cap \RR_{<0}\times\RR^n\neq\emptyset \}$.
       \end{center}
       \STATE Set $L:=\Delta$.
       \WHILE{$L\neq\emptyset$}
       \STATE Pick $\sigma\in L$ maximal and $w\in\relint(\sigma)$.
       \STATE Compute an initial ideal with respect to a weight $w\in\sigma$ generic around $w$:
       \begin{center}
         $(H',G',>')=\initial_{(\sigma,w)}(G).$
       \end{center}
       \IF{$\initial_u(I)=\langle H'\rangle$ contains a monomial $s\neq 0$}
       \STATE Compute a tropical witness $g:=\text{TropicalWitness}(s,H',G',>')$.
       \STATE Set
       \begin{center}
         $G:=G\cup\{ g \},\quad \Delta := \Delta\wedge \Trop(g),\quad L:=L\wedge\Trop(g).$
       \end{center}
       \STATE \textbf{continue}
       \ENDIF
       \STATE Suppose $w=(w_t,w_x)\in\RR_{<0}\times\RR^n$, set $w_{\text{neg}}:=(w_t,-w_x)\in\RR_{<0}\times\RR^n$.
       \IF{$w_{\text{neg}}\in\sigma$}
       \STATE Redo Steps $5$ to $9$ with $w:=w_{\text{neg}}$.
       \ENDIF
       \STATE Set $L:=L\setminus\{\sigma\}$.
       \ENDWHILE
       \RETURN{$\Delta$}
     \end{algorithmic}
   \end{algorithm}
   \lang
   {
   \begin{proof}
     To show the termination, observe that the number of Gr\"obner cones
     whose interiors intersect $\Delta$ non-trivially is overall
     decreasing due to Lemma \ref{lem:tropicalWitness}. In fact, in each
     iteration, either the finite working list $L$ or the number of these
     Gr\"obner cones decreases, hence the termination.

     To show correctness, first note that $\Trop(I)\subseteq \bigcap_{g\in G} \Trop(g)=\bigcup_{\sigma\in\Delta} \sigma$, because $G\subseteq I$.

     For the opposite inclusion, observe that for all $\sigma\in\Delta$
     maximal the first inclusion implies
     $\dim(\sigma)\geq\dim(\Trop(I))$. And if
     $\dim(\sigma)>\dim(\Trop(I))$, any initial ideal with respect to a
     generic weight would contain a monomial, which contradicts $\sigma$
     successfully passing the tests for monomials. Hence
     $\dim(\sigma)=\dim(\Trop(I))$. Moreover,
     $C_0(I)\subseteq\Trop(I)\subseteq\Trop(g)$ implies that
     $C_0(I)\subseteq \sigma$, so that all $\sigma\in\Delta$ maximal are
     cones of dimension $\dim(\Trop(I))$ with a one-codimensional subset
     $C_0(I)\subseteq\sigma$.

     Now consider a weight vector $u\in\sigma$ for some $\sigma\in\Delta$
     maximal. Let $w\in\sigma$ be the relative interior point chosen in
     Step $5$ and $w_{\text{neg}}$ the weight vector constructed in Step
     $6$. Because $u$ and $w$ are linearly dependent modulo the span of
     $C_0(I)$ for dimensional reasons, there exists a $v\in C_0(J)$ and a
     $\lambda>0$ such that one of the following cases holds:
     \begin{multicols}{2}
       \begin{enumerate}
       \item $u=\lambda\cdot w+v$,
       \item $u+v=\lambda\cdot w$,
       \item $u=\lambda\cdot w_{\text{neg}}+v$,
       \item $u+v=\lambda\cdot w_{\text{neg}}$.
       \end{enumerate}
     \end{multicols}
     Because all Gr\"obner cones are closed under translation by $v$, the first case yields
     \begin{displaymath}
       \initial_u(I) = \initial_{\lambda\cdot w+v}(I) = \initial_{\lambda\cdot w}(I) = \initial_w(I).
     \end{displaymath}
     Similarly, in the second case we obtain
     \begin{displaymath}
       \initial_u(I) = \initial_{u+v}(I) = \initial_{\lambda\cdot w}(I) = \initial_w(I) .
     \end{displaymath}
     Cases (3) and (4) are analogous, and in either case $\initial_u(I)$ is monomial free, implying that $u\in \Trop(I)$.
   \end{proof}
   }

   \lang
   {
   \begin{remark}\label{rem:generalTropicalVarieties}
     Observe that we explicitly used the fact that $\Trop(I)$ is a pure
     polyhedral complex of a fixed dimension. If that were not the
     case, then the Gr\"obner cones in the tropical variety would not
     be connected in codimension $1$ anymore and it is not sufficient
     to look only at the interior of facets for adjacent cones. Figure
     \ref{fig:tropOneCodimensionFail}, taken from Example
     \ref{ex:monFreeTermFree}, shows for example a tropical variety in
     which the upper two maximal cones are connected to the lower
     maximal cone via a face of codimension $2$.

     \begin{figure}[h]
       \centering
       \begin{tikzpicture}
         \fill[color=blue!20]
         (-2,0) rectangle (2,2);
         \fill[color=red!20]
         (-2,-2) rectangle (2,0);
         \draw
         (0,0) -- (2,0)
         (0,0) -- (0,2)
         (0,0) -- (-2,0);
         \draw[color=blue!70]
         (0,0) -- (0,-2);
         \fill (0,0) circle (2pt);
         \node[anchor=north east, font=\scriptsize] at (0,0) {$(1,1,1)$};
         \fill (1,0) circle (2pt);
         \node[anchor=north, font=\scriptsize] at (1,0) {$(2,1,1)$};
         \fill (0,1) circle (2pt);
         \node[anchor=east, font=\scriptsize] at (0,1) {$(1,2,1)$};
         \node[anchor=west] (q1) at (3,1.25) {$\initial_w(I) = \langle 5x,15y\rangle$};
         \node[anchor=north, yshift=-5] at (q1) {monomial free};
         \node[anchor=east] (q2) at (-3,1.25) {$\initial_w(I) = \langle 3z,15y\rangle$};
         \node[anchor=north, yshift=-5] at (q2) {monomial free};
         \node[anchor=west] (q3) at (3,-0.75) {$\initial_w(I)=\langle 5x,2z\rangle$};
         \node[anchor=north, yshift=-5] at (q3) {contains $xz$};
         \node[anchor=east] (q3) at (-3,-0.75) {$\initial_w(I)=\langle 3z,2z\rangle$};
         \node[anchor=north, yshift=-5] at (q3) {contains $z$};
         \node[anchor=north west] (plane) at (2,-2) {$\RR^3\cap \{ w_z=1 \}$};
         \draw[->] (plane.west) to [bend left=45] (1.75,-2);
       \end{tikzpicture}
       \caption{$\Trop(\langle 2x+2,2y+3\rangle)\cap\{-1\}\times\RR^2$}
       \label{fig:tropOneCodimensionFail}
     \end{figure}

     Another totally different problem would be that the tropical
     varieties of the initial ideals connecting two maximal Gr\"obner
     cones don't necessarily contain a one-codimensional lineality
     space anymore. In our example, $\Trop(\initial_{(1,1,1)}(I))$
     would contain two maximal cones of dimension $3$ which correspond
     to the upper maximal cones and a maximal cone of dimension $2$
     corresponding to the lower maximal cone. Because the cone of
     dimension $2$ is a proper pointed cone and not a linear subspace,
     it is impossible for $\Trop(\initial_{(1,1,1)}(I))$ to have a
     one-codimensional lineality space.
   \end{remark}
   }

   \begin{example}\label{ex:tropOneCodimension}
     Consider the ideal $I\unlhd\ZZ\llbracket t\rrbracket[x_1,\ldots,x_4]$ generated by
     \begin{displaymath}
       g_0:=3, \quad g_1:=tx_3x_4-tx_1x_2+x_1^2, \quad
       g_2:=tx_1x_2^2-x_1^2x_2-t^3x_1x_2x_3+t^2x_1^2x_3,
     \end{displaymath}
     which is $3$-dimensional with $C_0(I)=\Cone((-2,-1,1,5,-5))+\RR\cdot(0,1,1,1,1)$. Figure \ref{fig:TropG02} illustrates the combinatorial structure of $\Trop(g_0)\cap\Trop(g_1)\cap\Trop(g_2)$.
     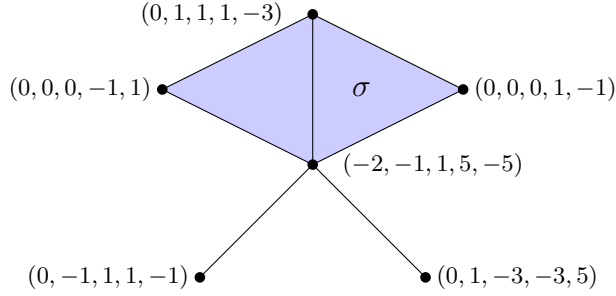
\begin{figure}[h]
       \centering
       \begin{tikzpicture}
         \fill[blue!20] (0,0) -- (0,2) -- (2,1) -- cycle;
         \fill[blue!20] (0,0) -- (0,2) -- (-2,1) -- cycle;
         \draw (0,0) -- (0,2)
         (0,0) -- (2,1)
         (0,0) -- (-2,1)
         (2,1) -- (0,2)
         (-2,1) -- (0,2)
         (0,0) -- (1.5,-1.5)
         (0,0) -- (-1.5,-1.5);
         \fill (0,0) circle (2pt)
         (0,2) circle (2pt)
         (2,1) circle (2pt)
         (-2,1) circle (2pt)
         (1.5,-1.5) circle (2pt)
         (-1.5,-1.5) circle (2pt);
         \node[anchor=west, font=\scriptsize, xshift=0.25cm] at (0,0) {$(-2,-1,1,5,-5)$};
         \node[anchor=west, font=\scriptsize] at (2,1) {$(0,0,0,1,-1)$};
         \node[anchor=west, font=\scriptsize] at (1.5,-1.5) {$(0,1,-3,-3,5)$};
         \node[anchor=east, font=\scriptsize, xshift=-0.25cm] at (0,2) {$(0,1,1,1,-3)$};
         \node[anchor=east, font=\scriptsize] at (-2,1) {$(0,0,0,-1,1)$};
         \node[anchor=east, font=\scriptsize] at (-1.5,-1.5) {$(0,-1,1,1,-1)$};
         \node at (0.65,1) {$\sigma$};
       \end{tikzpicture}\vspace{-0.5cm}%
       \caption{combinatorial structure of $Trop(g_0)\cap\Trop(g_1)\cap\Trop(g_2)$}
       \label{fig:TropG02}
     \end{figure}

     One cone $\sigma$ that can be seen to be contained in the intersection is
     \begin{displaymath}
       \sigma := \Cone(\underbrace{(0,0,0,-1,1)}_{=:w_1}, \underbrace{(0,1,1,1,-3)}_{=:w_2})+C_0(I),
     \end{displaymath}
     because we have
     \begin{displaymath}
       g_1=\underbrace{tx_3x_4\overbrace{\textcolor{red}{-tx_1x_2+x_1^2}}^{\initial_{w_2}(g_1)}}_{\initial_{w_1}(g_1)},\quad
       g_2=\underbrace{\overbrace{\textcolor{red}{tx_1 x_2^2-x_1^2x_2}}^{\initial_{w_1}(g_2)}-t^3x_1 x_2 x_3+t^2x_1 ^2x_3}_{\initial_{w_2}(g_2)},
     \end{displaymath}
     so that for any weight $w\in\sigma$, $\initial_w(g_1)$ contains
     at least the binomial $-tx_1x_2+x_1^2$ and $\initial_w(g_2)$
     contains at least the binomial $tx_1x_2-x_1^2x_2$. In particular,
     neither are monomials.
     However, it can be shown that, for
     \begin{displaymath}
       g_3:=tx_2x_3x_4+t^2x_1^2x_3-t^3x_1x_2x_3 \in I \quad \text{and} \quad w:=(-1,1,2,2,0)\in\sigma,
     \end{displaymath}
     $\initial_w(g_3)=tx_2x_3x_4$ is a monomial, which implies
     that $\sigma\nsubseteq\Trop(I)$ (not that we would have expected
     otherwise considering
     $\dim(\sigma)=4>3=\dim(\Trop(I))$). Figure~\ref{fig:TropG03}
     illustrates the combinatorial structure of $\bigcap_{i=0}^3
     \Trop(g_i)$, red highlighting all weights that have been eliminated through
     the intersection with $\Trop(g_3)$.
     \begin{figure}[h]
       \centering
       \begin{tikzpicture}
         \fill[blue!20] (0,0) -- (0,2) -- (1.33,1.33) -- cycle;
         \fill[blue!20] (0,0) -- (0,2) -- (-2,1) -- cycle;
         \fill[red!20] (0,0) -- (1.33,1.33) -- (2,1) -- cycle;
         \draw[red!70] (1.33,1.33) -- (2,1);
         \draw (0,0) -- (0,2)
         (0,0) -- (2,1)
         (0,0) -- (-2,1)
         (1.33,1.33) -- (0,2)
         (-2,1) -- (0,2)
         (0,0) -- (1.5,-1.5)
         (0,0) -- (-1.5,-1.5)
         (0,0) -- (1.33,1.33);
         \fill (0,0) circle (2pt)
         (0,2) circle (2pt)
         (2,1) circle (2pt)
         (-2,1) circle (2pt)
         (1.5,-1.5) circle (2pt)
         (-1.5,-1.5) circle (2pt)
         (1.33,1.33) circle (2pt);
         \node[anchor=west, font=\scriptsize, xshift=0.25cm] at (0,0) {$(-2,-1,1,5,-5)$};
         \node[anchor=west, font=\scriptsize] at (2,1) {$(0,0,0,1,-1)$};
         \node[anchor=west, font=\scriptsize] at (1.5,-1.5) {$(0,1,-3,-3,5)$};
         \node[anchor=base west, font=\scriptsize] at (1.33,1.33) {$(0,1,1,-3,1)$};
         \node[anchor=east, font=\scriptsize, xshift=-0.25cm] at (0,2) {$(0,1,1,1,-3)$};
         \node[anchor=east, font=\scriptsize] at (-2,1) {$(0,0,0,-1,1)$};
         \node[anchor=east, font=\scriptsize] at (-1.5,-1.5) {$(0,-1,1,1,-1)$};
         \node at (0.45,1) {$\sigma'$};
       \end{tikzpicture}\vspace{-0.5cm}%
       \caption{combinatorial structure of $\bigcap_{i=0}^3\Trop(g_i)$}
       \label{fig:TropG03}
     \end{figure}

     Continuing, the following cone can be seen to be contained in the intersection of the tropical varieties of $g_0,\ldots,g_3$,
     \begin{displaymath}
       \sigma':=\Cone(\underbrace{(0,1,1,-3,1)}_{=:w_1},\underbrace{(0,1,1,1,-3)}_{=:w_2}) + C_0(I),
     \end{displaymath}
     since
     \begin{align*}
       g_1&=tx_3x_4\overbrace{\textcolor{red}{-tx_1x_2+x_1^2}}^{\initial_{w_1}(g_1),\;\initial_{w_2}(g_1)},\quad
       g_2=\underbrace{\overbrace{\textcolor{red}{tx_1 x_2^2-x_1^2x_2}}^{\initial_{w_1}(g_2)} -t^3x_1 x_2 x_3 +t^2x_1 ^2x_3}_{\initial_{w_2}(g_2)}, \\
       g_3&=\underbrace{tx_2x_3x_4\overbrace{+\textcolor{red}{t^2x_1^2x_3-t^3x_1x_2x_3}}^{\initial_{w_3}(g_3)}}_{\initial_{w_1}(g_3)}.
     \end{align*}
     However, setting
     \begin{displaymath}
       g_4:=tx_2x_3x_4-t^3x_3^2x_4 \in I \quad \text{and} \quad  w':=(-1,3,4,5,0)\in\sigma',
     \end{displaymath}
     $\initial_{w'}(g_4)=tx_2x_3x_4$ is a monomial. Hence, we have again $\sigma'\nsubseteq\Trop(I)$ and Figure~\ref{fig:TropG04}
     illustrates the combinatorial structure of $\bigcap_{i=0}^4
     \Trop(g_i)$.
     Further calculations will yield that indeed $\Trop(I)=\bigcap_{i=0}^4\Trop(g_i)$.

     \begin{figure}[h]
       \centering
       \begin{tikzpicture}
         \fill[red!20] (0,0) -- (0,2) -- (1.33,1.33) -- cycle;
         \fill[red!20] (0,0) -- (0,2) -- (-2,1) -- cycle;
         \draw (0,0) -- (0,2)
         (0,0) -- (1.5,-1.5)
         (0,0) -- (-1.5,-1.5);
         \draw[red!70] (0,0) -- (2,1)
         (1.33,1.33) -- (0,2)
         (0,0) -- (-2,1)
         (-2,1) -- (0,2)
         (0,0) -- (1.33,1.33);
         \fill[red] (2,1) circle (2pt)
         (1.33,1.33) circle (2pt)
         (-2,1) circle (2pt);
         \fill (0,0) circle (2pt)
         (0,2) circle (2pt)
         (1.5,-1.5) circle (2pt)
         (-1.5,-1.5) circle (2pt);
         \node[anchor=west, font=\scriptsize, xshift=0.15cm] at (0,0) {$(-2,-1,1,5,-5)$};
         \node[anchor=west, font=\scriptsize] at (1.5,-1.5) {$(0,1,-3,-3,5)$};
         \node[anchor=west, font=\scriptsize, xshift=0.25cm] at (0,2) {$(0,1,1,1,-3)$};
         \node[anchor=east, font=\scriptsize] at (-2,1) {$(0,0,0,-1,1)$};
         \node[anchor=east, font=\scriptsize] at (-1.5,-1.5) {$(0,-1,1,1,-1)$};
       \end{tikzpicture}\vspace{-0.5cm}%
       \caption{combinatorial structure of $\bigcap_{i=0}^4\Trop(g_i)$}
       \label{fig:TropG04}
     \end{figure}
   \end{example}

   Tropical varieties with one-codimensional homogeneity space are important as they describe general tropical varieties locally around a codimension one cone.

   \begin{example}\label{ex:tropicalStar}
     Consider again the ideal $I\unlhd \ZZ\llbracket t\rrbracket[x_1,\ldots,x_4]$ from Example~\ref{ex:Chan} and \ref{ex:ChanStartingCone} generated by
     \[ 3-t, \quad 8tx_3x_4+tx_1x_2+2x_1^2, \quad tx_1x_2^2+2x_1^2x_2+2t^3x_1x_2x_3+4t^2x_1^2x_3-64tx_1^3. \]
     For any weight vector inside $\Trop(I)$, say $w=(-2,-1,1,5,-5)$, $\Trop(\initial_w(I))$ describes $\Trop(I)$ locally around $w$, see Figure~\ref{fig:tropStarComparison}.
     In particular, if $w$ lies on a Gr\"obner cone of codimension $1$,
     we have
     \[\dim C_0(\initial_w(I))\overset{\text{Lem.}}{\underset{\text{\ref{lem:homogeneitySpace}}}{=}}\dim C_{w}(I)=\dim \Trop(I)-1 = \dim \Trop(\initial_w(I))-1, \]
     which allows us to compute $\Trop(\initial_w(I))$ using Algorithm~\ref{alg:tropCurve}.
     \begin{figure}[h]
       \centering
       \begin{tikzpicture}
         \draw (-2,-1) -- (0,-0.5) -- (2,-1);
         \draw (2,-1) -- (3.5,0.5)
         (2,-1) -- (0,-3.5)
         (-2,-1) -- (-3.5,0.5)
         (-2,-1) -- (0,-3.5)
         (0,-0.5) -- (0,1.5);
         \node[anchor=north, font=\scriptsize, yshift=-0.05cm] at (0,-0.5) {$w$};
         \draw[very thick,blue!70,->] (0,-0.5) -- (0.5,-0.63);
         \draw[very thick,blue!70,->] (0,-0.5) -- (0,0);
         \draw[very thick,blue!70,->] (0,-0.5) -- (-0.5,-0.63);
         \fill (0,-0.5) circle (2pt)
         (-2,-1) circle (2pt)
         (2,-1) circle (2pt)
         (3.5,0.5) circle (2pt)
         (-3.5,0.5) circle (2pt)
         (0,1.5) circle (2pt)
         (0,-3.5) circle (2pt);
         \node[anchor=east, font=\scriptsize, xshift=-0.2cm] at (-2,-1) {$(-1,0,-1,1,0)$};
         \node[anchor=west, font=\scriptsize, xshift=0.2cm] at (2,-1) {$(-1,0,1,3,-4)$};
         \node[anchor=south, font=\scriptsize, xshift=0.2cm] at (3.5,0.5) {$(0,1,-3,1,1)$};
         \node[anchor=south, font=\scriptsize, xshift=-0.2cm] at (-3.5,0.5) {$(0,1,1,5,-7)$};
         \node[anchor=north, font=\scriptsize, xshift=-0.2cm] at (0,-3.5) {$(0,0,0,-1,1)$};
         \node[anchor=south, font=\scriptsize, xshift=0.2cm] at (0,1.5) {$(0,-1,1,1,-1)$};

         \draw (7,-1.5) -- (5,-2)
           (7,-1.5) -- (9,-2)
           (7,-1.5) -- (7,0.5);
         \draw[very thick,blue!70,->] (7,-1.5) -- (6.5,-1.63);
         \draw[very thick,blue!70,->] (7,-1.5) -- (7,-1);
         \draw[very thick,blue!70,->] (7,-1.5) -- (7.5,-1.63);
         \fill (7,-1.5) circle (2pt)
           (5,-2) circle (2pt)
           (9,-2) circle (2pt)
           (7,0.5) circle (2pt);
         \node[anchor=north, font=\scriptsize, yshift=-0.05cm] at (7,-1.5) {$w$};
         \node[anchor=north, font=\scriptsize] at (9,-2) {$(0,1,-3,-3,5)$};
         \node[anchor=south, font=\scriptsize, xshift=0.05cm] at (7,0.5) {$(0,1,1,1,-3)$};
         \node[anchor=north, font=\scriptsize] at (5,-2) {$(0,-1,1,1,-1)$};
       \end{tikzpicture}\vspace{-0.5cm}%
       \caption{$\Trop(I)$ and $\Trop(\initial_w(I))$}
       \label{fig:tropStarComparison}
     \end{figure}
   \end{example}

   Combining Algorithms~\ref{alg:tropStartingCone}, \ref{alg:tropCurve} and \ref{alg:flip}, we obtain an algorithm to compute the tropical variety of a general ideal, provided it is pure and connected in codimension one.

   \begin{algorithm}[Trop, {\cite[Algorithm 4.11]{BJSST07}}]\label{alg:tropVariety}\
     \begin{algorithmic}[1]
       \REQUIRE{$(G_{\text{input}},>_{\text{input}})$, where for an $x$-homogeneous ideal $I\unlhd\Rtx$ with $\Trop(I)$ pure and connected in codimension one:
         \begin{itemize}[leftmargin=*]
         \item $>_{\text{input}}$ is a weighted monomial ordering,
         \item $G_{\text{input}}$ an initially reduced standard basis of $I$ with respect to $>_{\text{input}}$.
         \end{itemize}}
       \ENSURE{$\Delta=\{C_w(I)\mid C_w(I)\in\Trop(I) \text{ maximal}\}$, so that
         \begin{center}
           $\Trop(I)=\textstyle\bigcup_{C_w(I)\in\Delta} C_w(I).$
         \end{center}}
       \STATE Compute a starting cone
       \begin{center}
         $(C_w(I),G,>) = \text{TropStartingCone}(G_{\text{input}},>_{\text{input}}).$
       \end{center}
       \STATE Initialize $\Delta:=\{C_w(I)\}$.
       \STATE Initialize a working list $L:=\{(G,>,C_w(I))\}$.
       \WHILE{$L\neq \emptyset$}
       \STATE Pick $(G,>,C_w(I))\in L$.
       \FOR{all facets $\tau\leq C_w(I)$, $\tau\nsubseteq\{0\}\times\RR^n$}
       \STATE Compute a relative interior point $u\in\tau$.
       \STATE Set $H:=\{\initial_u(g)\mid g\in G\}$.
       \STATE Compute the tropical star
       \begin{center}
         $\Delta_{\text{star}} = \text{TropStar}(H).$
       \end{center}
       \FOR{$\theta\in \Delta_{\text{star}}$}
       \STATE Compute a relative interior point $v\in\theta$.
       \IF{$C_{u+\varepsilon\cdot v}(I)\notin \Delta$ for $\varepsilon>0$ sufficiently small}
       \STATE Flip the standard basis to the adjacent ordering
       \begin{center}
         $(G',>'):=\Flip(G,H,v,>).$
       \end{center}
       \STATE Set $H':=\{\initial_{(u,v)}(g)\mid g\in G'\}$.
       \STATE Construct the adjacent Gr\"obner cone
       \begin{center}
         $C_{w'}(I):=C(H',G',>').$
       \end{center}
       \STATE Set
       \begin{center}
         $\Delta:=\Delta\cup\{C_{w'}(I)\}\quad\text{and}\quad L:=L\cup\{ (G',>',C_{w'}(I))\}.$
       \end{center}
       \ENDIF
       \ENDFOR
       \ENDFOR
       \STATE Set $L:=L\setminus\{(G,>,C_w(I))\}$.
       \ENDWHILE
       \RETURN{$\Delta$.}
     \end{algorithmic}
   \end{algorithm}

   \begin{example}[tropical traversal]
   \end{example}\vspace{-0.75cm}
     For a visual example of Algorithm~\ref{alg:tropVariety} at work, consider the $3$-dimensional ideal
     \begin{align*}
       I &= \langle 4x^2+xy+16y^2+xz+8z^2, 2-t \rangle \\
       &= \langle \underbrace{t^2x^2+xy+t^4y^2+xz+t^3z^2}_{=:g}, 2-t \rangle \in \ZZ\llbracket t \rrbracket [x,y,z].
     \end{align*}
     As $\initial_w(2-t)=2$ for all $w\in\RR_{<0}\times\RR^3$, it suffices to solely focus on $g$.
     For the starting cone, we begin with weight vector $w=(-3,-10,1,0)\in\RR_{<0}\times \RR^3$, since $\initial_w(g)=xy+t^3z^2$ is no monomial. In fact, its initial form is binomial, hence the only weight vectors $v$ such that $\initial_{w+\varepsilon v}(g)$ is no monomial are the $v$ such that $\initial_{w+\varepsilon v}(g) = \initial_{w}(g)$, or in other words $v\in C_{w}(I)$. This shows that $C_w(I)$ is a maximal cone in the tropical variety.

\begin{wrapfigure}{l}{0.3\textwidth}
\centering
       \begin{tikzpicture}
         \draw (0,0) -- (2,0);
         \fill (0,0) circle (2pt);
         \fill (2,0) circle (2pt);
         \node[anchor=south, font=\footnotesize] at (0,0) {$v_1$};
         \node[anchor=south, font=\footnotesize] at (2,0) {$v_2$};
       \end{tikzpicture}
\end{wrapfigure}
     Note that all Gr\"obner cones are invariant under translation by
     $(0,1,1,1)$. Hence the $3$-dimensional Gr\"obner cone $C_{w}(I)$ is
     spanned by two rays, which are generated by $v_1=(-2,-7, 1,0)$ and
     $v_2=(-1,-3, 0,0)$ respectively. This can be seen from their
     respective initial forms, which gain one additional term
     compared to $\initial_{w}(g)$,
     $\initial_{v_1}(g)=xy+t^4y^2+t^3z^2$ and $\initial_{v_2}(g) = xy+xz+t^3z^2$.
     We have thus finished computing a starting cone and identified its two facets, which we need to traverse.

\begin{wrapfigure}{l}{0.3\textwidth}
\centering
       \begin{tikzpicture}
         \draw (0,0) -- (2,0);
         \fill (0,0) circle (2pt);
         \fill (2,0) circle (2pt);
         \draw[red,->] (0,0) -- (0.5,0);
         \draw[red,->] (0,0) -- (-0.5,0.5);
         \draw[red,->] (0,0) -- (-0.5,-0.5);
         \node[anchor=south, font=\footnotesize, red] at (0.5,0) {$v_{1,3}$};
         \node[anchor=east, font=\footnotesize, red] at (-0.5,0.5) {$v_{1,1}$};
         \node[anchor=east, font=\footnotesize, red] at (-0.5,-0.5) {$v_{1,2}$};
       \end{tikzpicture}
\end{wrapfigure}

     If we pick one of the facets, say the one generated by $v_1$, we see
     that its tropical star consists of three rays. One ray points in the
     direction $v_{1,3}=(0,0,-2,-1)$ so that
     $\initial_{v_1+\varepsilon\cdot
       v_{1,3}}(g) = xy+t^3z^2=\initial_w(g)$, which undoubtedly points into our
     starting cone. Another ray points in the direction
     $v_{1,2}=(0,0,1,1)$ so that $\initial_{v_1+\varepsilon\cdot
       v_{1,2}}(g) = t^4y^2+t^3z^2$. The last ray points in the direction
     $v_{1,1}=(0,0,0,-1)$ so that $\initial_{v_1+\varepsilon\cdot v_{1,1}}(g) =
     xy+t^4y^2$.

\begin{wrapfigure}{l}{0.3\textwidth}
\centering
       \begin{tikzpicture}
         \draw (0,0) -- (2,0);
         \fill (0,0) circle (2pt);
         \fill (2,0) circle (2pt);
         \fill (-1,1) circle (2pt);
         \fill (-1,-1) circle (2pt);
         \draw (0,0) -- (-1,1);
         \draw (0,0) -- (-1,-1);
         \node[anchor=south, font=\footnotesize] at (0,0) {$v_1$};
         \node[anchor=south, font=\footnotesize] at (2,0) {$v_2$};
         \node[anchor=east, font=\footnotesize] at (-1,1) {$v_3$};
         \node[anchor=east, font=\footnotesize] at (-1,-1) {$v_4$};
       \end{tikzpicture}
\end{wrapfigure}

     Continuing with direction $v_{1,2}=(0,0,1,1)$, to whose side lies the
     closure of equivalence class such that
     $\initial_{w'}(g)=t^4y^2+t^3z^2$, we see that the other ray of the
     maximal Gr\"obner cone is generated by $v_3=(0,0,1,1)$ with
     $\initial_{v_3}(g)=t^4y^2+t^3z^2$. The ray lies on the boundary of the
     maximal Gr\"obner cone because it lies on the boundary of the lower
     halfspace.

     Continuing with the direction $v_{1,1}=(0,0,0,-1)$, which is the
     closure of the equivalence class such that
     $\initial_{w'}(g)=xy+t^4y^2$, we get that the other ray of the
     maximal Gr\"obner cone is $v_4 = (0,0,0,-1)$ with
     $\initial_{v_4}(g)=t^2x^2+xy+t^4y^2$.

\begin{wrapfigure}{l}{0.4\textwidth}
\centering
       \begin{tikzpicture}
         \draw (0,0) -- (2,0);
         \fill (0,0) circle (2pt);
         \fill (2,0) circle (2pt);
         \fill (-1,1) circle (2pt);
         \fill (-1,-1) circle (2pt);
         \draw (0,0) -- (-1,1);
         \draw (0,0) -- (-1,-1);
         \draw[->,red] (2,0) -- (1.5,0);
         \draw[->,red] (2,0) -- (2.5,0.5);
         \draw[->,red] (2,0) -- (2.5,-0.5);
         \node[red, anchor=south, font=\footnotesize] at (1.5,0) {$v_{2,3}$};
         \node[red, anchor=west, font=\footnotesize] at (2.5,0.5) {$v_{2,2}$};
         \node[red, anchor=west, font=\footnotesize] at (2.5,-0.5) {$v_{2,1}$};
       \end{tikzpicture}
\end{wrapfigure}
     Because both $v_3$ and $v_4$ lie on the boundary of the lower
     halfspace, the only facet left to traverse is the one generated by
     $v_2$. The tropical star around $v_2$ consists of three rays. One ray
     points in the direction of $v_{2,1}=(0,1,0,0)$ so that
     $\initial_{v_2+\varepsilon\cdot v_{2,1}}(g)=xy+xz$. Another ray points
     in the direction of $v_{2,2}=(0,0,-1,0)$ so that
     $\initial_{v_2+\varepsilon\cdot v_{2,2}}(g)=xz+t^3z^2$. The final ray
     points in the direction of $v_{2,3}=(0,0,2,1)$ so that
     $\initial_{v_2+\varepsilon\cdot v_{2,3}}(g)=xy+t^3z^2=\initial_w(g)$, this is the
     vector pointing into our starting cone.

\begin{wrapfigure}{l}{0.4\textwidth}
\centering
       \begin{tikzpicture}
         \draw (0,0) -- (2,0);
         \fill (0,0) circle (2pt);
         \fill (2,0) circle (2pt);
         \fill (-1,1) circle (2pt);
         \fill (-1,-1) circle (2pt);
         \draw (0,0) -- (-1,1);
         \draw (0,0) -- (-1,-1);
         \draw (2,0) -- (3,1);
         \draw (2,0) -- (3,-1);
         \fill (3,1) circle (2pt);
         \fill (3,-1) circle (2pt);
         \node[anchor=south, font=\footnotesize] at (0,0) {$v_1$};
         \node[anchor=south, font=\footnotesize] at (2,0) {$v_2$};
         \node[anchor=east, font=\footnotesize] at (-1,1) {$v_3$};
         \node[anchor=east, font=\footnotesize] at (-1,-1) {$v_4$};
         \node[anchor=west, font=\footnotesize] at (3,1) {$v_6$};
         \node[anchor=west, font=\footnotesize] at (3,-1) {$v_5$};
       \end{tikzpicture}
\end{wrapfigure}

     Continuing in the direction of $v_{2,1}$, the other ray of the
     maximal Gr\"obner cone is generated by $v_5=(-1,2,0,0)$ as
     $\initial_{v_5}(g)=t^2x^2+xy+xz$. And continuing in the
     direction of $v_{2,2}$, the other ray is generated by
     $v_6:=(0,0,-1,0)$ as $\initial_{v_6}(g)=t^2x^2+xz+t^3z^2$.

     Because $v_6$ lies on the boundary of the lower halfspace, $v_5$
     generates the only facet left to traverse. A quick glance at the
     initial forms imply that it is connected to the facets generated by
     $v_4$ and $v_6$, as it has two terms in common with each of them.

\begin{wrapfigure}{l}{0.4\textwidth}
\centering
       \begin{tikzpicture}
         \draw (0,0) -- (2,0);
         \fill (0,0) circle (2pt);
         \fill (2,0) circle (2pt);
         \fill (-1,1) circle (2pt);
         \fill (-1,-1) circle (2pt);
         \draw (0,0) -- (-1,1);
         \draw (0,0) -- (-1,-1);
         \draw (2,0) -- (3,1);
         \draw (2,0) -- (3,-1);
         \draw (3,1) -- (3,-1);
         \draw (3,-1) -- (-1,-1);
         \fill (3,1) circle (2pt);
         \fill (3,-1) circle (2pt);
         \node[anchor=south, font=\footnotesize] at (0,0) {$v_1$};
         \node[anchor=south, font=\footnotesize] at (2,0) {$v_2$};
         \node[anchor=east, font=\footnotesize] at (-1,1) {$v_3$};
         \node[anchor=east, font=\footnotesize] at (-1,-1) {$v_4$};
         \node[anchor=west, font=\footnotesize] at (3,1) {$v_6$};
         \node[anchor=west, font=\footnotesize] at (3,-1) {$v_5$};
       \end{tikzpicture}
\end{wrapfigure}

     We obtain that $\Trop(I)$ is covered by a polyhedral fans which,
     modulo the homogeneity space $\RR\cdot (0,1,1,1)$,
     has $6$ rays, of which the ones generated by $v_1,v_2,v_5$ lie in the
     interior of the lower halfspace $\RR_{\leq 0}\times\RR^n$, while the
     ones generated by $v_3,v_4,v_6$ lie on its boundary.

     The $6$ rays are pairwise connected via $7$ edges.
     The edges connecting $(v_1,v_3)$, $(v_1,v_4)$,
     $(v_2,v_6)$ and $(v_4,v_5)$ intersect the boundary in codimension
     one, while the cones connecting $(v_1,v_2)$ and $(v_2,v_5)$
     intersect the boundary in codimension $2$, which has to be the
     homogeneity space.

   \begin{example}[dependency on the valuation]\label{ex:valuationDependency}
     Consider the ideal from Example \ref{ex:chanTwoAdic},
     $I:=\langle x_1-2x_2+3x_3,3x_2-4x_3+5x_4 \rangle \unlhd \QQ[x]$.
     Figure~\ref{fig:chanVarious} shows its tropical varieties for all possible valuations on $\mathbb Q$.
     Regardless of the valuation, all tropical varieties share the same recession fan, as was proven by Gubler \cite{Gub13}. The latter is also necessarily the tropical variety under the trivial valuation.
     Note that for $p$ sufficiently large, the tropical varieties under $\nu_p$ coincides with
     the tropical variety under the trivial valuation. This is
     because $p$ is simply too large for $p-t$ to
     matter in any of our standard basis calculations.
     These $p$ are referred to as \emph{good primes} while other $p$ are referred to as \emph{bad primes} in the theory of modular techniques \cite{BDFLP2016}.
   \begin{figure}[h]
     \centering
     \begin{minipage}{0.6\linewidth}
       \centering
       \begin{tikzpicture}[scale=0.9]
         \draw (0,0) -- (2,0);
         \fill (0,0) circle (2pt);
         \fill (2,0) circle (2pt);
         \draw (0,0) -- node[sloped] (a) {$<$} (-2,1)
         (0,0) -- node[sloped] (b) {$<$} (-2,-1)
         (2,0) -- node[sloped] (c) {$>$} (4,1)
         (2,0) -- node[sloped] (d) {$>$} (4,-1);
         \node[anchor=south west,font=\scriptsize,xshift=-5,yshift=1em] (v1) at (0,0) {$(1,-1,1,-1)$};
         \node[anchor=north east,font=\scriptsize,xshift=5,yshift=-1em] (v2) at (2,0) {$\frac{1}{2}(-1,1,-1,1)$};
         \node[anchor=east,font=\scriptsize, xshift=-0.2cm] at (a) {$(-3,1,1,1)$};
         \node[anchor=east,font=\scriptsize, xshift=-0.2cm] at (b) {$(1,1,-3,1)$};
         \node[anchor=west,font=\scriptsize, xshift=0.2cm] at (c) {$(1,-3,1,1)$};
         \node[anchor=west,font=\scriptsize, xshift=0.2cm] at (d) {$(1,1,1,-3)$};
         \draw[->,shorten >= 0.5em,thin] (v1) -- (0,0);
         \draw[->,shorten >= 0.5em,thin] (v2) -- (2,0);
       \end{tikzpicture}\vspace{0.5em}\\
       $\Trop_{\hspace{-0.1cm}\nu_2}(I)$\vspace{2em}

       \begin{tikzpicture}[scale=0.9]
         \draw (0,0) -- (2,0);
         \fill (0,0) circle (2pt);
         \fill (2,0) circle (2pt);
         \draw (0,0) -- node[sloped] (a) {$<$} (-2,1)
         (0,0) -- node[sloped] (b) {$<$} (-2,-1)
         (2,0) -- node[sloped] (c) {$>$} (4,1)
         (2,0) -- node[sloped] (d) {$>$} (4,-1);
         \node[anchor=south west,font=\scriptsize,xshift=-5,yshift=1em] (v1) at (0,0) {$\frac{1}{2}(-1,-1,1,1)$};
         \node[anchor=north east,font=\scriptsize,xshift=5,yshift=-1em] (v2) at (2,0) {$\frac{1}{2}(1,1,-1,-1)$};
         \node[anchor=east,font=\scriptsize, xshift=-0.2cm] at (a) {$(-3,1,1,1)$};
         \node[anchor=east,font=\scriptsize, xshift=-0.2cm] at (b) {$(1,-3,1,1)$};
         \node[anchor=west,font=\scriptsize, xshift=0.2cm] at (c) {$(1,1,-3,1)$};
         \node[anchor=west,font=\scriptsize, xshift=0.2cm] at (d) {$(1,1,1,-3)$};
         \draw[->,shorten >= 0.5em,thin] (v1) -- (0,0);
         \draw[->,shorten >= 0.5em,thin] (v2) -- (2,0);
       \end{tikzpicture}\vspace{0.5em}\\
       $\Trop_{\hspace{-0.1cm}\nu_3}(I)$
     \end{minipage}%
     \begin{minipage}{0.375\linewidth}
       \centering
       \begin{tikzpicture}[scale=0.9]
         \fill (0,0) circle (2pt);
         \node[font=\scriptsize,yshift=-1cm] (v1) at (0,0) {$\frac{1}{4}(-1,-1,-1,3)$};
         \draw (0,0) -- node[sloped] (a) {$<$} (-2,1)
         (0,0) -- node[sloped] (b) {$<$} (-2,-1)
         (0,0) -- node[sloped] (c) {$>$} (2,1)
         (0,0) -- node[sloped] (d) {$>$} (2,-1);
         \node[anchor=east,font=\scriptsize, xshift=-0.2cm] at (a) {$(-3,1,1,1)$};
         \node[anchor=east,font=\scriptsize, xshift=-0.2cm] at (b) {$(1,-3,1,1)$};
         \node[anchor=west,font=\scriptsize, xshift=0.2cm] at (c) {$(1,1,-3,1)$};
         \node[anchor=west,font=\scriptsize, xshift=0.2cm] at (d) {$(1,1,1,-3)$};
         \draw[->,shorten >= 0.5em,thin] (v1) -- (0,0);
       \end{tikzpicture}\vspace{0.5em}\\
       $\Trop_{\hspace{-0.1cm}\nu_5}(I)$\vspace{2em}

       \begin{tikzpicture}[scale=0.9]
         \fill (0,0) circle (2pt);
         \node[font=\scriptsize,yshift=-1cm] (v1) at (0,0) {$(0,0,0,0)$};
         \draw (0,0) -- node[sloped] (a) {$<$} (-2,1)
         (0,0) -- node[sloped] (b) {$<$} (-2,-1)
         (0,0) -- node[sloped] (c) {$>$} (2,1)
         (0,0) -- node[sloped] (d) {$>$} (2,-1);
         \node[anchor=east,font=\scriptsize, xshift=-0.2cm] at (a) {$(-3,1,1,1)$};
         \node[anchor=east,font=\scriptsize, xshift=-0.2cm] at (b) {$(1,-3,1,1)$};
         \node[anchor=west,font=\scriptsize, xshift=0.2cm] at (c) {$(1,1,-3,1)$};
         \node[anchor=west,font=\scriptsize, xshift=0.2cm] at (d) {$(1,1,1,-3)$};
         \draw[->,shorten >= 0.5em,thin] (v1) -- (0,0);
       \end{tikzpicture}\vspace{0.5em}\\
       $\Trop_{\hspace{-0.1cm}\nu_p}(I)=\Trop(I)$ for $p>7$
     \end{minipage}\vspace{-0.25cm}%
     \caption{$\Trop_{\hspace{-0.1cm}\nu}(I)$ for various $p$-adic and the trivial valuations.}
     \label{fig:chanVarious}
   \end{figure}
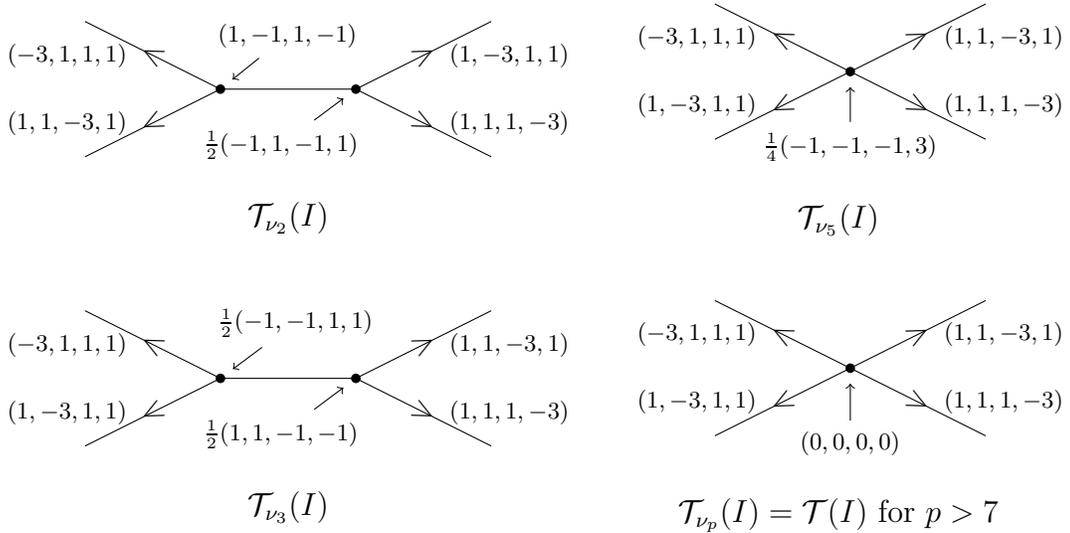
   \end{example}

   \begin{example}[independency of the valuation, \textsc{Singular} output]
     Consider the following ideal of Grassmann-Pl\"ucker relations for $\Grass(2,5)$,
     \begin{align*}
       I\;:=\;\langle& x_1x_5-x_0x_7-x_2x_4, x_1x_6-x_0x_8-x_3x_4, x_2x_6-x_0x_9-x_3x_5, \\
       & x_2x_8-x_1x_9-x_3x_7, x_5x_8-x_4x_9-x_6x_7 \rangle \unlhd \;\QQ[x_0,\ldots,x_9].
     \end{align*}
     Unlike Example~\ref{ex:valuationDependency}, its tropical variety does not seem to dependent on the choice of valuation, which is not surprising as Speyer and Sturmfels showed that it is characteristic-free \cite[Theorem 7.1]{SpeyerSturmfels04}. In this case, the computations under the $p$-adic valuation are mathematically equivalent to the computations under the trivial valuation, though the practical timings under the $p$-adic valuation are slightly slower due to a constant overhead of a more general framework.

     Figure~\ref{fig:plueckerOutput} shows a shortened output of
     \textsc{Singular} when computing its tropical variety with respect
     to the $2$-adic valuation. It describes a polyhedral fan whose
     intersection with the affine hyperplane $\{-1\}\times\RR^{10}$ yields again a polyhedral fan:
     The ray \#0 represents the 5-dimensional lineality space of $\Trop_{\hspace{-0.1cm}\nu_2}(I)$,
     while the maximal cones \texttt{\{0\;i\;j\}} represent polyhedral cones in $\Trop_{\hspace{-0.1cm}\nu_2}(I)$
     spanned by the lineality space and rays \#i, \#j.
     Note that, from a perspective of $\RR^n=\{-1\}\times\RR^n$, all data is given in homogenized coordinates,
     which is why the f-Vector shown is slightly distorted by lower-dimensional cones at infinity.
     \begin{figure}[p]
       \centering
       \begin{lstlisting}[language=bash,basicstyle=\ttfamily\scriptsize,columns=flexible,escapeinside={<@}{@>}]
         SINGULAR                                                       /
         A Computer Algebra System for Polynomial Computations         /   Version 4.1.0
                                                                     0<
         by: W. Decker, G.-M. Greuel, G. Pfister, H. Schoenemann       \   Dec 2016
         FB Mathematik der Universitaet, D-67653 Kaiserslautern         \
         > LIB "gfanlib.so";
         > printlevel = 1;
         > ring r=0,(a,b,c,d,e,f,g,h,i,j),dp;
         > ideal I = bf-ah-ce, bg-ai-de, cg-aj-df, ci-bj-dh, fi-ej-gh;
         > tropicalVariety(I,number(2));
         cones finished: 1   cones in working list: 4
         [...] <@\textcolor{red}{information on the state of the traversal because printlevel=1 was set}@>
         cones finished: 14   cones in working list: 1
         cones finished: 15   cones in working list: 0
       \end{lstlisting}\vspace{-0.5cm}
       \begin{lstlisting}[language=bash,basicstyle=\ttfamily\scriptsize,columns=flexible,multicols=2]
         _application PolyhedralFan
         _version 2.2
         _type PolyhedralFan

         AMBIENT_DIM
         11

         DIM
         8

         LINEALITY_DIM
         5

         RAYS
         -1 0 0 0 0 0 0 0 0 0 0   # 0
         0 -3 1 1 1 1 1 1 -1 -1 -1# 1
         0 -1 -1 1 1 -1 1 1 1 1 -3# 2
         0 -1 1 -1 1 1 -1 1 1 -3 1# 3
         0 -1 1 1 -1 1 1 -1 -3 1 1# 4
         0 1 -3 1 1 1 -1 -1 1 1 -1# 5
         0 1 -1 -1 1 1 1 -3 -1 1 1# 6
         0 1 -1 1 -1 1 -3 1 1 -1 1# 7
         0 1 1 -3 1 -1 1 -1 1 -1 1# 8
         0 1 1 -1 -1 -3 1 1 1 1 -1# 9
         0 1 1 1 -3 -1 -1 1 -1 1 1# 10

         LINEALITY_SPACE
         0 -1 0 0 0 0 0 0 1 1 1     # 0
         0 0 -1 0 0 0 1 1 0 0 1     # 1
         0 0 0 1 0 0 1 0 1 0 1      # 2
         0 0 0 0 1 0 0 1 0 1 1      # 3
         0 0 0 0 0 -1 -1 -1 -1 -1 -1# 4

         F_VECTOR
         1 11 25 15

         MAXIMAL_CONES
         {0 1 2}# Dimension 8
         {0 1 3}
         {0 1 4}
         {0 2 5}
         {0 2 9}
         {0 3 7}
         {0 4 6}
         {0 3 8}
         {0 4 10}
         {0 5 6}
         {0 5 7}
         {0 6 8}
         {0 7 10}
         {0 8 9}
         {0 9 10}
       \end{lstlisting}
       \caption{\textsc{Singular} output for the Grassmann-Pl\"ucker ideal}
       \label{fig:plueckerOutput}
     \end{figure}

     Figure \ref{fig:plueckerIllustration} illustrates the combinatorial
     structure of $\Delta$. Each vertex represents a ray of $\Delta$,
     while each edge represents a maximal cone of $\Delta$.
     The graph shown should be thought of as lying on a sphere $S^2$, on
     which the colored edges connect with their counterpart on the other
     side.
     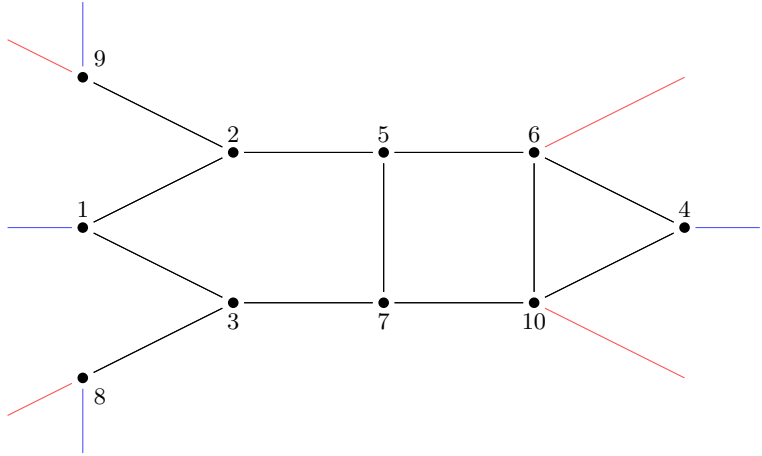
\begin{figure}[h]
       \centering
       \begin{tikzpicture}
         \node (n1) at (0,0) {};
         \node (n2) at (2,1) {};
         \node (n3) at (2,-1) {};
         \node (n5) at (4,1) {};
         \node (n7) at (4,-1) {};
         \node (n6) at (6,1) {};
         \node (n10) at (6,-1) {};
         \node (n4) at (8,0) {};
         \node (n8) at (0,-2) {};
         \node (n9) at (0,2) {};

         \fill (n1) circle (2pt);
         \fill (n2) circle (2pt);
         \fill (n3) circle (2pt);
         \fill (n4) circle (2pt);
         \fill (n5) circle (2pt);
         \fill (n6) circle (2pt);
         \fill (n7) circle (2pt);
         \fill (n8) circle (2pt);
         \fill (n9) circle (2pt);
         \fill (n10) circle (2pt);

         \draw (n1) -- (n2)
         (n1) -- (n3)
         (n2) -- (n5)
         (n2) -- (n9)
         (n3) -- (n7)
         (n3) -- (n8)
         (n5) -- (n6)
         (n5) -- (n7)
         (n7) -- (n10)
         (n6) -- (n10)
         (n6) -- (n4)
         (n10) -- (n4);
         \draw (n1) -- (n2)
         (n1) -- (n3)
         (n2) -- (n5)
         (n2) -- (n9)
         (n3) -- (n7)
         (n3) -- (n8)
         (n5) -- (n6)
         (n5) -- (n7)
         (n7) -- (n10)
         (n6) -- (n10)
         (n6) -- (n4)
         (n10) -- (n4);
         \draw [blue!70]
         (n1) -- (-1,0)
         (n4) -- (9,0)
         (n9) -- ($(n9)+(0,1)$)
         (n8) -- ($(n8)+(0,-1)$);
         \draw [red!70]
         (n6) -- ($(n6)+(2,1)$)
         (n8) -- ($(n8)+(-1,-0.5)$)
         (n10) -- ($(n10)+(2,-1)$)
         (n9) -- ($(n9)+(-1,0.5)$);

         \node [anchor=south, font=\scriptsize] at (n1) {$1$};
         \node [anchor=north, font=\scriptsize] at (n3) {$3$};
         \node [anchor=north, font=\scriptsize] at (n7) {$7$};
         \node [anchor=north, font=\scriptsize] at (n10) {$10$};
         \node [anchor=south, font=\scriptsize] at (n2) {$2$};
         \node [anchor=south, font=\scriptsize] at (n5) {$5$};
         \node [anchor=south, font=\scriptsize] at (n6) {$6$};
         \node [anchor=south, font=\scriptsize] at (n4) {$4$};
         \node [anchor=south west, font=\scriptsize] at (n9) {$9$};
         \node [anchor=north west, font=\scriptsize] at (n8) {$8$};
       \end{tikzpicture}\vspace{-0.5cm}%
       \caption{tropical variety of the Grassmann-Pl\"ucker ideal}
       \label{fig:plueckerIllustration}
     \end{figure}
   \end{example}

   \section{Optimizations for non-trivial valuations}\label{sec:optimization}

   Up till now, all algorithms for computing $\Trop_{\hspace{-0.1cm}\nu}(I)$ via $\Trop(\pi^{-1}I)$
   appear to be strictly worse than computing $\Trop(I)$, as
   we are working with an inhomogeneous ideal $\pi^{-1}I$ over a coefficient ring $R$
   instead of a homogeneous ideal $I$ over a coefficient field $K$.
   In this section, however, we consider simple optimizations for the traversal,
   which suggest that working under a nontrivial valuation need not necessarily be slower than working under a trivial valuation.

   The main algebraic bottlenecks in the computation of tropical varieties are:
   \begin{enumerate}
   \item computing generic initial ideals, Algorithm~\ref{alg:genericInitialIdeal} ,
   \item checking generic initial ideals for monomials in Algorithm~\ref{alg:tropCurve},
   \item the flip of standard bases, Algorithm~\ref{alg:flip},
   \end{enumerate}
   all of which require at least one standard basis computation, which
   is the reason for the bottleneck. However, from
   Algorithm~\ref{alg:tropVariety}, they are never called on the
   actual input ideal, they are exclusively called on its initial
   ideals instead. This can be exploited, should the input ideal of
   Algorithm~\ref{alg:tropVariety} be of the form $\pi^{-1}I$ for some
   $I\unlhd K[x]$. Lemma~\ref{lem:StantardBases} then shows that many
   computations can actally be done over the residue field $\mathfrak{K}$.

   \begin{convention}
     Let $I\unlhd K[x]$ be a homogeneous ideal and fix an initial ideal $J:=\initial_{(-1,w)}(\pi^{-1}I)\unlhd R[t,x]$ of its preimage as well as the corresponding monomial ordering $>_{(-1,w)}$. Note that necessarily $p\in J$.
   \end{convention}

   \begin{lemma}[quasi-homogeneity of $J$]\label{lem:optOrdering}
     There exists a positive weight vector $u\in(\RR_{>0})^{n+1}$ such that $J$ is weighted homogeneous with respect to it.
   \end{lemma}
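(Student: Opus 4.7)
The plan is to exhibit two compatible gradings of $J$ and combine them into a strictly positive one.

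First, I would check that $J$ is $x$-homogeneous in the usual sense. Since $I \unlhd K[x]$ is homogeneous, it admits $x$-homogeneous generators, which after clearing denominators and lifting coefficients through the surjection $R\llbracket t\rrbracket \twoheadrightarrow \mathcal O_K$ yield $x$-homogeneous preimages in $\Rtx$. Together with $p-t$, which is $x$-homogeneous of degree $0$, these generate $\pi^{-1}I$, so $\pi^{-1}I$ is $x$-graded. Because the initial form $\initial_{(-1,w)}$ of an $x$-homogeneous polynomial is itself $x$-homogeneous of the same degree (the operator only selects terms of maximal $(-1,w)$-weight, which all share the $x$-degree of the polynomial), $J$ is generated by $x$-homogeneous elements, i.e., weighted homogeneous with respect to $u' := (0,1,\ldots,1) \in \RR^{n+1}$.

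Second, by the very definition of the initial form, every generator of $J$ of the shape $\initial_{(-1,w)}(f)$ collects the terms of maximal $(-1,w)$-weight in $f$ and is therefore $(-1,w)$-weighted homogeneous. Hence $J$ is weighted homogeneous also with respect to $u'' := (-1,w_1,\ldots,w_n) \in \RR^{n+1}$.

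Third, an ideal that is weighted homogeneous with respect to two weight vectors is weighted homogeneous with respect to every real linear combination of them, since each homogeneous generator retains its homogeneity under the combined grading (with the corresponding combined degree). It thus suffices to produce $\alpha, \beta \in \RR$ such that $u := \alpha u' + \beta u''$ lies in $(\RR_{>0})^{n+1}$. Choosing $\beta = -1$ gives $u = (1,\alpha - w_1, \ldots, \alpha - w_n)$, which is strictly positive whenever $\alpha > \max_i w_i$.

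No genuine obstacle arises: the entire argument reduces to the two bookkeeping observations above followed by an elementary linear combination. The only point requiring some care is verifying that $\pi^{-1}I$ (and therefore $J$) can be generated by $x$-homogeneous elements, which rests on the homogeneity of $I$ together with the $x$-homogeneity of $p-t$.
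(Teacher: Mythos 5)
Your proof is correct and uses the same idea as the paper: combine the $(-1,w)$-grading that $J$ carries as an initial ideal with its $x$-grading (i.e.\ the $(0,1,\ldots,1)$-grading) and take a linear combination landing in the positive orthant. You are in fact a bit more careful with signs than the paper's one-line argument, which writes $k\cdot(0,1,\ldots,1)+w$ for $w\in\RR_{<0}\times\RR^n$ (leaving the $t$-coordinate negative unless one implicitly replaces $w$ by $-w$); your choice $\beta=-1$ makes the sign flip explicit.
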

   \begin{proof}
     Because $J$ is weighted homogeneous with respect to $w\in\RR_{<0}\times\RR^n$ and $x$-homogeneous, it is, picking $k\in\NN$ sufficiently high, also weighted homogeneous with respect to
     $k\cdot(0,1,\ldots,1)+w\in (\RR_{>0})^{n+1}$.
   \end{proof}

   \begin{definition}
     We call an element $g=\sum_{\beta,\alpha} c_{\beta,\alpha}\cdot t^\beta x^\alpha \in R[t,x]$ a \emph{canonical representative} of its residue class $\overline{g}\in \mathfrak K[t,x]$, if
     \[ c_{\beta,\alpha}=0\;\; \Longleftrightarrow\;\; \overline{c}_{\beta,\alpha}=\overline 0\qquad \text{and} \qquad
        c_{\beta,\alpha}=1\;\; \Longleftrightarrow\;\; \overline{c}_{\beta,\alpha}=\overline 1. \]
   \end{definition}

   \begin{lemma}[standard bases of $J$]\label{lem:StantardBases}
     Let $\{\overline g_1,\ldots,\overline g_k\}$ be a monic standard basis of $\overline J$ with respect to $>_{(-1,w)}$. Then $\{g_0,g_1,\ldots,g_k\}$ is a standard basis of $J$ with respect to $>_{(-1,w)}$, where $g_0=p$ and $g_1,\ldots,g_k$ are canonical representatives of their residue classes.
   \end{lemma}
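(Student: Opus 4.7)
The proof will verify the two defining conditions of a standard basis: first, that every $g_i$ lies in $J$, and second, that the leading terms $\lt_{>_{(-1,w)}}(g_i)$ generate the leading ideal $\lt_{>_{(-1,w)}}(J)$. For membership, $g_0 = p$ lies in $J$ because $p - t \in \pi^{-1}I$ (since $\pi(p - t) = p - p = 0$) and $\initial_{(-1,w)}(p-t) = p$, the term $t$ being strictly dominated in weight by the constant $p$. For $i \geq 1$, since $\bar g_i \in \bar J$, I pick any lift $h_i \in J$ with $\bar h_i = \bar g_i$; then $g_i - h_i$ lies in the kernel of the reduction $R[t,x] \twoheadrightarrow \mathfrak K[t,x]$, which is contained in $\langle p \rangle \subseteq J$, so $g_i \in J$ as well.

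\textbf{Main step.} For the leading ideal condition, I take an arbitrary $f \in J$ with $\lt_{>_{(-1,w)}}(f) = c \cdot t^\beta x^\alpha$, $c \in R \setminus \{0\}$, and distinguish two cases according to the image of $c$ in $\mathfrak K$. If $\bar c = 0$, i.e., $c \in pR$, then writing $c = p \cdot c'$ gives $\lt(f) = c' \cdot t^\beta x^\alpha \cdot \lt(g_0) \in \langle \lt(g_0)\rangle$. If $\bar c \neq 0$, then no term of $f$ exceeds $t^\beta x^\alpha$ in $>_{(-1,w)}$, so the same holds for the reduction $\bar f$, and $\lt(\bar f) = \bar c \cdot t^\beta x^\alpha$ in $\bar J$. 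The standard basis assumption on $\{\bar g_1, \ldots, \bar g_k\}$ then yields an index $i$ with $\lm(\bar g_i) \mid t^\beta x^\alpha$; since $\bar g_i$ is monic and $g_i$ is a canonical representative, $\lt(g_i) = \lm(\bar g_i)$ as an element of $R[t,x]$, and therefore $\lt(g_i)$ divides $\lt(f)$.

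\textbf{Main obstacle.} The only delicate point is the clean identification of the kernel of the reduction $R \to \mathfrak K$ with $pR$, so that ``$\bar c = 0$ in $\mathfrak K$'' is equivalent to ``$c \in pR$ in $R$''. This equivalence is what makes the notion of canonical representative well-defined and follows from the density of $R$ in $\mathcal O_K$ guaranteed by Cohen's structure theorem (cf.~Convention~\ref{con:coefficients}). Once this identification is in place, the entire argument reduces to the routine case split above, and, crucially, no additional standard basis computation in $R[t,x]$ is required beyond the one already performed in $\mathfrak K[t,x]$ -- which is precisely the computational saving that the surrounding discussion is advocating.
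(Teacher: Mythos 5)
Your proof is correct and follows essentially the same argument as the paper: split on whether $\bar c=0$ or $\bar c\neq 0$ in $\mathfrak K$, use $g_0=p$ in the first case, and in the second pass to $\bar J$, invoke the standard basis property of $\{\bar g_i\}$, and lift back via monicity plus the canonical-representative condition. You are somewhat more careful in two places the paper glosses over -- you explicitly verify $g_i\in J$ (the paper just asserts $\langle G\rangle\subseteq J$ is ``clear''), and you flag the identification $\ker(R\to\mathfrak K)=pR$, which is precisely the implicit assumption behind the paper's phrasing ``$p\mid c$'' versus ``$p\nmid c$'' and its normalization step.
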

   \begin{proof}
     Let $G=\{g_0,\ldots,g_k\}$. Since $p\in J$, it is clear that $\langle G \rangle\subseteq J$ and therefore
     $\langle \lt_>(g) \mid g \in G\rangle\subseteq\lt_>(J)$.
     For the converse, consider a term $s=c\cdot t^\beta x^\alpha\in\lt_>(J)$.
     Now if $p \mid c$, then $s\in\langle \lt_>(g) \mid g \in G \rangle$,
     since $p\in G$ and $\lt_>(p)=p$.
     And if $p\nmid c$, we may use $p\in\lt_>(J)$ to normalize $s$, and get $t^\beta
     x^\alpha\in\lt_>(J)$. Thus $t^\beta x^\alpha\in\lt_>(\ov{J})$, and
     hence there is a $\overline{g_i}$ such that $\lm_>(\overline
     g_i)\mid t^\beta x^\alpha$. Since all $\ov g_i$ were chosen to be
     monic, this implies $\lt_>(\ov g_i)\mid t^\beta x^\alpha$, and
     because all $g_i$ were chosen to be canonical representatives, this
     implies $\lt_>(g_i)\mid s$.
   \end{proof}

   This article was dedicated to show how $\Trop_{\hspace{-0.1cm}\nu}(I)$ can be computed via $\Trop(\pi^{-1}I)$,
   however until now we have not addressed how to compute the preimage $\pi^{-1}I$ in the first place.
   We will therefore end the article with two results: The first will show that $\pi^{-1}I$ can be obtained by a saturation.
   The second will allow us get around computing the saturation.

   \begin{lemma}\label{lem:preimage}
     Let $I \unlhd K[x]$ be an ideal, and let
     $G=\{g_1,\ldots,g_k\}\subseteq I\cap \mathcal O_K[x]$ be a generating set
     over the valuation ring. Since $\pi:\Rtx\rightarrow \mathcal O_K[x]$ is
     surjective, there exist $g_1',\ldots,g_k'\in\Rtx$ such that
     $\pi(g_i')=g_i\in R[x]$. Then
     \begin{displaymath}
       \pi^{-1}I = \Big(\langle g_1',\ldots,g_k' \rangle + \langle p-t \rangle \Big): p^\infty  \unlhd \Rtx.
     \end{displaymath}
   \end{lemma}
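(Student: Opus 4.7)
The plan is to prove the two inclusions separately, using only three ingredients: (i) the surjectivity of $\pi\colon \Rtx \to \mathcal{O}_K[x]$ with $\ker(\pi) = \langle p-t\rangle$, (ii) the fact that $p$ is a unit in $K$ but not in $\mathcal{O}_K$, and (iii) the hypothesis that $g_1,\ldots,g_k$ generate $I$ in a manner that lets us clear denominators into $\mathcal{O}_K[x]$.

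For the easy inclusion $\supseteq$, I would take $f \in (\langle g_1',\ldots,g_k'\rangle + \langle p-t\rangle) : p^\infty$. By definition there exists some $N \in \NN$ with
\[ p^N f = \sum_{i=1}^k a_i'\cdot g_i' + b\cdot (p-t) \]
for suitable $a_i', b \in \Rtx$. Applying $\pi$ and using $\pi(p-t)=0$ gives $p^N\cdot \pi(f) = \sum_i \pi(a_i')\cdot g_i \in I$. Since $p$ is invertible in $K$, I may divide by $p^N$ inside $K[x]$ to conclude $\pi(f) \in I$, i.e. $f \in \pi^{-1}I$.

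For the harder inclusion $\subseteq$, the key step is to clear denominators. Given $f \in \pi^{-1}I$, write $\pi(f) = \sum_i c_i\cdot g_i$ with $c_i \in K[x]$, and choose $N$ large enough that $p^N c_i \in \mathcal{O}_K[x]$ for every $i$. Then $p^N\cdot \pi(f) = \sum_i (p^N c_i)\cdot g_i$ lies in $\mathcal{O}_K[x]$. Using surjectivity of $\pi$, lift each $p^N c_i$ to some $a_i' \in \Rtx$. The element $h := p^N f - \sum_i a_i'\cdot g_i' \in \Rtx$ then satisfies $\pi(h) = 0$, so $h \in \ker(\pi) = \langle p-t\rangle$. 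Rearranging,
\[ p^N f \in \langle g_1',\ldots,g_k'\rangle + \langle p-t\rangle, \]
which is exactly the statement that $f$ lies in the $p$-saturation.

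The only conceptual obstacle is to clear denominators correctly: we must pass from an expression of $\pi(f)$ as a $K[x]$-combination of the $g_i$ to one whose coefficients lie in $\mathcal{O}_K[x]$, in order to lift them through $\pi$. This is immediate since $K$ is the localization of $\mathcal{O}_K$ at $p$, so multiplication by a suitable power of $p$ suffices. Everything else is bookkeeping around the short exact sequence $0 \to \langle p-t\rangle \to \Rtx \xrightarrow{\pi} \mathcal{O}_K[x] \to 0$.
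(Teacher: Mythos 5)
Your proof is correct and follows essentially the same route as the paper: the $\supseteq$ inclusion is immediate from $\pi(p-t)=0$ and invertibility of $p$ in $K$, and the $\subseteq$ inclusion proceeds by clearing denominators so that the $K[x]$-coefficients land in $\mathcal O_K[x]$, lifting them through the surjection $\pi$, and using $\ker(\pi)=\langle p-t\rangle$. The only cosmetic difference is that you spell out the $\supseteq$ direction, which the paper dismisses as obvious.
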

   \begin{proof}
     $\pi^{-1}I \supseteq (\langle g_1',\ldots,g_k' \rangle + \langle p-t
     \rangle) : p^\infty $ is obvious, as $p-t$ is mapped to $0$ and $p$
     is invertible in $K$.

     For the converse inclusion, let $f\in\pi^{-1}I$. Then there are $q_1,\ldots,q_k\in K[x]$ such that
     \begin{displaymath}
       \pi(f)=q_1\cdot g_1 + \ldots + q_k\cdot g_k \in K[x],
     \end{displaymath}
     which means that for a sufficiently high power $l\in \NN$ we have
     \begin{displaymath}
       p^l\cdot \pi(f)=\underbrace{p^lq_1}_{\in \mathcal O_K[x]}\cdot g_1 + \ldots + \underbrace{p^lq_k}_{\in \mathcal O_K[x]}\cdot g_k \in \mathcal O_K[x].
     \end{displaymath}
     Since the map $\pi:\Rtx\rightarrow \mathcal O_K[x]$ is surjective, there exist $q_1',\ldots,q_k'\in \Rtx$ such that
     \begin{displaymath}
       p^l\cdot \pi(f)=\pi(q_1' \cdot g_1' + \ldots + q_k' \cdot g_k'),
     \end{displaymath}
     or rather
     \begin{displaymath}
       p^l\cdot f - q_1' \cdot g_1' + \ldots + q_k' \cdot g_k'\in \ker(\pi)=\langle p-t\rangle.
     \end{displaymath}
     Thus $p^l\cdot f \in \langle g_1',\ldots,g_k' \rangle + \langle p-t \rangle$, and hence
     \begin{displaymath}
       f\in (\langle g_1',\ldots,g_k' \rangle + \langle p-t \rangle): p^\infty. \qedhere
     \end{displaymath}
   \end{proof}

   \begin{proposition}\label{prop:tropPreimage}
     Let $I \unlhd K[x]$ be an ideal, and let
     $G=\{g_1',\ldots,g_k'\}\subseteq \pi^{-1}I$ such that $I=\langle
     \pi(g_1'),\ldots,\pi(g_k')\rangle$. Then
     \begin{displaymath}
       \Trop(\pi^{-1}I) = \Trop(\langle g_1',\ldots,g_k'\rangle + \langle p-t \rangle).
     \end{displaymath}
   \end{proposition}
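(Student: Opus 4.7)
The plan is to reduce the equality of tropical varieties to a simple monomial-transport argument, leveraging Lemma~\ref{lem:preimage}. Write $J := \langle g_1',\ldots,g_k' \rangle + \langle p-t \rangle$ for brevity. Since $J \subseteq \pi^{-1}I = J:p^\infty$, we immediately have $\initial_w(J) \subseteq \initial_w(\pi^{-1}I)$ for every $w \in \RR_{<0}\times\RR^n$, so monomial-freeness of $\initial_w(\pi^{-1}I)$ transfers to $\initial_w(J)$; taking closures this gives $\Trop(\pi^{-1}I) \subseteq \Trop(J)$.

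For the opposite inclusion I would first prove the following key lemma: \emph{for every $f \in \pi^{-1}I$ there exists $l \in \NN$ with $t^l \cdot f \in J$.} Indeed, by Lemma~\ref{lem:preimage} we have $p^l f \in J$ for some $l$, and since $p-t \in J$ the factorization
\[ p^l - t^l \;=\; (p-t)\bigl(p^{l-1} + p^{l-2} t + \cdots + t^{l-1}\bigr) \]
gives $(p^l - t^l) f \in J$, hence $t^l f = p^l f - (p^l - t^l) f \in J$. The crucial point is that we have replaced the saturation at $p$ by multiplication by a power of $t$, which behaves well under $\initial_w$ since $t$ is a single term of weight $w_t$: one has $\initial_w(t^l f) = t^l \initial_w(f)$.

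Now suppose $\initial_w(\pi^{-1}I)$ contains a monomial $t^\beta x^\alpha$ for some $w \in \RR_{<0}\times\RR^n$. Write $t^\beta x^\alpha = \sum_i h_i \initial_w(f_i)$ with $h_i \in R[t,x]$ and $f_i \in \pi^{-1}I$, and for each $i$ choose $l_i \in \NN$ with $t^{l_i} f_i \in J$; set $L := \max_i l_i$. Then $t^{l_i}\initial_w(f_i) = \initial_w(t^{l_i} f_i) \in \initial_w(J)$, and
\[ t^{L+\beta} x^\alpha \;=\; t^L \cdot t^\beta x^\alpha \;=\; \sum_i h_i\, t^{L - l_i} \cdot \bigl(t^{l_i} \initial_w(f_i)\bigr) \;\in\; \initial_w(J), \]
exhibiting a monomial in $\initial_w(J)$. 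This proves the contrapositive of $\Trop(J) \subseteq \Trop(\pi^{-1}I)$ on the open halfspace $\RR_{<0}\times\RR^n$, and hence on the closures. The only real obstacle is the conceptual step of upgrading the saturation $p^l f \in J$ to $t^l f \in J$; multiplying by $p^l$ would kill the monomial modulo $p$, whereas multiplying by the monomial $t^l$ keeps the result a monomial in $\initial_w(J)$.
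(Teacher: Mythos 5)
Your proof is correct and follows the paper's approach: both reduce to Lemma~\ref{lem:preimage}, then use the factorization $p^l - t^l = (p-t)(p^{l-1}+\cdots+t^{l-1}) \in \langle p-t\rangle$ to upgrade $p^l f \in J$ to $t^l f \in J$, and conclude by multiplying the monomial by a suitable power of $t$, which is harmless for $\initial_w$. The one minor variation is that the paper invokes the witness algorithm (Algorithm~\ref{alg:witness}) to obtain a single $f\in\pi^{-1}I$ with $\initial_w(f)=t^\beta x^\alpha$, whereas you work with a general decomposition $t^\beta x^\alpha = \sum_i h_i \initial_w(f_i)$ and take $L=\max_i l_i$; both are valid, and your version avoids the dependency on the witness lemma at the modest cost of tracking the bound $L$.
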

   \begin{proof}
     By Lemma \ref{lem:preimage}, we have
     \begin{displaymath}
       \pi^{-1}I = \Big(\underbrace{\langle g_1',\ldots,g_k' \rangle + \langle p-t \rangle}_{=:I'} \Big): p^\infty  \unlhd \Rtx.
     \end{displaymath}
     Consider a weight vector $w\in\RR_{<0}\times\RR^n$ and suppose
     $\initial_w(I')$ contains a monomial $t^\beta x^\alpha$. By
     Algorithm \ref{alg:witness}, there exists a witness $f\in I'$ with
     $\initial_w(f)=t^\beta x^\alpha$. However since
     $I'\subseteq\pi^{-1}I$, $\initial_w(\pi^{-1}I)$ then contains the monomial
     $t^\beta x^\alpha$ as well.

     Now suppose $\initial_w(\pi^{-1}I)$ contains a monomial $t^\beta
     x^\alpha$. By Algorithm \ref{alg:witness}, there exists a witness
     $f\in\pi^{-1}I$ with $\initial_w(f)=t^\beta x^\alpha$. Let $l\in\NN$
     be sufficiently high such that $p^l\cdot f\in I'$. Now since $p-t\in
     I'$, this implies $t^l\cdot f\in I'$ and $\initial_w(I')$ then
     contains the monomial $\initial_w(t^l\cdot f)=t^{\beta+l}
     x^\alpha$.
   \end{proof}

   \lang
   {
     \section{Concluding words}

     In Section 4.1 we have seen that computing the tropical variety
     $\Trop(I)$ consists of two distinct parts: We first compute a starting
     cone to begin with and then traverse the entire tropical variety until
     all Gr\"obner cones in it are known.

     Traversing the tropical variety consists of repeated computations of
     tropical stars using Algorithm \ref{alg:tropNeighbours} and repeated
     computations of flips of standard bases using Algorithm
     \ref{alg:flip}.

     Since computing the tropical star boils down to repeated search for
     monomials in initial ideals with respect to generic weights, which
     mainly consists of a single standard basis computation of $J$ with
     respect to a multiweighted ordering, and computation of witnesses
     thereof, it can be entirely done over the residue field thanks to
     Algorithm \ref{alg:optMonomial} and Algorithm~\ref{alg:optWitness}.

     And because computing the flip boils down to computing a standard
     basis of an initial ideal and lifting it to the original ideal, it can
     also be done over the residue field thanks to Algorithm
     \ref{alg:optSB} and Algorithm~\ref{alg:optWitness}.

     \begin{figure}[h]
       \centering
       \begin{tikzpicture}
         \node [anchor=south west] at (0,0) {tropical traversal};
         \draw [fill=blue!20, rounded corners] (0,0) -- (12,0) -- (12,-8) -- (0,-8) -- cycle;

         \node [anchor=south west] at (0.25,-0.75) {tropical star};
         \draw [fill=blue!30, rounded corners] (0.25,-0.75) -- (5.5,-0.75) -- (5.5,-7.75) -- (0.25,-7.75) -- cycle;
         \node [anchor=west] at (0.35,-1.25) {$\bullet$ checking initial ideals};
         \node [anchor=west] at (0.7,-1.7) {for monomials};
         \node [anchor=west] at (0.35,-4.5) {$\bullet$ computing witnesses};
         \node [red] at (2.75,-2.75) {over residue field with};
         \node [red] at (2.75,-3.25) {Algorithm \ref{alg:optMonomial}};
         \node [red] at (2.75,-5.5) {over residue field with};
         \node [red] at (2.75,-6) {Algorithm \ref{alg:optWitness}};

         \node [anchor=south west] at (6.5,-0.75) {flips of standard bases};
         \draw [fill=blue!30, rounded corners] (6.5,-0.75) -- (11.75,-0.75) -- (11.75,-7.75) -- (6.5,-7.75) -- cycle;
         \node [anchor=west] at (6.6,-1.25) {$\bullet$ compute standard bases};
         \node [anchor=west] at (6.95,-1.7) {of initial ideals};
         \node [anchor=west] at (6.6,-4.5) {$\bullet$ lift standard bases};
         \node [red] at (9,-2.75) {over residue field with};
         \node [red] at (9,-3.25) {Algorithm \ref{alg:optSB}};
         \node [red] at (9,-5.5) {over residue field with};
         \node [red] at (9,-6) {Algorithm \ref{alg:optWitness}};
       \end{tikzpicture}\vspace{-0.5cm}%
       \caption{algebraic computations during the traversal}
       \label{fig:end1}
     \end{figure}

     Computing the starting cone using Algorithm \ref{alg:tropStartingCone}
     consists of recursive searches for points in the tropical variety and
     lifts of Gr\"obner bases. The latter is something that can always be
     done over the residue field by Algorithm \ref{alg:optWitness}.

     The first, as explained in Remark \ref{rem:findingTropicalPoints}, is
     essentially computing the Gr\"obner fan until we encounter a point in
     the tropical variety. Since we are working with initial ideals
     containing $p$ in the recursions, all computations can be done over
     the residue field. For the beginning of the recursions however, we do
     need a standard basis computation of $I$ to start our Gr\"obner fan
     traversal.

     \begin{figure}[h]
       \centering
       \begin{tikzpicture}
         \node [anchor=south west] at (0,0) {tropical starting cone computation};
         \draw [fill=blue!20, rounded corners] (0,0) -- (12,0) -- (12,-8) -- (0,-8) -- cycle;

         \node [anchor=south west] at (0.25,-0.75) {start of recursions};
         \draw [fill=blue!30, rounded corners] (0.25,-0.75) -- (11.75,-0.75) -- (11.75,-4.5) -- (0.25,-4.5) -- cycle;
         \node [anchor=west] at (0.35,-1.2) {$\bullet$ searching for a point in $\Trop(I)\setminus C_0(I)$};
         \node [anchor=west] at (0.7,-1.7) {- \bf compute a random maximal Gr\"obner cone};
         \node [anchor=west] at (0.7,-2.2) {- traverse Gr\"obner fan until tropical point is found};
         \node [anchor=west, red] at (0.95,-2.7) {computations over residue field similar to tropical traversal};
         \node [anchor=west] at (0.35,-3.5) {$\bullet$ lift standard bases};
         \node [anchor=west, red] at (0.7,-4) {computations over residue field with Algorithm \ref{alg:optWitness}};

         \node [anchor=south west] at (0.25,-5.5) {later recursions};
         \draw [fill=blue!30, rounded corners] (0.25,-5.5) -- (11.75,-5.5) -- (11.75,-7.75) -- (0.25,-7.75) -- cycle;
         \node [anchor=west] at (0.35,-6) {$\bullet$ [...]};
         \node [red] at (6,-6.825) {initial ideals, computations over residue field};

       \end{tikzpicture}\vspace{-0.5cm}%
       \caption{algebraic computations during the starting cone computation}
       \label{fig:end1}
     \end{figure}

     We see that, apart from the reduction process which is necessary for
     determining the inequalities and equations of the Gr\"obner cones, the
     only computation in $\Rtx$ required in the entire algorithm is a
     single standard basis computation at the very beginning. The remaining
     computations such as divisions with remainder, normal form
     computations or initially reduced standard basis computations, all
     happen over the residue field (and with respect to well-orderings
     thanks to Lemma \ref{lem:optOrdering}).

     However, standard basis computations over coefficient rings, in
     \textsc{Singular} as well as in other computer algebra systems, are
     still in a highly experimental state. In fact, all examples up till
     now, which have fail to be computed in reasonable time, have been
     because of it. And examples which managed to pass the initial standard
     basis computation all successfully finished in reasonable time.

     But this is not surprising. As if finding the right balance between
     the plethora of strategies for standard basis computations over fields
     is not a hard problem already, the difficulty of having leading
     coefficients in a ring adds a whole new dimension to it.

     Nevertheless, our ideals $\pi^{-1}I$ in $\Rtx$ always contain a
     generator $p-t$, which has leading term $p$. And because $R/\langle p
     \rangle = \mathfrak K$ is a field, one would expect that a strategy,
     which aggressively uses this element to normalize every other leading
     coefficient during our computation to $1$, should make the standard
     basis computation almost as easy as over a ground field. In the
     context of this work, this is certainly something that needs to be
     investigated next.

     \bigskip
     Finally, I want to mention two other interesting topics that are worth
     pursuing, though they are not only interesting for this approach of
     computing tropical varieties over valued fields, but interesting for
     computing tropical varieties over rings in general.

     The first is Andrew Chan's work on computing tropical curves via
     coordinate projections as in Chapter $4$ of \cite{Chan13}. If this
     method can be adjusted to work over rings, it might prove very useful
     for computing tropical varieties with one-codimensional homogeneity
     spaces. Tests have shown that it is significantly faster than the
     technique shown in Algorithm \ref{alg:tropCurve}.
     The method works by computing several projections of the tropical
     curve onto coordinate hypersurfaces and using them to reconstruct the
     original tropical curve.

     The second is a method by Anders Jensen for finding points in the
     tropical variety, which is necessary for computing a starting cone,
     see Algorithm~\ref{alg:tropStartingCone}. As of today, there exists no
     citeable source, but the rough idea is to compute stable intersections
     with hyperplanes to reduce the dimension of our problem to a certain
     degree. Any point in the stable intersection naturally lies in our
     original tropical variety to begin with. Computing stable
     intersections is something that is made possible by the recent work of
     Anders Jensen and Josephine Yu \cite{JensenYu13}.

     However it is neither clear whether computing stable intersections
     over a ring may be done in a similar fashion theoretically, nor
     whether it is feasible practically. Computing the stable intersections
     over fields generally requires transcendental extensions and, as of
     today, none of the big computer algebra systems (\texttt{Singular},
     \texttt{Macaulay2}, \texttt{Magma}) supports transcendental extensions
     over rings.
   }


  \renewcommand{\emph}[1]{\textit{#1}}

\newcommand{\etalchar}[1]{$^{#1}$}
\providecommand{\bysame}{\leavevmode\hbox to3em{\hrulefill}\thinspace}

\end{document}